\documentclass[a4paper,12pt,reqno]{amsart}
\usepackage{graphicx}
\usepackage{caption}
\usepackage{subcaption}

\usepackage{color}
\definecolor{tk}{rgb}{0.7,0.1,0.2}
\definecolor{jw}{rgb}{0.18, 0.47, 0.92}

\usepackage{amsmath,amstext,amssymb,amsopn,amsthm}
\usepackage{url}

\usepackage[margin=30mm]{geometry}
\usepackage{eucal,mathrsfs,dsfont}
\usepackage{soul}
\usepackage{color}
\usepackage{hyperref}

\newtheorem{theorem}{Theorem}[section]
\newtheorem{lemma}[theorem]{Lemma}
\newtheorem{proposition}[theorem]{Proposition}

\newtheorem{corollary}[theorem]{Corollary}

\newtheorem{conjecture}[theorem]{Conjecture}
\newtheorem{problem}{Problem}

\newcommand{\R}{\mathbb{R}}
\newcommand{\N}{\mathbb{N}}
\newcommand{\calA}{\mathcal{A}}
\newcommand{\calG}{\mathcal{G}}
\newcommand{\eps}{\varepsilon}

\DeclareMathOperator{\dist}{\operatorname{dist}}

\title[On the interior Bernoulli problem for the fractional Laplacian]{On the interior Bernoulli free boundary problem for the fractional Laplacian on an interval}

\author[T. Kulczycki]{Tadeusz Kulczycki}
\author[J. Wszoła]{Jacek Wszoła}

\thanks{T. Kulczycki was supported by the National Science Centre, Poland, grant no. 2019/33/B/ST1/02494.}
\address{Faculty of Pure and Applied Mathematics, Wroc{\l}aw University of Science and Technology, Wyb. Wyspia{\'n}skiego 27, 50-370 Wroc{\l}aw, Poland.}

\email{tadeusz.kulczycki@pwr.edu.pl}
\email{ 255718@student.pwr.edu.pl}

\pagestyle{headings}

\begin{document}

\begin{abstract}
We study the structure of solutions of the interior Bernoulli free boundary problem for $(-\Delta)^{\alpha/2}$ on an interval $D$ with parameter $\lambda > 0$. In particular, we show that there exists a constant $\lambda_{\alpha,D} > 0$ (called the Bernoulli constant) such that the problem has no solution for $\lambda \in (0,\lambda_{\alpha,D})$, at least one solution for $\lambda = \lambda_{\alpha,D}$ and at least two solutions for $\lambda > \lambda_{\alpha,D}$. We also study the interior Bernoulli problem for the fractional Laplacian for an interval with one free boundary point. We discuss the connection of the Bernoulli problem with the corresponding variational problem and present some conjectures. In particular, we show for $\alpha = 1$ that there exists solutions of the interior Bernoulli free boundary problem for $(-\Delta)^{\alpha/2}$ on an interval which are not minimizers of the corresponding variational problem.
\end{abstract}

\maketitle

\section{Introduction}

The interior Bernoulli free boundary problem for the fractional Laplacian is formulated as follows. 
\begin{problem}
\label{bernoulli_problem}
Given $\alpha \in (0,2)$, $d \in \N$, a bounded domain $D \subset \R^d$ and a constant $\lambda > 0$ we look for a continuous function $u: \R^d \to [0,1]$ and a domain $K \subset D$ of class $C^1$ satisfying
\begin{equation*}
\left\{
\begin{aligned}
(-\Delta)^{\alpha/2}u(x)&=0 &&\text{for $x \in D \setminus \overline{K}$,}\\
u(x)&=1 &&\text{for $x \in \overline{K}$,}\\
u(x)&=0 &&\text{for $x \in D^c$,}\\
D_n^{\alpha/2}u(x) &= -\lambda&& \text{for $x \in \partial K$.}
\end{aligned}
\right.
\end{equation*}
\end{problem}
Here $(-\Delta)^{\alpha/2}$ denotes the fractional Laplacian given by
$$
(-\Delta)^{\alpha/2}f(x) = \lim_{r \to 0^+} \frac{\alpha 2^{\alpha-1} \Gamma\left(\frac{d+\alpha}{2}\right)}{\pi^{d/2} \Gamma(1-\alpha/2)}\int_{\{z \in \R^d: \, |z| > r\}}\frac{f(x+z) - f(x)}{|z|^{d+\alpha}} \, dz
$$
and $D_n^{\alpha/2}$ denotes the generalized normal derivative given by
\begin{equation}
\label{generalized_normal_derivative}
D_n^{\alpha/2}u(x) = \lim_{t \to 0^+} \frac{u(x + t n(x)) - u(x)}{t^{\alpha/2}},
\end{equation}
where $n(x)$ is the outward unit normal vector to $K$ at $x$. As usual, by a domain we understand a nonempty, connected open set, by a domain of class $C^1$ we understand a domain, which boundary is locally a graph of a $C^1$ function.

When $\alpha = 2$, that is if $(-\Delta)^{\alpha/2}$ is replaced by $(-\Delta)$ and $D_n^{\alpha/2}$ is replaced by the normal derivative $$\partial_n u(x) = \lim_{t \to 0^+} \frac{u(x + t n(x)) - u(x)}{t},$$ Problem \ref{bernoulli_problem} is just the classical interior Bernoulli problem, which have been intensively studied, see e.g. \cite{AC1981, A1989, FR1997, HS1997, HS2000, CT2002, BS2009}. It arises in various nonlinear flow laws and several physical situation e.g. electrochemical machining and potential flow in fluid mechanics.

In the classical case $(\alpha = 2)$ it is well known that Problem \ref{bernoulli_problem} does not have a solution for any positive level $\lambda$. For example, when $D$ is convex, it is proved in \cite{HS2000} that there is some positive constant $\lambda_D$ such that this problem has a solution for level $\lambda$ if and only if $\lambda \ge \lambda_D$. This constant $\lambda_D$ is called a Bernoulli constant. It is also known that even if there are solutions to the problem for some $\lambda$ there is no uniqueness in general. For example, if $D$ is a ball in $\R^d$ ($d \ge 2$) there are exactly $2$ solutions to the problem for any $\lambda > \lambda_D$, while for $\lambda = \lambda_D$ the solution is unique (see e.g. \cite[Section 3]{FR1997}). A very interesting and open question is whether for general convex bounded domains $D$ the structure of the solutions to the Bernoulli problem enjoys similar features. See \cite{CT2002} for some discussion on this question in the classical case.

The main aim of this paper is to study the structure of solutions of Problem \ref{bernoulli_problem} in the simplest geometric case i.e. when $D$ is an interval. The main results of our paper are the following theorems.
\begin{theorem}\label{alpha02}
    Let $\alpha \in (0,2)$, $x_0 \in \R$, $r > 0$ and $D = (x_0-r,x_0+r)$. Then there is a constant $\lambda_{\alpha,D} > 0$ such that 
    Problem \ref{bernoulli_problem} has
    \begin{enumerate}
        \item at least two solutions for $\lambda> \lambda_{\alpha,D}$,
        \item at least one solution for $\lambda = \lambda_{\alpha,D}$,
        \item no solution for $\lambda < \lambda_{\alpha,D}$.
    \end{enumerate}
Moreover, if $(u,K)$ is any solution of Problem \ref{bernoulli_problem}, then $K$ is symmetric with respect to $x_0$. We also have $\lambda_{\alpha,D} = \lambda_{\alpha,(x_0-r,x_0+r)} = r^{-\alpha/2} \lambda_{\alpha,(-1,1)}$. 		
\end{theorem}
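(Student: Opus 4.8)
The plan is to parametrize candidate solutions by a single scalar through a symmetric choice of $K$, analyze that scalar, and separately prove that every solution is symmetric. First, I would record the scaling behaviour: under the dilation $x\mapsto x_0+rx$ the operator $(-\Delta)^{\alpha/2}$ picks up a factor $r^{-\alpha}$ and, from \eqref{generalized_normal_derivative}, $D_n^{\alpha/2}$ picks up a factor $r^{-\alpha/2}$. Hence $(v,K_0)$ solves Problem~\ref{bernoulli_problem} on $(-1,1)$ at level $\mu$ if and only if $\big(x\mapsto v((x-x_0)/r),\,x_0+rK_0\big)$ solves it on $D$ at level $r^{-\alpha/2}\mu$. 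Thus the set of $\lambda$ for which $D$ admits a solution is exactly $r^{-\alpha/2}$ times the one for $(-1,1)$, and the whole solution structure transfers; in particular $\lambda_{\alpha,D}=r^{-\alpha/2}\lambda_{\alpha,(-1,1)}$ once the theorem is established for $(-1,1)$, to which we now restrict.

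\textbf{Step 2 (reduction to a scalar equation).} Any domain $K\subset(-1,1)$ is an interval $(a,b)$ with $-1<a<b<1$: if $a=-1$, then $u$ would have to equal both $1$ (on $\overline K$) and $0$ (on $D^c$) at the point $-1$, and similarly $b\neq 1$. By the maximum principle there is a unique bounded function $u_{a,b}$, necessarily with values in $[0,1]$, that is $\alpha$-harmonic on $(-1,a)\cup(b,1)$, equals $1$ on $[a,b]$ and $0$ on $(-1,1)^c$; by the boundary behaviour of $\alpha$-harmonic functions, $1-u_{a,b}(x)$ is asymptotic to a positive multiple of $\dist(x,\partial K)^{\alpha/2}$ as $x\to a^-$ and as $x\to b^+$, so $D_n^{\alpha/2}u_{a,b}$ exists and is finite and negative at $a$ and at $b$. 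Hence $(u,K)$ solves Problem~\ref{bernoulli_problem} if and only if $u=u_{a,b}$ and $F_1(a,b)=F_2(a,b)=\lambda$, where $F_1(a,b):=-D_n^{\alpha/2}u_{a,b}(a)>0$ and $F_2(a,b):=-D_n^{\alpha/2}u_{a,b}(b)>0$. The reflection $x\mapsto -x$ gives $u_{a,b}(-x)=u_{-b,-a}(x)$, hence $F_1(a,b)=F_2(-b,-a)$; writing $\phi(c):=F_1(-c,c)=F_2(-c,c)$ for $c\in(0,1)$, symmetric solutions correspond exactly to the roots of $\phi(c)=\lambda$.

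\textbf{Step 3 (analysis of $\phi$).} Using the explicit formulas for $u_{-c,c}$ and for $D_n^{\alpha/2}u_{-c,c}$, which come from the Poisson kernel and Green function of an interval, I would prove that $\phi\colon(0,1)\to(0,\infty)$ is continuous and that $\phi(c)\to+\infty$ as $c\to 0^+$ and as $c\to 1^-$ (heuristically, $u_{-c,c}$ must drop from $1$ to essentially $0$ across a gap collapsing to a point, resp.\ across the vanishing gaps $(-1,-c)$ and $(c,1)$). Therefore $\phi$ attains a positive minimum $\lambda_{\alpha,D}:=\min_{c\in(0,1)}\phi(c)$ at some $c_\ast$, and: for $\lambda<\lambda_{\alpha,D}$ there is no symmetric solution; for $\lambda=\lambda_{\alpha,D}$ the pair $(u_{-c_\ast,c_\ast},(-c_\ast,c_\ast))$ is a solution; for $\lambda>\lambda_{\alpha,D}$ the intermediate value theorem applied on $(0,c_\ast)$ and on $(c_\ast,1)$ yields $c_1<c_\ast<c_2$ with $\phi(c_1)=\phi(c_2)=\lambda$, hence two distinct solutions. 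Every $(u_{-c,c},(-c,c))$ so produced is admissible for Problem~\ref{bernoulli_problem} (an open interval is trivially a $C^1$ domain).

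\textbf{Step 4 (symmetry --- the main obstacle).} It remains to prove that every solution is symmetric about $0$, i.e.\ that $F_1(a,b)=F_2(a,b)$ forces $a=-b$. Given Steps 2--3, this simultaneously forces all solutions into the form $(u_{-c,c},(-c,c))$ and rules out solutions for $\lambda<\lambda_{\alpha,D}$, completing the proof. Writing $a=m-\ell$, $b=m+\ell$, this is equivalent (via $F_1(a,b)=F_2(-b,-a)$) to the injectivity of $m\mapsto F_1(m-\ell,m+\ell)$ for each fixed $\ell$, which I expect to deduce from the strict monotonicity statement that pushing $K$ toward the nearer endpoint of $D$ strictly increases the corresponding fractional normal derivative. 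This should follow either from the explicit expression for $F_1$ or from a comparison argument for $\alpha$-harmonic functions under the one-sided enlargement $(-1,a)\subset(-1,a')$ of the relevant component of $D\setminus\overline K$. Proving this monotonicity is the crux of the theorem; a moving-plane argument for $(-\Delta)^{\alpha/2}$ comparing $u$ with its reflection across $\{x=0\}$ is a possible alternative, but in one dimension the explicit route seems cleaner.
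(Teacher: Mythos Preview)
Your overall strategy---reduce to $(-1,1)$ by scaling, parametrize symmetric candidates by $c\in(0,1)$, study $\phi(c)=-D_n^{\alpha/2}u_{-c,c}(c)$, and separately prove symmetry---matches the paper's exactly. The gap is that Steps~3 and~4 both lean on ``explicit formulas for $u_{-c,c}$\ldots which come from the Poisson kernel and Green function of an interval.'' No such formula is available: the relevant domain $D\setminus\overline K=(-1,-c)\cup(c,1)$ is a union of \emph{two} disjoint intervals, and the paper explicitly identifies the absence of a closed-form Poisson kernel for this set as the chief technical obstacle. So neither your proposed analysis of $\phi$ nor the ``explicit route'' to symmetry can be carried out as written.

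What the paper actually does: for $\phi$ (their $\Psi$), it represents $u_{-c,c}$ on $(c,1)$ via the single-interval kernel $P_{(c,1)}$ together with an iteration over successive reflections into $(-1,-c)$, obtaining an integral expression for $\Psi$ from which continuity follows (Lemma~\ref{psi_properties}). The limit $c\to1^-$ is easy, but your heuristic for $c\to0^+$ is wrong---no gap is collapsing in that regime---and the blow-up is a genuinely nonlocal effect driven by interaction with the far component; the paper extracts it via a lower bound on $1-u_{-c,c}$ that yields a logarithmic divergence (Lemma~\ref{asymptotics_psi}). For symmetry (Proposition~\ref{symmetry}), the paper uses precisely the reflection approach you set aside: assuming $K$ is off-center, it compares $u$ with its reflection through the midpoint of $K$ using Green-function inequalities on a symmetrized two-interval set (Lemmas~\ref{Green_estimates_1}--\ref{Green_estimates_2}) to show the two boundary derivatives cannot agree. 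Your monotonicity-in-$m$ idea may be viable, but not through explicit formulas.
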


The constant $\lambda_{\alpha,D}$ is called the Bernoulli constant for the fractional Laplacian for a domain $D$. Next result provides 
some estimates of this constant for interval $(-1,1)$.
\begin{theorem}
\label{estimates_lambda}
For any $\alpha \in (0,2)$ we have
$$
C_{\alpha} \left( T_{\alpha} + \frac{1}{\alpha 2^{\alpha}} \right)\le \lambda_{\alpha,(-1,1)} \le  C_\alpha \left( \frac{2}{\alpha} \right)^{\alpha/2} \left( T_\alpha + \frac{1}{\alpha 2^\alpha} \left( \frac{\alpha}{2-\alpha} \right)^\alpha \right),
$$
where $C_{\alpha} = \pi^{-1} \sin(\pi \alpha/2)$, $T_{\alpha} = B(\alpha,1-\alpha/2)$.
\end{theorem}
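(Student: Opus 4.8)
By Theorem~\ref{alpha02}, for $D=(-1,1)$ every solution of Problem~\ref{bernoulli_problem} is of the symmetric form $(u_a,(-a,a))$ with $a\in(0,1)$, where $u_a$ is the unique continuous function that equals $1$ on $[-a,a]$, equals $0$ on $(-1,1)^c$, and is $(-\Delta)^{\alpha/2}$-harmonic on $(-1,-a)\cup(a,1)$; comparing the three cases of Theorem~\ref{alpha02} also shows $\lambda_{\alpha,(-1,1)}=\min_{a\in(0,1)}\lambda(a)$, where $\lambda(a):=-D_n^{\alpha/2}u_a(a)$. The plan is thus to estimate $\lambda(a)$ from below for every $a$ (lower bound) and from above at one well-chosen $a$ (upper bound). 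The first step is a closed expression for $\lambda(a)$: since $u_a$ is bounded and $\alpha$-harmonic in the interval $(a,1)$, I would use the Poisson representation $u_a(x)=\int_{(a,1)^c}P_{(a,1)}(x,y)u_a(y)\,dy$ with the classical M.~Riesz kernel $P_{(a,1)}(x,y)=C_\alpha\bigl[\tfrac{(x-a)(1-x)}{(y-a)(y-1)}\bigr]^{\alpha/2}|x-y|^{-1}$; as $\int_{(a,1)^c}P_{(a,1)}(x,y)\,dy=1$, this gives $1-u_a(a+t)=\int_{(a,1)^c}P_{(a,1)}(a+t,y)(1-u_a(y))\,dy$, and dividing by $t^{\alpha/2}$ and letting $t\to0^+$ (a dominated convergence argument, legitimate because $1-u_a$ vanishes on $[-a,a]$, so the integrand stays supported away from $a$) yields the master formula
$$\lambda(a)=C_\alpha(1-a)^{\alpha/2}\int_{(a,1)^c}\frac{1-u_a(y)}{[(y-a)(y-1)]^{\alpha/2}\,|a-y|}\,dy .$$

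Both estimates then rest on the trivial pointwise bounds $\mathbf 1_{\{|y|\ge1\}}\le 1-u_a\le\mathbf 1_{\{|y|\ge a\}}$ on $(a,1)^c$, which follow from $0\le u_a\le1$, $u_a\equiv1$ on $[-a,a]$, $u_a\equiv0$ on $(-1,1)^c$. With either bound the contribution of the part $[1,\infty)$ of the integral is the same, and the substitution $y-1=(1-a)s$ turns $\int_1^\infty\frac{dy}{[(y-a)(y-1)]^{\alpha/2}(y-a)}$ into $(1-a)^{-\alpha}\int_0^\infty s^{-\alpha/2}(1+s)^{-1-\alpha/2}\,ds=(1-a)^{-\alpha}B(\alpha,1-\alpha/2)=(1-a)^{-\alpha}T_\alpha$; this piece contributes $C_\alpha(1-a)^{-\alpha/2}T_\alpha$ to each bound.

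For the upper bound I would take the trial value $a_0=1-\alpha/2$ — chosen precisely so that $(1-a_0)^{-\alpha/2}=(2/\alpha)^{\alpha/2}$ — and use $1-u_{a_0}\le\mathbf 1_{\{|y|\ge a_0\}}$. The remaining (left) part of the master integral is $\int_{-\infty}^{-a_0}\frac{dy}{[(a_0-y)(1-y)]^{\alpha/2}(a_0-y)}$, which after $y=-s$ and $p=a_0+s$ equals $\int_{2-\alpha}^{\infty}p^{-1-\alpha/2}(p+\alpha/2)^{-\alpha/2}\,dp\le\int_{2-\alpha}^{\infty}p^{-1-\alpha}\,dp=\tfrac{1}{\alpha(2-\alpha)^\alpha}$; adding the two pieces and factoring out $C_\alpha(2/\alpha)^{\alpha/2}$ reproduces, after elementary manipulation, exactly the stated right-hand side, so that $\lambda_{\alpha,(-1,1)}\le\lambda(a_0)\le C_\alpha(2/\alpha)^{\alpha/2}\bigl(T_\alpha+\tfrac{1}{\alpha2^\alpha}(\tfrac{\alpha}{2-\alpha})^\alpha\bigr)$. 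For the lower bound I would use $1-u_a\ge\mathbf 1_{\{|y|\ge1\}}$; the left part is then $\int_{-\infty}^{-1}\frac{dy}{[(a-y)(1-y)]^{\alpha/2}(a-y)}$, which after $y=-s$ becomes $\int_1^\infty(a+s)^{-1-\alpha/2}(1+s)^{-\alpha/2}\,ds\ge\int_1^\infty(1+s)^{-1-\alpha}\,ds=\tfrac{1}{\alpha2^\alpha}$, \emph{uniformly in $a$} (using $a+s\le1+s$). Hence $\lambda(a)\ge C_\alpha\bigl((1-a)^{-\alpha/2}T_\alpha+(1-a)^{\alpha/2}\tfrac1{\alpha2^\alpha}\bigr)$ for every $a\in(0,1)$.

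The last point — and the one requiring some care — is that $\lambda(\cdot)$ is not monotone (it blows up at both ends of $(0,1)$ and its minimum is interior), so the lower estimate cannot simply be evaluated at $a\to0$, nor do the two terms above separately stay above $C_\alpha T_\alpha$ and $C_\alpha/(\alpha2^\alpha)$. Instead, writing $x=(1-a)^{\alpha/2}\in(0,1]$ one invokes the elementary fact that $x\mapsto A/x+Bx$ is non-increasing on $(0,1]$ whenever $A\ge B>0$; since $T_\alpha=\int_0^1 t^{\alpha-1}(1-t)^{-\alpha/2}\,dt\ge\int_0^1 t^{\alpha-1}\,dt=\tfrac1\alpha\ge\tfrac1{\alpha2^\alpha}$, taking $A=T_\alpha$, $B=\tfrac1{\alpha2^\alpha}$ gives $\lambda(a)\ge C_\alpha(T_\alpha+\tfrac1{\alpha2^\alpha})$ for all $a$, whence the lower bound. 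The genuine obstacles are thus the structural facts imported from Theorem~\ref{alpha02} and the rigorous justification of the normal-derivative limit and its interchange with the Poisson integral; after that everything is exact Beta-integral bookkeeping together with the single clever choice $a_0=1-\alpha/2$, which is what manufactures the factors $(2/\alpha)^{\alpha/2}$ and $(\alpha/(2-\alpha))^\alpha$ appearing on the right.
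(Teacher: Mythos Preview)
Your proof is correct and follows essentially the same route as the paper's: the paper also derives the representation $\Psi(a)=C_\alpha(1-a)^{\alpha/2}\bigl(\int_1^\infty\Phi(a,y)\,dy+\int_a^\infty\Phi(a,-y)\,dy-\int_a^1\Phi(a,-y)f_a(y)\,dy\bigr)$ (equivalent to your master formula with $f_a=u_a$), evaluates the $[1,\infty)$ piece as $T_\alpha(1-a)^{-\alpha}$, bounds the left piece by $\int_1^\infty(1+z)^{-1-\alpha}\,dz=1/(\alpha2^\alpha)$ below and by $\int_a^\infty(a+z)^{-1-\alpha}\,dz=1/(\alpha(2a)^\alpha)$ above, and plugs in the same trial point $a=1-\alpha/2$ for the upper bound. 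The only cosmetic differences are that the paper computes the $T_\alpha$ integral via a hypergeometric identity rather than a direct substitution, and shows the lower envelope $L(a)=C_\alpha\bigl((1-a)^{-\alpha/2}T_\alpha+(1-a)^{\alpha/2}/(\alpha2^\alpha)\bigr)$ is increasing by differentiating in $a$ (invoking the same inequality $T_\alpha>1/\alpha$) rather than by your $A/x+Bx$ argument.
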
 

Bernoulli problems for fractional Laplacians have been investigated for the first time by Caffarelli, Roquejoffre and Sire in \cite{CRS2010}. Such problems are relevant in classical physical models in mediums where long range interactions are present. Bernoulli problems for  $(-\Delta)^{\alpha/2}$ have been intensively studied in recent years see e.g. \cite{SR2012, DSS2015, DSS2015b, SSS2014, EKPSS2019, FR2022}. In these papers mainly regularity of free boundary of solutions of the related variational problem was studied. This variational problem is formulated in Section \ref{conjectures}. 

One dimensional Bernoulli problems for the fractional Laplacian for $\alpha = 1$ are related to some reaction-diffusion equations, which can model the combustion of an oil slick on the ground with the temperature diffucing above ground (see \cite{CMS2012})

Problem \ref{bernoulli_problem} for $\alpha = 1$ have been already studied in \cite{JKS2022}. In that paper (in the part devoted to the inner Bernoulli problem) only existence of the related variational problem was studied, without investigating the number of solutions.
In our paper we study all solutions to Problem \ref{bernoulli_problem} and not only solutions to the variational problem. The relation between Problem \ref{bernoulli_problem} and the variational problem is discussed in Section \ref{conjectures}.

The main difficulty in proving Theorem \ref{alpha02} is caused by nonlocality of the fractional Laplacian. From technical point of view this is manifested by the fact that there is no explicit formula of the Poisson kernel corresponding to $(-\Delta)^{\alpha/2}$ for an open set, which is the sum of 2 disjoint intervals, although the explicit formula of the Possion kernel corresponding to $(-\Delta)^{\alpha/2}$ for an interval is known. The nonlocal Problem \ref{bernoulli_problem} for an interval can be transformed to the local one in one dimension higher (see e.g. \cite{CS2007}) but this does not seem to allow to find the explicit formula for the Poisson kernel corresponding to $(-\Delta)^{\alpha/2}$ for the sum of 2 disjoint intervals. 

In our paper we study also a simplified version of Problem \ref{bernoulli_problem}. This is the interior Bernoulli problem for the fractional Laplacian for an interval with one free boundary point. It is formulated as follows. 
\begin{problem}
\label{bernoulli_problem_1}
Given $\alpha \in (0,2)$, $x_0 \in \R$, $r > 0$, $D = (x_0 - r, x_0 + r)$ and a given constant $\lambda > 0$ we look for a Borel measurable function $u: \R^d \to [0,1]$, continuous in $D$ and an interval $K = (a,x_0+r) \subset D$ satisfying
\begin{equation*}
\left\{
\begin{aligned}
(-\Delta)^{\alpha/2}u(x)&=0 &&\text{for $x \in D \setminus \overline{K}$,}\\
u(x)&=1 &&\text{for $x \in \overline{K} \setminus \{x_0 + r\}$,}\\
u(x)&=0 &&\text{for $x \in D^c$,}\\
D_n^{\alpha/2}u(a) &= -\lambda,&& 
\end{aligned}
\right.
\end{equation*}
where $D_n^{\alpha/2}$ is given by \eqref{generalized_normal_derivative}.
\end{problem}
Clearly, the point $a$ is the unique free boundary point for this solution.

The main result concerning Problem \ref{bernoulli_problem_1} is the following theorem.
\begin{theorem}\label{one_free_thm}
    Let $\alpha \in (0,2)$, $x_0 \in \R$, $r > 0$ and $D = (x_0-r,x_0+r)$. Then there is a constant $\mu_{\alpha,D} > 0$ such that the Problem \ref{bernoulli_problem_1} has
    \begin{enumerate}
        \item exactly two solutions for $\lambda> \mu_{\alpha,D}$,
        \item exactly one solution for $\lambda = \mu_{\alpha,D}$,
        \item no solution for $\lambda < \mu_{\alpha,D}$.
    \end{enumerate}
We also have $\mu_{\alpha,D} = \mu_{\alpha,(x_0-r,x_0+r)} = r^{-\alpha/2} \mu_{\alpha,(-1,1)}$. 		
\end{theorem}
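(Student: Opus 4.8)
The plan is to follow the strategy behind Theorem \ref{alpha02}, but the analysis is much lighter because the free boundary is a single point, so an admissible configuration is parametrised by one real number. First I would reduce to $D=(-1,1)$: if $(u,K)$ solves Problem \ref{bernoulli_problem_1} on $D=(x_0-r,x_0+r)$ at level $\lambda$, then $y\mapsto u(x_0+ry)$ on $(-1,1)$, together with the image of $K$ under $x\mapsto (x-x_0)/r$, solves the problem on $(-1,1)$ at level $r^{\alpha/2}\lambda$, since rescaling the variable multiplies $(-\Delta)^{\alpha/2}$ by $r^{\alpha}$ and $D_n^{\alpha/2}$ by $r^{\alpha/2}$. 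Hence $\mu_{\alpha,D}:=r^{-\alpha/2}\mu_{\alpha,(-1,1)}$ is the correct normalisation and it remains to treat $(-1,1)$. For a fixed $a\in(-1,1)$ put $K=(a,1)$. Any admissible $u$ for this $K$ is uniquely determined: on $(-1,a)$ it must equal the bounded $\alpha$-harmonic function $u_a$ with exterior data $1$ on $[a,1)$ and $0$ on $(-\infty,-1]\cup[1,\infty)$ (uniqueness from the maximum principle for $\alpha$-harmonic functions), while elsewhere $u$ is prescribed. Using the explicit one-dimensional Poisson kernel of an interval one checks that $u_a$ is continuous in $D$, takes values in $[0,1]$, equals $1$ at $a$ as a limit from inside $(-1,a)$ (the endpoint $a$ is a regular boundary point of $(-1,a)$ and the exterior data is $\equiv1$ to the right of $a$), and that $v_a:=1-u_a$ is $\alpha$-harmonic on $(-1,a)$ with exterior data $1$ on $(-\infty,-1]\cup[1,\infty)$ and $0$ on $[a,1)$. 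Since the outward normal to $K$ at $a$ points to the left, $D_n^{\alpha/2}u_a(a)=-\lim_{t\to0^+}t^{-\alpha/2}v_a(a-t)=:-g(a)$, so Problem \ref{bernoulli_problem_1} has a solution with free boundary point $a$ exactly when $g(a)=\lambda$, and distinct values of $a$ yield distinct solutions. Everything thus reduces to understanding the function $g\colon(-1,1)\to(0,\infty)$.

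The second step is to compute $g$ essentially explicitly. From the Poisson kernel $P_{(-1,a)}(x,y)=C_\alpha\,((x+1)(a-x))^{\alpha/2}\,|(y+1)(y-a)|^{-\alpha/2}|y-x|^{-1}$ and dominated convergence, letting $x=a-t\to a^-$ gives
\[
 g(a)=C_\alpha\,(1+a)^{\alpha/2}\int_{(-\infty,-1]\cup[1,\infty)}\frac{dy}{|y+1|^{\alpha/2}\,|y-a|^{1+\alpha/2}} .
\]
The part over $(-\infty,-1]$ evaluates, after $y=-1-s$ and then $s=(1+a)\sigma$, to $(1+a)^{-\alpha}T_\alpha$; the part over $[1,\infty)$, after $y-a=(1-a)/s$ and then $v=ms$ with $m:=(1+a)/(1-a)\in(0,\infty)$, becomes a Beta-type integral, and the whole expression collapses to
\[
 g(a)=C_\alpha\Bigl(\frac{m+1}{2m}\Bigr)^{\alpha/2}F(m),\qquad F(m):=T_\alpha+\int_0^m\frac{v^{\alpha-1}}{(1+v)^{\alpha/2}}\,dv .
\]
In particular $g$ is smooth on $(-1,1)$; since $F(0^+)=T_\alpha$ and $F(m)\to\infty$ as $m\to\infty$ (the integrand behaves like $v^{\alpha/2-1}$), and $(\frac{m+1}{2m})^{\alpha/2}\to\infty$ as $m\to0^+$, one reads off that $g(a)\to+\infty$ as $a\to-1^+$ and as $a\to1^-$.

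The third step — the real content — is the shape of $g$. Writing $G(m)=(\frac{m+1}{2m})^{\alpha/2}F(m)$, so that $g(a)=C_\alpha G(m)$ with $m=(1+a)/(1-a)$, one computes
\[
 \frac{d}{dm}\log G(m)=-\frac{\alpha}{2m(m+1)}+\frac{F'(m)}{F(m)},\qquad F'(m)=\frac{m^{\alpha-1}}{(1+m)^{\alpha/2}},
\]
so that $G'(m)$ has the same sign as $H(m):=\tfrac{2}{\alpha}m^{\alpha}(1+m)^{1-\alpha/2}-F(m)$. Here $H(0^+)=-T_\alpha<0$ and $H(m)\to+\infty$ as $m\to\infty$, and the key (and pleasantly clean) identity is
\[
 H'(m)=\frac{m^{\alpha-1}}{(1+m)^{\alpha/2}}\Bigl(1+m+\frac{2m}{\alpha}\Bigr)>0\qquad\text{for all }m>0 .
\]
Hence $H$ is strictly increasing with a unique zero $m^\ast$, so $G$, and therefore $g$, is strictly decreasing on $(0,m^\ast)$ and strictly increasing on $(m^\ast,\infty)$; equivalently $g$ is strictly decreasing on $(-1,a^\ast)$ and strictly increasing on $(a^\ast,1)$, with $a^\ast=(m^\ast-1)/(m^\ast+1)$. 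Setting $\mu_{\alpha,(-1,1)}:=g(a^\ast)=\min_{(-1,1)}g>0$ and combining strict monotonicity on each side with $g\to+\infty$ at both endpoints and the intermediate value theorem, the equation $g(a)=\lambda$ has no solution for $\lambda<\mu_{\alpha,(-1,1)}$, exactly one for $\lambda=\mu_{\alpha,(-1,1)}$, and exactly two for $\lambda>\mu_{\alpha,(-1,1)}$. Together with the scaling from the first step this is precisely the assertion.

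The steps I expect to need the most care are the ``soft'' ones rather than the computation: confirming that $u_a$ is the \emph{unique} admissible function for a given $K$ and that it admits the boundary expansion $u_a(a-t)=1-g(a)\,t^{\alpha/2}+o(t^{\alpha/2})$ with $g(a)\in(0,\infty)$ — both handled via the explicit interval Poisson kernel, dominated convergence near the boundary, and the maximum principle. The one genuinely delicate point in the quantitative part is to \emph{find} the substitution $m=(1+a)/(1-a)$ and the auxiliary function $H$ that render the sign of $H'$ transparent; once these are in hand the monotonicity of $g$, and hence the full trichotomy, is immediate.
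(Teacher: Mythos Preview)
Your argument is correct, and it follows a genuinely different route from the paper's. Both proofs parametrise solutions by the single free boundary point, recognise that the problem reduces to understanding the map $a\mapsto -D_n^{\alpha/2}u_a(a)$, and establish the endpoint blow-up $\lim_{a\to\partial D}=\infty$ together with a shape result that forces exactly one interior minimum. The difference is in that shape result. The paper works on $(0,1)$, expresses the derivative as
\[
R(a)=C_\alpha\Bigl(T_\alpha\,a^{-\alpha/2}+\alpha^{-1}a^{\alpha/2}\,{}_2F_1\bigl(\tfrac{\alpha}{2}+1,\alpha;\alpha+1;a\bigr)\Bigr),
\]
and proves that $R$ is \emph{strictly convex} by differentiating the hypergeometric power series term-by-term and invoking the inequality $B(\alpha,1-\alpha/2)>1/\alpha$. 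You instead work on $(-1,1)$, pass to the variable $m=(1+a)/(1-a)$, and show directly that $G$ is \emph{strictly unimodal} by computing that the sign of $G'$ is governed by $H(m)=\tfrac{2}{\alpha}m^{\alpha}(1+m)^{1-\alpha/2}-F(m)$, whose derivative factors cleanly as $m^{\alpha-1}(1+m)^{-\alpha/2}\bigl(1+m+\tfrac{2m}{\alpha}\bigr)>0$.

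What each approach buys: the paper's convexity is a slightly stronger conclusion (and feeds naturally into the hypergeometric framework used elsewhere, e.g.\ in the estimates of Theorem~\ref{estimates_lambda}), but it relies on special-function identities and the auxiliary Lemma~\ref{beta}. Your route is more elementary and self-contained --- no hypergeometric series, no beta-function inequality --- and the substitution $m=(1+a)/(1-a)$ makes the monotonicity of $H$ essentially a one-line computation. Either method closes the theorem; yours would be a welcome simplification if one does not need convexity per se.
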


For $\alpha = 1$ we are able to obtain explicit formulas for $\mu_{\alpha,D}$ and solutions of Problem~\ref{bernoulli_problem_1}. 
\begin{theorem}\label{one_free_thm_1}
Let $\alpha = 1$, $x_0 \in \R$, $r > 0$ and $D = (x_0-r,x_0+r)$. Then we have $\mu_{1,D} = 2\sqrt{2}/(\pi \sqrt{r})$. For $\lambda = \mu_{1,D}$ we have 1 solution of Problem \ref{bernoulli_problem_1} given by  
$$
K = (x_0,x_0+r), \quad  u(x) =  \frac{2}{\pi} \arctan\left(\frac{(x-x_0+r)^{1/2}}{(2(x_0-x))^{1/2}} \right), \, x \in D \setminus \overline{K}.
$$ 
For $\lambda > \mu_{1,D}$ we have 2 solutions $(K_1,u_1)$ and $(K_2,u_2)$ of Problem \ref{bernoulli_problem_1} given by
$$
K_i = (a_i, x_0+r), \quad u_i(x) = \frac{2}{\pi}\arctan \left( \frac{(x-x_0+r)^{1/2}(x_0+r-a_i)^{1/2}}{(a_i-x)^{1/2}(2r)^{1/2}} \right), 
$$
where $x \in D \setminus \overline{K_i}$, $a_i = x_0 + (-1)^i (1-\mu^2_{1,D}/\lambda^2)^{1/2}$ for $i=1,2$.
\end{theorem}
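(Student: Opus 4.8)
The plan is to use the scaling relation in Theorem~\ref{one_free_thm} to reduce to $x_0=0$, $r=1$, $D=(-1,1)$, solve the problem explicitly there, and push the solutions forward under $x\mapsto x_0+rx$ (which also turns $\mu_{1,(-1,1)}$ into $\mu_{1,D}=r^{-1/2}\mu_{1,(-1,1)}$). So I fix a candidate free boundary point $a\in(-1,1)$ and put $K=(a,1)$. On $D\setminus\overline K=(-1,a)$ the function $u$ must satisfy $(-\Delta)^{1/2}u=0$, be bounded (it is $[0,1]$-valued), and agree off $(-1,a)$ with the prescribed data: $0$ on $(-\infty,-1]$, $1$ on $[a,1)$, $0$ on $[1,\infty)$. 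By uniqueness of the bounded $(-\Delta)^{1/2}$-harmonic extension on an interval this forces $u=u_a$ on $(-1,a)$, where
\begin{equation*}
u_a(x)=\int_a^1 P_{(-1,a)}(x,y)\,dy,\qquad x\in(-1,a),
\end{equation*}
and $P_{(-1,a)}(x,y)=\frac1\pi\bigl(\frac{(x+1)(a-x)}{(y+1)(y-a)}\bigr)^{1/2}\frac1{|x-y|}$ is the explicit Poisson kernel of $(-1,a)$ for $\alpha=1$. Conversely, $u_a$ extended by the required exterior values is $[0,1]$-valued (since $\int P_{(-1,a)}(x,\cdot)=1$) and continuous on $D$, and it satisfies every line of Problem~\ref{bernoulli_problem_1} except the flux condition $D_n^{1/2}u(a)=-\lambda$; so once $u_a$ is computed, the whole problem reduces to a single scalar equation for $a$.

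The computational heart is the evaluation of that integral. For $x\in(-1,a)$ and $y\in(a,1)$ one has $|x-y|=y-x$, so
\begin{equation*}
u_a(x)=\frac{\sqrt{(x+1)(a-x)}}{\pi}\int_a^1\frac{dy}{\sqrt{(y+1)(y-a)}\,(y-x)} .
\end{equation*}
I will evaluate this by the substitution $t=\sqrt{(y-a)/(y+1)}$, equivalently $y=(a+t^2)/(1-t^2)$, which rationalizes the radical and, after simplification, collapses the integral to the elementary arctangent integral $\int_0^{\sqrt{(1-a)/2}}2\,dt/\bigl((a-x)+(1+x)t^2\bigr)$. Carrying this out gives
\begin{equation*}
u_a(x)=\frac{2}{\pi}\arctan\!\left(\sqrt{\frac{(x+1)(1-a)}{2(a-x)}}\,\right),\qquad x\in(-1,a),
\end{equation*}
which, after undoing the scaling, is exactly the asserted formula for $u_i$; in particular $u_a(x)\to1$ as $x\to a^-$, confirming continuity at $a$.

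It remains to impose the flux condition and count solutions. The outward normal to $K=(a,1)$ at $a$ is $n(a)=-1$ and $u_a(a)=1$, so from $\arctan z=\tfrac\pi2-z^{-1}+O(z^{-3})$ as $z\to+\infty$ I get, as $t\to0^+$,
\begin{equation*}
u_a(a-t)-u_a(a)=-\frac{2}{\pi}\sqrt{\frac{2t}{(a+1-t)(1-a)}}+o(t^{1/2})=-\frac{2\sqrt2}{\pi\sqrt{1-a^2}}\,t^{1/2}+o(t^{1/2}),
\end{equation*}
hence $D_n^{1/2}u_a(a)=-2\sqrt2/(\pi\sqrt{1-a^2})$. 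Setting this equal to $-\lambda$ gives $a^2=1-8/(\pi^2\lambda^2)$; since $a^2<1$ automatically, every such $a$ is admissible, and $a\mapsto2\sqrt2/(\pi\sqrt{1-a^2})$ attains its minimum $2\sqrt2/\pi$ at $a=0$. Hence on $(-1,1)$ the problem has no solution for $\lambda<2\sqrt2/\pi$, the single solution $a=0$ for $\lambda=2\sqrt2/\pi$, and exactly the two solutions $a=\pm(1-8/(\pi^2\lambda^2))^{1/2}$ for $\lambda>2\sqrt2/\pi$; comparing with Theorem~\ref{one_free_thm}, $\mu_{1,(-1,1)}=2\sqrt2/\pi$, and scaling back yields $\mu_{1,D}=2\sqrt2/(\pi\sqrt r)$ together with the stated $(K_i,u_i)$.

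I expect the only real work to be the integral evaluation — checking that $t=\sqrt{(y-a)/(y+1)}$ really does clear the radical and produce a pure arctangent (and not, say, an extra logarithmic term) — together with reading off the $t^{1/2}$-coefficient of $u_a$ near $a$ to obtain the generalized normal derivative. The structural facts (that $u$ is forced to equal $u_a$, that $u_a$ is a legitimate competitor, and the scaling relation) are available from the machinery already used for Theorem~\ref{one_free_thm}, and the counting of roots is then immediate.
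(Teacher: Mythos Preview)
Your argument is correct and follows the same overall strategy as the paper: reduce to a standard interval, compute the harmonic extension $u_a$ explicitly via the Poisson integral (obtaining the arctan formula), read off the generalized normal derivative, solve the resulting scalar equation for $a$, and then scale and translate back. Your choice of $(-1,1)$ rather than the paper's $(0,1)$ is immaterial.

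The one noteworthy difference is in how the normal derivative is obtained. The paper specializes its general-$\alpha$ formula \eqref{D(a)} for $R(a)$, which was derived via hypergeometric integrals, to $\alpha=1$ (using $F_1(a)={}_2F_1(3/2,1;2;a)=(2-2\sqrt{1-a})/(a\sqrt{1-a})$) and simplifies to $2/(\pi\sqrt{a(1-a)})$ on $(0,1)$. You instead expand the explicit arctan formula near $x=a$ via $\arctan z=\pi/2-z^{-1}+O(z^{-3})$ and read off the $t^{1/2}$ coefficient directly, obtaining $2\sqrt{2}/(\pi\sqrt{1-a^2})$ on $(-1,1)$. Your route is more elementary and self-contained for $\alpha=1$, since it bypasses the hypergeometric machinery entirely; the paper's route has the advantage of reusing work already done for general $\alpha$. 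The two expressions are of course equivalent under the affine change between $(0,1)$ and $(-1,1)$, and both lead to the same minimum $\mu_{1,D}=2\sqrt{2}/(\pi\sqrt{r})$ and the same quadratic for the free boundary points.
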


\begin{figure}
\centering
\begin{subfigure}{.5\textwidth}
  \centering
  \includegraphics[scale=0.53]{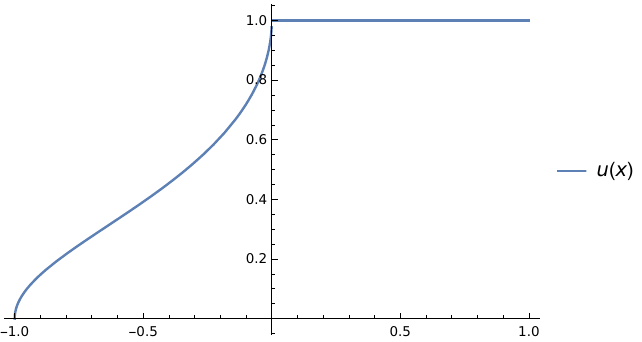}
  \caption{One solution for $\lambda = \mu_{1,D} = 2\sqrt{2}/\pi$.}
\end{subfigure}%
\begin{subfigure}{.5\textwidth}
  \centering
  \includegraphics[scale=0.53]{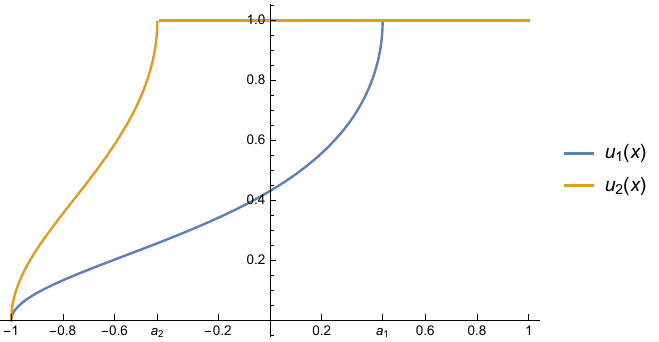}
  \caption{Two solutions for $\lambda = 1$.}
\end{subfigure}
\caption{Graphs of solutions of Problem \ref{bernoulli_problem_1} for $\alpha = 1$ and $D = (-1,1)$.} 
\label{fig:problem_2_graphs}
\end{figure}

Figure \ref{fig:problem_2_graphs} shows graphs of solutions given in Theorem \ref{one_free_thm_1}. Note, that although there is an extensive literature devoted to free boundary problems for fractional Laplacians, explicit solutions of these problems are quite rare. 

The paper is organized as follows. In Section \ref{preliminaries} we collect well known facts which will be used in the sequel. In Section \ref{one_free_point} we study the interior Bernoulli problem for the fractional Laplacian for an interval with one free boundary point. Section \ref{proof_main_results} contains proofs of main results of our paper. In Section \ref{conjectures} we present some conjectures and discuss the connection of the Bernoulli problem with the corresponding variational problem.

\section{Preliminaries}
\label{preliminaries}

In this section we present notation and gather some well known facts, which we need in the paper. In particular, we introduce Poisson kernel and Green function corresponding to the fractional Laplacian $(-\Delta)^{\alpha/2}$ (for $\alpha \in (0,2)$). We concentrate only on one-dimensional case and only on facts which are needed in this paper. For the detailed exposition of the potential theory corresponding to the fractional Laplacian we refer the reader to \cite{BBKRSV2009} or \cite{C1999}.

We denote $\N = \{1, 2, 3, \ldots\}$, for $D \subset \R$ we put $D^c = \R \setminus D$ and  for any $x \in \R$ we put $\delta_D(x) = \dist(x,D^c)$.

Let $\calG$ be a class of nonempty open sets $G$ in $\R$, $G \ne \R$ which has the representation
$$
G = \bigcup_{k = 1}^n G_k,
$$ 
where $n \in \N$ and all $G_k$ are intervals (bounded or unbounded) and $\dist(G_i,G_j) > 0$ for any $i, j \in \{1, \ldots, n\}$, $i \ne j$. The class $\calG$ plays a role of smooth sets in $\R$.

Fix $\alpha \in (0,2)$. For any $D \in \calG$ we denote by $P_D$ the Poisson kernel for $D$ corresponding to $(-\Delta)^{\alpha/2}$. $P_D: D \times (\overline{D})^c \to (0,\infty)$ has the following properties. For any $x \in D$ we have $$\int_{(\overline{D})^c} P_D(x,y) \, dy \le 1,$$ and if $D \in \calG$ is additionally bounded, then $$\int_{(\overline{D})^c} P_D(x,y) \, dy = 1.$$

Let $D \in \calG$ be bounded and let $f: D^c \to \R$ be bounded and measurable. Let us consider the following outer Dirichlet problem for the fractional Laplacian $(-\Delta)^{\alpha/2}$. We look for a bounded measurable function $u: \R \to \R$, continuous on $D$, satisfying
\begin{equation*}
\left\{
\begin{aligned}
(-\Delta)^{\alpha/2}u(x)&=0, &&\text{for $x \in D$,}\\
u(x)&=f(x), &&\text{for $x \in D^c$.}
\end{aligned}
\right.
\end{equation*}
Such Dirichlet problem has a unique solution, which is given by the formula
\begin{equation}
\label{Dirichlet_problem}
u(x) = \int_{(\overline{D})^c} P_{D}(x,y) f(y) \, dy, \quad x \in D,
\end{equation}
(see e.g. \cite[Lemma 13]{BKK2008}). When $f$ is continuous on $D^c$, it is well known that $u$ is continuous on $\R$.

On the other hand, if measurable, bounded function $u:\R \to \R$, continuous on $D$ satisfies
$$
(-\Delta)^{\alpha/2} u(x)  = 0, \quad \text{for $x \in D$}, 
$$
then the following mean value property is satisfied (see e.g. \cite[(61), (62)]{BKK2008}). For any bounded $B \subset D$, $B \in \calG$ and any $x \in B$ we have
\begin{equation}
\label{reg}
u(x) = \int_{(\overline{B})^c} P_{B}(x,y) u(y) \, dy.
\end{equation}

There are known explicit formulas of the Poisson kernels for intervals. For any $a, b \in \R$, $a < b$ we have (see e.g. \cite[(1.57)]{BBKRSV2009})
\begin{equation}
\label{Poisson_interval}
P_{(a,b)}(x,y) = C_{\alpha} \frac{((x-a)(b-x))^{\alpha/2}}{((y-a)(y-b))^{\alpha/2}} \frac{1}{|x-y|}, \quad \text{$x \in (a,b)$, $y \in [a,b]^c$,}
\end{equation}
where $C_{\alpha}$ is defined as in Theorem \ref{estimates_lambda}. 

For any $D \in \calG$ we denote by $G_D$ the Green function for $D$ corresponding to $(-\Delta)^{\alpha/2}$. It has the following properties: $G_D: \R \times \R \to [0,\infty]$ and $G_D(x,y) = 0$ if $x \in D^c$ or $y \in D^c$. For $x, y \in \R$ denote $h_{D,y}(x) = G_D(x,y)$. For any fixed $y \in D$ we have
$$
(-\Delta)^{\alpha/2} h_{D,y}(x) = 0, \quad \text{for $x \in D$.}
$$
The function $h_{D,y}$ satisfies the following mean value property (see e.g. \cite[(1.46)]{BBKRSV2009}). For any fixed $y \in D$, any bounded $B \subset D$, $B \in \calG$ such that $\dist(B,y) > 0$ we have
\begin{equation}
\label{Green_mean}
h_{D,y}(x) = \int_{(\overline{B})^c} P_{B}(x,z) h_{D,y}(z) \, dz, \quad \text{for $x \in B$.}
\end{equation}
For any $B \subset D$, $B, D \in \calG$ we have
\begin{equation}
\label{Green_comparability}
G_{B}(x,y) \le G_{D}(x,y), \quad \text{for $x, y \in \mathbb{R}$}.
\end{equation} 

As in the classical case, there is known representation of the Poisson kernel in terms of the Green function. For any bounded $D \in \calG$ we have (see e.g. \cite[(1.49)]{BBKRSV2009})
\begin{equation}
\label{Ikeda_Watanabe}
P_D(x,z) = \int_D G_D(x,y) \frac{\calA_{\alpha}}{|y - z|^{1+\alpha}} \, dy \quad \text{for $x \in D$, $z \in (\overline{D})^c$},
\end{equation}
where 
$$
\calA_{\alpha} = \frac{\alpha 2^{\alpha} \Gamma\left(\frac{1+\alpha}{2}\right)}{2 \sqrt{\pi} \Gamma\left(1 - \frac{\alpha}{2}\right)}.
$$
This is the same constant, which appear in the definition of $(-\Delta)^{\alpha/2}$ for dimension $d = 1$.

In the paper we need some well known facts concerning hypergeometric functions. For the convenience of the reader, we briefly present them below. Let $|z|<1$, $p,q,r \in \mathbb{R}$ and $-r \notin \N$. The (Gaussian) hypergeometric function is defined as $$\, _2F_1(p,q;r;z) = \sum_{n=0}^\infty \frac{(p)_n(q)_n}{(r)_n} \frac{z^n}{n!},$$ where $(\cdot)_n$ is a Pochhammer symbol. For $p \in \R$, $r > q >0$ and $|z|<1$ we have $$B(q,r-q) \, _2F_1(p,q;r;z) = \int_0^1 t^{q-1}(1-t)^{r-q-1}(1-tz)^{-p}\,dt,$$ where $B(\cdot ,\cdot)$ is the beta function. Note, that we can rewrite the above as
\begin{align}
    B(q,r-q) \, _2F_1(p,q;r;z) &= \int_1^\infty t^{p-r}(t-1)^{r-q-1}(t-z)^{-p}\,dt \label{rep1}\\ &= \int_0^\infty t^{r-q-1}(t+1)^{p-r}(t-z+1)^{-p}\,dt. \label{rep2}
\end{align}
We will also need the following easy result concerning beta function.
\begin{lemma}\label{beta}
    For $p \in (0,2)$ we have $\displaystyle B\left(p,1-\tfrac{p}{2}\right) > 1/p$.
\end{lemma}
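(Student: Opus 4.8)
The plan is to reduce everything to a single-variable estimate and exploit the classical reflection formula for the beta (equivalently gamma) function. Recall $B\left(p,1-\tfrac p2\right)=\dfrac{\Gamma(p)\,\Gamma\!\left(1-\tfrac p2\right)}{\Gamma\!\left(1+\tfrac p2\right)}$, so that, using $\Gamma(1+\tfrac p2)=\tfrac p2\,\Gamma(\tfrac p2)$, the claimed inequality $B\left(p,1-\tfrac p2\right)>1/p$ is equivalent to
\[
\frac{2}{p}\cdot\frac{\Gamma(p)\,\Gamma\!\left(1-\tfrac p2\right)}{\Gamma\!\left(\tfrac p2\right)}>\frac1p,
\qquad\text{i.e.}\qquad
2\,\Gamma(p)\,\Gamma\!\left(1-\tfrac p2\right)>\Gamma\!\left(\tfrac p2\right).
\]
Since $\Gamma(p)=\Gamma(1+p)/p$ and likewise one can shuffle factors, an equivalent and cleaner target is
\[
\Gamma\!\left(\tfrac p2\right)\,\Gamma\!\left(1-\tfrac p2\right)<2\,\Gamma(p)\cdot\frac{\Gamma\!\left(\tfrac p2\right)^2}{\Gamma(p)\cdot?}
\]
— rather than keep massaging, the concrete route I would take is: write $q:=p/2\in(0,1)$ and restate the inequality purely in terms of $q$, then apply the reflection formula $\Gamma(q)\Gamma(1-q)=\pi/\sin(\pi q)$ to whatever combination of $\Gamma(q)$, $\Gamma(1-q)$ appears, and the duplication formula $\Gamma(2q)=\dfrac{2^{2q-1}}{\sqrt\pi}\,\Gamma(q)\,\Gamma\!\left(q+\tfrac12\right)$ to handle $\Gamma(p)=\Gamma(2q)$. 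After these substitutions all $\Gamma$'s cancel or combine into $\pi/\sin(\pi q)$ and $\Gamma(q+\tfrac12)$, and the claim becomes an elementary inequality between $\sin$ and $\Gamma\!\left(q+\tfrac12\right)$ on $q\in(0,1)$, which can be checked by convexity/monotonicity (note $\Gamma(q+\tfrac12)\le\Gamma(\tfrac12)=\sqrt\pi$ is false in general on $(0,1)$, so one must be a little careful near $q\to1$, where $\Gamma(q+\tfrac12)\to\Gamma(\tfrac32)=\tfrac{\sqrt\pi}{2}$).

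An alternative, and probably shorter, approach avoids special-function identities altogether by using the integral representation $B\left(p,1-\tfrac p2\right)=\int_0^1 t^{p-1}(1-t)^{-p/2}\,dt$. First I would bound the integrand from below: on $(0,1)$ we have $(1-t)^{-p/2}>1$ for every $p>0$ and every $t\in(0,1)$, hence
\[
B\!\left(p,1-\tfrac p2\right)=\int_0^1 t^{p-1}(1-t)^{-p/2}\,dt>\int_0^1 t^{p-1}\,dt=\frac1p .
\]
This is in fact immediate and handles the whole range $p\in(0,2)$ in one line, because $1-p/2<0$ so the factor $(1-t)^{-p/2}$ is strictly greater than $1$ on all of $(0,1)$, and the strictness of the inequality on a set of positive measure makes the integral inequality strict; the convergence of $\int_0^1 t^{p-1}\,dt=1/p$ needs only $p>0$, and $\int_0^1(1-t)^{-p/2}\,dt$ converges since $-p/2>-1$.

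The only genuine obstacle is a non-issue here: one just has to observe that $1-\tfrac p2>0$ (so the beta function is well defined, $p<2$) while simultaneously $-\tfrac p2<0$ (so the second factor in the integrand exceeds $1$, $p>0$) — both hold precisely on the stated interval $p\in(0,2)$. I would therefore present the integral-representation argument as the proof, mentioning the gamma-function reformulation only if a purely algebraic proof is preferred. No deep input is needed; the lemma is exactly the kind of elementary fact that is cleanest via the Euler integral for $B$.
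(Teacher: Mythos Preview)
Your integral-representation argument is correct and is exactly the paper's proof: the paper simply writes $B(p,1-\tfrac p2)=\int_0^1 t^{p-1}(1-t)^{-p/2}\,dt>\int_0^1 t^{p-1}\,dt=1/p$. (Minor slip: where you write ``because $1-p/2<0$'' you mean $-p/2<0$, as you yourself note two lines later; and the unfinished gamma-function detour can be dropped entirely.)
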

\begin{proof}
    By definition we have $$B\left(p, 1-\tfrac{p}{2}\right) = \int_0^1 t^{p-1} (1-t)^{-p/2} \, dt > \int_0^1 t^{p-1} \, dt = \frac{1}{p}.$$
\end{proof}

\section{Bernoulli problem with one free boundary point}
\label{one_free_point} 

This section is devoted to the study of the interior Bernoulli problem for the fractional Laplacian for an interval with one free boundary point. 

For a fixed $a \in (0,1)$ let $w_a:\R \to \R$ be a (unique) measurable, bounded function, continuous on $(0,a)$, which satisfies the following Dirichlet problem:
\begin{equation*}
\left\{
\begin{aligned}
(-\Delta)^{\alpha/2}w_a(x)&=0 &&\text{for $x \in (0,a)$,}\\
w_a(x)&=0 &&\text{for $x \in [a,1)$,}\\
w_a(x)&=1 &&\text{for $x \in (0,1)^c$.}
\end{aligned}
\right.
\end{equation*}
Clearly, for each fixed $a \in (0,1)$ we have $w_a:  \R \to [0,1]$. By \eqref{Dirichlet_problem}, we have 
\begin{equation}
\label{wa_rep}
w_a(x) = \int_{[0,1]^c} P_{(0,a)}(x,y) w_a(y) \, dy, \quad \text{for $x \in (0,a)$.}
\end{equation}
By \eqref{Poisson_interval}, we get
\begin{equation}
\label{P0a}
P_{(0,a)}(x, y) = C_\alpha \left( \frac{(a-x)x}{y(y-a)} \right)^{\alpha/2} |x-y|^{-1}
\end{equation}
 for $x \in (0,a)$ and $y \in [0,a]^c$.

For $a \in (0,1)$ let us define
$$
R(a)  = \lim_{t \to 0^+} \frac{w_a(a - t) - w_a(a)}{t^{\alpha/2}}.
$$

Using (\ref{wa_rep}) and (\ref{P0a}), we obtain
    \begin{align*}
        R(a) 
        &= \lim_{t \to 0^+} \frac{1}{t^{\alpha/2}} \int_{[0,1]^c} P_{(0,a)}(a-t, z) \, dz\\
        &= \lim_{t \to 0^+} \frac{C_{\alpha}}{t^{\alpha/2}} \int_{[0,1]^c} \left( \frac{-t^2+at}{z^2-az}\right)^{\alpha/2} |a-t-z|^{-1} \, dz\\
        &=  C_{\alpha} a^{\alpha/2} \int_{[0,1]^c} (z^2-az)^{-\alpha/2} |a-z|^{-1} \, dz\\
        &= C_{\alpha} a^{\alpha/2} \left( \int_0^\infty z^{-\alpha/2}(z+a)^{-\alpha/2-1} \, dz+ \int_1^\infty z^{-\alpha/2}(z-a)^{-\alpha/2-1} \, dz \right).
    \end{align*}
By \eqref{rep2}, we get
\begin{align*}
     \int_0^\infty z^{-\alpha/2}(z+a)^{-\alpha/2-1} \, dz &= B\left(\alpha, 1-\tfrac{\alpha}{2}\right) \, _2F_1\left(\tfrac{\alpha}{2}+1, \alpha; \tfrac{\alpha}{2}+1; 1-a\right) \\ &= B\left(\alpha, 1-\tfrac{\alpha}{2}\right) a^{-\alpha}.
\end{align*}
Similarly, by \eqref{rep1}, we get
\begin{align*}
    \int_1^\infty z^{-\alpha/2}(z-a)^{-\alpha/2-1} \, dz &= B(\alpha, 1) \, _2F_1(\tfrac{\alpha}{2}+1, \alpha; \alpha+1; a) \\ 
		&= \alpha^{-1} \, _2F_1(\tfrac{\alpha}{2}+1, \alpha; \alpha+1; a).   
\end{align*}
For convenience, throughout the rest of this section we denote 
$$
F_\alpha(a) = \, _2F_1(\tfrac{\alpha}{2}+1, \alpha; \alpha+1; a).
$$ 
Recall that in the whole paper we use notation $T_{\alpha} = B(\alpha, 1-\frac{\alpha}{2})$. 

Using the above formulas we obtain
\begin{equation}\label{D(a)}
    R(a) = C_{\alpha}(T_{\alpha} a^{-\alpha/2} + a^{\alpha/2}\alpha^{-1} F_\alpha(a)).
\end{equation}

Now, fix $a \in (0,1)$ and put $u_a = 1 - w_a$, $K_a = (a,1)$. Let $n(a)$ be the outward unit normal vector to $K_a$ at $a$. Then we have
$$ 
D_n^{\alpha/2}u_a(a) = \lim_{t \to 0^+} \frac{u_a(a + t n(a)) - u_a(a)}{t^{\alpha/2}} = \lim_{t \to 0^+} \frac{u_a(a - t) - u_a(a)}{t^{\alpha/2}} = -R(a).
$$
It follows that for any fixed $a \in (0,1)$ the function $u_a$ and the set $K_a$ is a solution of Problem \ref{bernoulli_problem_1} for $D=(0,1)$ and $\lambda = R(a)$.

Now we will study properties of the function $(0,1) \ni a \mapsto R(a)$. These properties will allow to justify Theorem \ref{one_free_thm}.

\begin{proposition}\label{convexity}
    The function $(0,1) \ni a \mapsto R(a)$ is strictly convex.
\end{proposition}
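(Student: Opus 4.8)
The plan is to show that each of the two terms in the formula \eqref{D(a)} for $R(a)$, namely $a \mapsto T_\alpha a^{-\alpha/2}$ and $a \mapsto \alpha^{-1} a^{\alpha/2} F_\alpha(a)$, is a convex function of $a$ on $(0,1)$, and that at least one of them is strictly convex; since $C_\alpha>0$, strict convexity of $R$ then follows. The first term is immediate: $a^{-\alpha/2}$ has second derivative $\tfrac{\alpha}{2}(\tfrac{\alpha}{2}+1)a^{-\alpha/2-2}>0$, so it is already strictly convex. Hence it suffices to prove that $g(a) := a^{\alpha/2} F_\alpha(a)$ is convex on $(0,1)$.

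For $g$, the natural route is to use the integral representation \eqref{rep1}, which gave us
$$
\alpha^{-1} F_\alpha(a) = \int_1^\infty z^{-\alpha/2}(z-a)^{-\alpha/2-1}\,dz,
$$
so that, after the substitution (or just directly from the derivation of \eqref{D(a)}),
$$
g(a) = \alpha\, a^{\alpha/2}\int_1^\infty z^{-\alpha/2}(z-a)^{-\alpha/2-1}\,dz .
$$
A cleaner substitution $z = a/s$ turns this into $g(a) = \alpha\int_0^{a} s^{-\alpha/2}(1-s)^{-\alpha/2-1}\,ds$ up to constants — wait, one must be careful with the limits and the power of $a$; more robustly, write $z - a = a(z/a - 1)$ and substitute $t = a/z \in (0,a)$ to obtain an expression of the form $g(a) = c_\alpha \int_0^{a} \varphi(t)\,dt$ with $\varphi(t) = t^{-\alpha/2}(1-t)^{-\alpha/2-1}$ (for a suitable positive constant $c_\alpha$). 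Then $g'(a) = c_\alpha\,\varphi(a)$ and $g''(a) = c_\alpha\,\varphi'(a)$, and $\varphi$ is strictly increasing on $(0,1)$ since $\varphi(t) = t^{-\alpha/2}(1-t)^{-\alpha/2-1}$ is a product of two positive increasing factors on $(0,1)$; hence $g'' > 0$ and $g$ is strictly convex. Combining, $R$ is a positive combination of two strictly convex functions, so $R$ is strictly convex.

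The step I expect to require the most care is pinning down exactly the substitution that reduces $a^{\alpha/2}F_\alpha(a)$ to $\mathrm{const}\cdot\int_0^a \varphi$: the power of $a$ coming out of the change of variables must cancel the prefactor $a^{\alpha/2}$ precisely, and the constant $c_\alpha$ and the lower limit of integration must be tracked so that differentiation under the integral sign is legitimate (the integrand $\varphi(t)=t^{-\alpha/2}(1-t)^{-\alpha/2-1}$ is integrable near $t=0$ for $\alpha<2$, and the upper limit stays strictly below $1$, so there is no issue at $t=1$). An alternative, should the substitution prove awkward, is to differentiate the series $F_\alpha(a)=\sum_n \frac{(\alpha/2+1)_n(\alpha)_n}{(\alpha+1)_n n!}a^n$ term by term: all coefficients are positive, and one checks that the coefficients of $a^{\alpha/2}F_\alpha(a)$, after twice differentiating, remain nonnegative — but the fractional power $a^{\alpha/2}$ makes the term-by-term second derivative messier, so the integral representation is the preferred path.
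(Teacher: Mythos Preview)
Your plan has a fundamental gap: the second term $g(a)=a^{\alpha/2}F_\alpha(a)$ is \emph{not} convex on $(0,1)$, so the decomposition into two separately convex pieces cannot work. Indeed, $F_\alpha(0)=1$ and $F_\alpha$ is smooth at $0$, so near $a=0$ one has $g(a)\sim a^{\alpha/2}$, and $a^{\alpha/2}$ is strictly concave for $\alpha\in(0,2)$ (its second derivative is $\tfrac{\alpha}{2}(\tfrac{\alpha}{2}-1)a^{\alpha/2-2}<0$). Thus $g''(a)\to-\infty$ as $a\to0^+$. The convexity of $R$ genuinely requires the first term $T_\alpha a^{-\alpha/2}$ to compensate: this is exactly where the paper invokes Lemma~\ref{beta} ($T_\alpha>1/\alpha$) to show that the positive contribution $\tfrac{\alpha(\alpha+2)}{4}T_\alpha a^{-\alpha/2-2}$ beats the negative leading term of $g''$.

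Two subsidiary errors in your sketch confirm the issue. First, the substitution $t=a/z$ in $a^{\alpha/2}\int_1^\infty z^{-\alpha/2}(z-a)^{-\alpha/2-1}\,dz$ does not produce $c_\alpha\int_0^a\varphi(t)\,dt$: tracking the powers of $a$ leaves a prefactor $a^{-\alpha/2}$, i.e.\ one gets $a^{-\alpha/2}\int_0^a u^{\alpha-1}(1-u)^{-\alpha/2-1}\,du$, not a pure antiderivative. Second, even for your claimed $\varphi(t)=t^{-\alpha/2}(1-t)^{-\alpha/2-1}$, the factor $t^{-\alpha/2}$ is decreasing, not increasing, so the monotonicity argument for $\varphi$ fails as stated. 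The series alternative you mention at the end is actually the route the paper takes; the ``messiness'' you anticipate is precisely that the $n=0$ term after two derivatives is negative, and one must bound it using the other summand and Lemma~\ref{beta}.
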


\begin{proof}
By differentiating \eqref{D(a)} twice, we get
    \begin{equation*}
        \frac{d^2}{d a^2} R(a) = C_{\alpha} \left( \frac{\alpha(\alpha+2)}{4} \cdot T_{\alpha} a^{-\alpha/2-2} + 
				\alpha^{-1} \frac{d^2}{d a^2} (a^{\alpha/2}F_\alpha(a)) \right).
    \end{equation*}
Recall that
    \begin{align*}
        F_\alpha(a) = \sum_{n=0}^\infty \frac{(\tfrac{\alpha}{2}+1)_n (\alpha)_n}{(\alpha+1)_n} \frac{a^n}{n!} = \sum_{n=0}^\infty \frac{\alpha}{\alpha+n} \binom{\alpha/2+n}{n} a^n.
    \end{align*}
Hence
$$a^{\alpha/2}F_\alpha(a) = \sum_{n=0}^\infty \frac{\alpha}{\alpha+n} \binom{\alpha/2+n}{n} a^{\alpha/2+n}.$$
It follows that $\frac{d^2}{d a^2} (a^{\alpha/2}F_\alpha(a))$ is equal to
    \begin{align*}
         \frac{\alpha}{2}\left(\frac{\alpha}{2} -1\right) a^{\alpha/2-2} + \sum_{n=1}^\infty \frac{\alpha}{\alpha+n} \binom{\alpha/2+n}{n} \left( \frac{\alpha}{2}+n \right) \left( \frac{\alpha}{2} + n -1 \right) a^{\alpha/2+n-2}.
    \end{align*}
Hence
$$
\frac{d^2}{d a^2} (a^{\alpha/2}F_\alpha(a)) > \frac{\alpha}{2}\left(\frac{\alpha}{2} -1\right) a^{\alpha/2-2}.
$$
Using this and Lemma \ref{beta}, we finally obtain
    \begin{align*}
        \left(\frac{d^2}{d a^2} R(a) \right) 4a^{\alpha/2+2} C_{\alpha}^{-1} &= \alpha(\alpha+2) T_{\alpha} + \frac{4a^{\alpha/2+2}}{\alpha} \cdot \frac{d^2}{d a^2} (a^{\alpha/2}F_\alpha(a))\\ &> (\alpha+2) + (\alpha-2).
    \end{align*}
\end{proof}

\begin{proposition}\label{limits}
    We have $$\lim_{a \to 0^+} R(a) = \lim_{a \to 1^-} R(a) = \infty.$$
\end{proposition}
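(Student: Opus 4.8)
The plan is to read off both limits from the explicit formula \eqref{D(a)}, namely
$$
R(a) = C_{\alpha}\bigl(T_{\alpha} a^{-\alpha/2} + a^{\alpha/2}\alpha^{-1} F_\alpha(a)\bigr),
$$
in which $C_\alpha, T_\alpha > 0$, the function $F_\alpha$ is continuous and positive on $(0,1)$, and $F_\alpha(0^+) = 1$. Since both summands are nonnegative, each limit can be obtained by showing that one of the two terms blows up while the other stays under control.

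For $a \to 0^+$ this is immediate: $R(a) \ge C_\alpha T_\alpha a^{-\alpha/2}$, and $T_\alpha = B(\alpha,1-\tfrac{\alpha}{2}) > 0$ while $a^{-\alpha/2} \to \infty$; hence $R(a) \to \infty$. (Equivalently, the term $a^{\alpha/2}\alpha^{-1}F_\alpha(a)$ tends to $0$ by continuity of $F_\alpha$ at $0$, so the first term dominates.)

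For $a \to 1^-$ the first term $C_\alpha T_\alpha a^{-\alpha/2}$ converges to the finite value $C_\alpha T_\alpha$, so everything hinges on the second term; as $a^{\alpha/2} \to 1$, it suffices to prove $F_\alpha(a) \to \infty$. Here I would reuse the integral representation that already appeared in the derivation of \eqref{D(a)} via \eqref{rep1}, namely $\alpha^{-1}F_\alpha(a) = \int_1^\infty z^{-\alpha/2}(z-a)^{-\alpha/2-1}\,dz$. Bounding $z^{-\alpha/2} \ge 2^{-\alpha/2}$ on $(1,2)$ and computing $\int_1^2 (z-a)^{-\alpha/2-1}\,dz = \tfrac{2}{\alpha}\bigl((1-a)^{-\alpha/2} - (2-a)^{-\alpha/2}\bigr)$ gives
$$
\alpha^{-1}F_\alpha(a) \ge 2^{-\alpha/2}\,\tfrac{2}{\alpha}\bigl((1-a)^{-\alpha/2} - (2-a)^{-\alpha/2}\bigr) \longrightarrow \infty \quad (a \to 1^-),
$$
since $(1-a)^{-\alpha/2}\to\infty$ and $(2-a)^{-\alpha/2}\to 1$. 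This completes the argument.

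The only step requiring a little thought is the second limit; intuitively $F_\alpha(a)$ blows up because, as $a \to 1^-$, the (integrable) singularity of the integrand at $z=a$ slides into the endpoint $z=1$ of the domain of integration. One could instead argue from the series $F_\alpha(a) = \sum_{n\ge 0}\tfrac{\alpha}{\alpha+n}\binom{\alpha/2+n}{n}a^n$: by monotone convergence $\lim_{a\to1^-}F_\alpha(a) = \sum_{n\ge0}\tfrac{\alpha}{\alpha+n}\binom{\alpha/2+n}{n}$, and since $\binom{\alpha/2+n}{n}$ is comparable to $n^{\alpha/2}$ the terms behave like $c\,n^{\alpha/2-1}$ with $\alpha/2-1 > -1$, so the series diverges. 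I would present the integral estimate as the main route, since it needs no asymptotics of binomial coefficients and is essentially a one-line computation once \eqref{rep1} has been invoked, exactly as above.
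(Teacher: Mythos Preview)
Your proof is correct. The treatment of $a\to 0^+$ is essentially identical to the paper's. For $a\to 1^-$ you take a genuinely different route: both arguments reduce to showing $F_\alpha(a)\to\infty$, but the paper bounds the hypergeometric series from below term-by-term via $(\alpha/2+1)_n>n!$ to obtain
\[
F_\alpha(a)>\sum_{n\ge 0}\frac{\alpha}{\alpha+n}\,a^n,
\]
which diverges as $a\to 1^-$ by comparison with the harmonic series. You instead return to the integral representation \eqref{rep1} already used in deriving \eqref{D(a)} and extract the blow-up directly from $\int_1^2 (z-a)^{-\alpha/2-1}\,dz$. Your approach has the advantage of giving the explicit rate $F_\alpha(a)\gtrsim (1-a)^{-\alpha/2}$ with no series manipulation; the paper's approach is slightly softer but stays entirely within the power-series framework and needs no calculus beyond the term comparison. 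Your alternative series argument (via asymptotics of $\binom{\alpha/2+n}{n}$) is also valid but, as you note, less economical than either route.
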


\begin{proof}
The limit for $a \to 0^+$ immediately follows from \eqref{D(a)}, continuity of $F_\alpha(a)$ in $a=1$ and the fact that $F_\alpha(0) = 1$.
By the inequality $(\alpha/2+1)_n > (1)_n = n!$, we get
\begin{align*}
    F_\alpha(a) = \sum_{n=0}^\infty \frac{(\alpha/2+1)_n (\alpha)_n}{(\alpha+1)_n} \frac{a^n}{n!} > \sum_{n=0}^\infty \frac{(\alpha)_n}{(\alpha+1)_n} a^n = \sum_{n=0}^{\infty} \frac{\alpha}{\alpha+n} a^n.
\end{align*}
From this inequality we obtain $$\lim_{a \to 1^-} F_\alpha(a) = \infty.$$
Therefore
$$\lim_{a \to 1^-} R(a) = C_{\alpha} \left( T_{\alpha} + \alpha^{-1} \lim_{a \to 1^-} F_\alpha(a) \right)=\infty.$$
\end{proof}

\begin{lemma}
\label{scaling_1}
Assume that $(u,K)$ is a solution of Problem \ref{bernoulli_problem_1} for $D = (-r,r)$ and $\lambda >0$, where $K = (a,r)$, $r > 0$ and $a \in D$. Let $s > 0$. Put $D_s = (-sr,sr)$, $K_s = (sa,sr)$ and define $u_s:\R \to [0,1]$ by $u_s(x) = u(x/s)$. Then $(u_s,K_s)$ is a solution of Problem \ref{bernoulli_problem_1} for $D_s$ and $s^{-\alpha/2} \lambda$.
\end{lemma}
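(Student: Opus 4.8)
The plan is a direct change-of-variables argument resting on the scaling property of $(-\Delta)^{\alpha/2}$. First I would record that for $s>0$ and $v(x)=u(x/s)$ one has $(-\Delta)^{\alpha/2}v(x)=s^{-\alpha}\bigl((-\Delta)^{\alpha/2}u\bigr)(x/s)$; this follows at once from the singular-integral definition of the fractional Laplacian by the substitution $z\mapsto sz$ in the defining integral. Applying this with $v=u_s$ and using that $x\in D_s\setminus\overline{K_s}$ if and only if $x/s\in D\setminus\overline{K}$, the equation $(-\Delta)^{\alpha/2}u_s=0$ on $D_s\setminus\overline{K_s}$ reduces to $(-\Delta)^{\alpha/2}u=0$ on $D\setminus\overline{K}$, which holds by hypothesis.

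Next I would verify the remaining pointwise conditions, all of which are immediate from $u_s(x)=u(x/s)$: we have $u_s(x)=1$ exactly when $x/s\in\overline{K}\setminus\{r\}$, i.e.\ $x\in\overline{K_s}\setminus\{sr\}$; $u_s(x)=0$ exactly when $x/s\in D^c$, i.e.\ $x\in D_s^c$; $u_s$ takes values in $[0,1]$ and is Borel measurable; and, as the composition of $u$ with the homeomorphism $x\mapsto x/s$, it is continuous on $D_s$. Also $K_s=(sa,sr)\subset(-sr,sr)=D_s$ since $a\in(-r,r)$.

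Finally I would transform the free-boundary condition. The key small observation is that the outward unit normal $n$ to $K_s$ at $sa$ coincides with the outward unit normal to $K$ at $a$ — in both cases it equals $-1$, since each set lies to the right of its left endpoint — so no spurious sign is introduced. Then, with the substitution $\tau=t/s$,
\begin{align*}
D_n^{\alpha/2}u_s(sa)&=\lim_{t\to0^+}\frac{u_s(sa+tn)-u_s(sa)}{t^{\alpha/2}}=\lim_{t\to0^+}\frac{u(a+(t/s)n)-u(a)}{t^{\alpha/2}}\\
&=s^{-\alpha/2}\lim_{\tau\to0^+}\frac{u(a+\tau n)-u(a)}{\tau^{\alpha/2}}=s^{-\alpha/2}D_n^{\alpha/2}u(a)=-s^{-\alpha/2}\lambda,
\end{align*}
which is precisely the free-boundary condition for Problem \ref{bernoulli_problem_1} with parameter $s^{-\alpha/2}\lambda$. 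Hence $(u_s,K_s)$ solves Problem \ref{bernoulli_problem_1} for $D_s$ and $s^{-\alpha/2}\lambda$.

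There is no genuine obstacle here; the only two points that merit a line of care are the scaling identity for $(-\Delta)^{\alpha/2}$ (a one-line change of variables) and the remark that rescaling does not alter the outward normal direction at the left endpoint, which is what keeps the sign correct in the last display.
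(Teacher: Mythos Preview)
Your proof is correct. The boundary conditions and the free-boundary derivative are handled exactly as in the paper; the only methodological difference is how you establish $\alpha$-harmonicity of $u_s$ on $D_s\setminus\overline{K_s}$. You invoke the operator scaling identity $(-\Delta)^{\alpha/2}[u(\cdot/s)](x)=s^{-\alpha}\bigl((-\Delta)^{\alpha/2}u\bigr)(x/s)$, obtained by the substitution $z\mapsto sz$ in the singular integral. The paper instead works through the Poisson representation: it writes $u(x)=\int_K P_W(x,z)\,dz$ on $W=D\setminus\overline{K}$, applies the known scaling of the Poisson kernel $P_W(x/s,z)=s\,P_{sW}(x,sz)$ from \cite[(1.62)]{BBKRSV2009}, and deduces that $u_s$ has the analogous Poisson representation on $sW$, hence is $\alpha$-harmonic there. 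Your route is shorter and self-contained; the paper's route stays within the potential-theoretic framework (Poisson kernels, mean-value formulas) that it has set up in the preliminaries and will continue to use, and avoids any question of whether the pointwise principal-value limit is the operative notion of $\alpha$-harmonicity.
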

\begin{proof}
For $x \in \overline{K_s} = s \overline{K}$ we have $x/s \in \overline{K}$, so $u_s(x) = u(x/s) = 1$. Similarly, for $x \in (D_s)^c = s D^c$ we have $x/s \in D^c$, so $u_s(x) = u(x/s) = 0$. We also have
$$
D_n^{\alpha/2} u_s(s a) = \lim_{t \to 0^+} \frac{u_s(s a - t) - u_s(s a)}{t^{\alpha/2}} =
\lim_{t \to 0^+} \frac{u\left(a - t/s\right) - u(a)}{(t/s)^{\alpha/2}} \frac{1}{s^{\alpha/2}} = -\frac{\lambda}{s^{\alpha/2}}.
$$

Put $W = D \setminus \overline{K}$. By \eqref{Dirichlet_problem}, we obtain for $x \in W$ 
\begin{equation}
\label{u_formula_1}
u(x) = \int_{(\overline{W})^c} P_W(x,z) u(z) \, dz = \int_{K} P_W(x,z) \, dz.
\end{equation}
Note that for $x \in D_s \setminus \overline{K_s} = s W$ we have $x/s \in W$. Using this, \cite[(1.62)]{BBKRSV2009} and (\ref{u_formula_1}), we obtain for $x \in D_s \setminus \overline{K_s}$ 
$$
u_s(x) = u\left(\frac{x}{s}\right) = \int_{K} P_W\left(\frac{x}{s},z\right) \, dz = \int_{K} s P_{sW}(x,sz) \, dz.
$$
By substitution $y = sz$, this is equal to
$$
\int_{K_s} P_{sW}(x,y) \, dy = \int_{(\overline{sW})^c} P_{sW}(x,y) u_s(y) \, dy.
$$
Hence $(-\Delta)^{\alpha/2} u_s(x) = 0$ for $x \in D_s \setminus \overline{K_s}$, which finishes the proof.
\end{proof}

By the definition of the fractional Laplacian one easily obtains the following result.
\begin{lemma}
\label{translation_1}
Assume that $(u,K)$ is a solution of Problem \ref{bernoulli_problem_1} for $D = (x_0-r,x_0+r)$ and $\lambda >0$, where $K = (a,x_0+r)$, $x_0 \in \R$, $r > 0$ and $a \in D$. Let $y_0 \in \R$. Put $D_* = (x_0+y_0-r, x_0+y_0+r)$, $K_* = (a+y_0,x_0+y_0)$ and define $u_*:\R \to [0,1]$ by $u_*(x) = u(x - y_0)$. Then $(u_*,K_*)$ is a solution of Problem \ref{bernoulli_problem_1} for $D_*$ and $\lambda$.
\end{lemma}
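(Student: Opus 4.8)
The plan is to use the translation invariance of the fractional Laplacian and then check, one at a time, the four defining conditions of Problem \ref{bernoulli_problem_1}. First I would record the elementary fact underlying everything: for a bounded measurable function $u:\R\to\R$ that is continuous on an open set $V\subset\R$, and for $y_0\in\R$, the function $u_*(x)=u(x-y_0)$ is continuous on $V+y_0$ and satisfies
$$
(-\Delta)^{\alpha/2}u_*(x) = \bigl((-\Delta)^{\alpha/2}u\bigr)(x-y_0), \qquad x\in V+y_0.
$$
This is immediate from the principal-value integral in the definition of $(-\Delta)^{\alpha/2}$: writing that integral out for $u_*$ at the point $x$ and using $u_*(x+z)-u_*(x)=u\bigl((x-y_0)+z\bigr)-u(x-y_0)$, neither the integrand nor the truncation region $\{|z|>r\}$ changes, so the limit equals $(-\Delta)^{\alpha/2}u$ evaluated at $x-y_0$.

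Next I would translate the data. With $D_*=D+y_0$, $K_*=K+y_0=(a+y_0,x_0+y_0+r)$ and $u_*=u(\cdot-y_0)$, one has $x\in D_*\setminus\overline{K_*}$ iff $x-y_0\in D\setminus\overline{K}$, $x\in\overline{K_*}\setminus\{x_0+y_0+r\}$ iff $x-y_0\in\overline{K}\setminus\{x_0+r\}$, and $x\in(D_*)^c$ iff $x-y_0\in D^c$. Combining these equivalences with the displayed identity (applied with $V=D\setminus\overline{K}$) gives $(-\Delta)^{\alpha/2}u_*(x)=\bigl((-\Delta)^{\alpha/2}u\bigr)(x-y_0)=0$ for $x\in D_*\setminus\overline{K_*}$, $u_*(x)=u(x-y_0)=1$ for $x\in\overline{K_*}\setminus\{x_0+y_0+r\}$, and $u_*(x)=u(x-y_0)=0$ for $x\in(D_*)^c$; moreover $u_*$ is Borel measurable on $\R$ and continuous on $D_*$ because $u$ has these properties.

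Finally I would check the free boundary condition. The outward unit normal to $K_*$ at the point $a+y_0$ is the same vector (namely $-1$) as the outward unit normal to $K$ at $a$, so by the definition \eqref{generalized_normal_derivative},
$$
D_n^{\alpha/2}u_*(a+y_0)=\lim_{t\to 0^+}\frac{u_*(a+y_0-t)-u_*(a+y_0)}{t^{\alpha/2}}=\lim_{t\to 0^+}\frac{u(a-t)-u(a)}{t^{\alpha/2}}=D_n^{\alpha/2}u(a)=-\lambda.
$$
This exhausts the requirements, so $(u_*,K_*)$ solves Problem \ref{bernoulli_problem_1} for $D_*$ and $\lambda$. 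There is no genuine obstacle here: the only point requiring any care is the translation invariance of the principal-value integral defining $(-\Delta)^{\alpha/2}$, and once that is observed the rest is bookkeeping. (The same remark explains why $\lambda$ is unchanged, in contrast with the scaling Lemma \ref{scaling_1}, where the factor $t^{\alpha/2}$ in the generalized normal derivative produces the $s^{-\alpha/2}$.)
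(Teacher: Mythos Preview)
Your proof is correct and follows exactly the approach the paper indicates: the paper does not write out a proof for this lemma, stating only that it follows easily from the definition of the fractional Laplacian, and that is precisely what you do (noting translation invariance of the principal-value integral and then checking the four conditions and the regularity). Your silent correction $K_*=(a+y_0,\,x_0+y_0+r)$ is the intended translate $K+y_0$; the formula $K_*=(a+y_0,\,x_0+y_0)$ in the statement is a typo.
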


\begin{proof}[Proof of Theorem \ref{one_free_thm}]
Recall that for any fixed $a \in (0,1)$ the function $u = u_a = 1- w_a$ and $K = K_a = (a,1)$ is a solution of Problem \ref{bernoulli_problem_1} for $D=(0,1)$ and $\lambda = R(a)$. By (\ref{D(a)}), the function $(0,1) \ni a \mapsto  R(a)$ is positive and continuous. Using this and Propositions \ref{convexity}, \ref{limits}, we obtain the assertion of the theorem for $D=(0,1)$. The assertion for arbitrary $D$ follows from Lemmas \ref{scaling_1} and \ref{translation_1}.
\end{proof}

\begin{proof}[Proof of Theorem \ref{one_free_thm_1}] We first show the assertion for $D = (0,1)$. We assume that $a \in (0,1)$. Recall that we denote $u_a = 1 - w_a$. By (\ref{wa_rep}) and (\ref{P0a}), we get for $x \in (0,a)$
\begin{align}
\nonumber
u_a(x) &= \int_a^1 P_{(0,a)}(x,z) \, dz\\
\nonumber
&= \frac{1}{\pi} \int_a^1 \frac{(a-x)^{1/2} x^{1/2}}{(z-a)^{1/2}z^{1/2} (z-x)} \, dz\\
\label{ua}
&= \frac{2}{\pi} \arctan\left(\frac{x^{1/2} (1-a)^{1/2}}{(a-x)^{1/2}}\right).
\end{align}
By (\ref{D(a)}), we obtain
$$
-D_n^{1/2}u_a(a) = D_n^{1/2}w_a(a) = C_{1}(T_{1} a^{-1/2} + a^{1/2} F_1(a)).
$$
Note that we have
$$
F_1(a) = \, _2F_1(3/2,1;2;a) = \frac{2 - 2 \sqrt{1-a}}{a \sqrt{1-a}}.
$$
Hence we obtain
$$
-D_n^{1/2}u_a(a) = \frac{2}{\pi \sqrt{a} \sqrt{1-a}}.
$$
We have
$$
\frac{d}{da} \left(\frac{2}{\pi \sqrt{a} \sqrt{1-a}}\right) = \frac{2a - 1}{\pi a^{3/2} (1-a)^{3/2}}.
$$
Thus, the minimum of the function $(0,1) \ni a \mapsto -D_n^{1/2}u_a(a)$ is obtained for $a = 1/2$ and it is equal to $4/\pi$.
Therefore,
\begin{equation}\label{mu_D1}
    \mu_{1,(0,1)} = \frac{4}{\pi}.
\end{equation}
For $\lambda = \mu_{1,(0,1)}$ the unique solution is given by $K = (1/2, 1)$ and $ u = u_{1/2}$ (given by \eqref{ua}). For any $\lambda > \mu_{1,(0,1)}$ we have $-D_n^{1/2}u_a(a) = \lambda$ if and only if
$$
\frac{2}{\pi \sqrt{a} \sqrt{1-a}} = \lambda.
$$
The above can be reduced to the following quadratic equation:
$$
a^2 - a + \frac{\mu_{1,(0,1)}^2}{4 \lambda^2} = 0.
$$
Since we have chosen $\lambda > \mu_{1,(0,1)}$, it has exactly two solutions, given by 
$$
a_1 = \frac{1 + \sqrt{1 - \mu_{1,(0,1)}^2/\lambda^2}}{2}, \quad a_2 = \frac{1 - \sqrt{1 - \mu_{1,(0,1)}^2/\lambda^2}}{2}.
$$
This implies that Problem \ref{bernoulli_problem_1} has exactly two solutions. The first solution is $K = (a_1,1)$ and $u = u_{a_1}$. The second solution is $K = (a_2,1)$ and $u = u_{a_2}$. Functions $u_{a_1}$, $u_{a_2}$ are given by \eqref{ua}. This gives the assertion of the theorem for $D = (0,1)$. 

The assertion for arbitrary $D$ follows from Lemmas \ref{scaling_1} and \ref{translation_1}.
\end{proof}

\section{Proofs of main results}
\label{proof_main_results}

In this section we present proofs of Theorems \ref{alpha02} and \ref{estimates_lambda}. 

\begin{proposition}
\label{symmetry}
Let $x_0 \in \R$, $r > 0$, $D = (x_0-r,x_0+r)$ and $\lambda > 0$. Assume that $(u,K)$ is a solution of Problem \ref{bernoulli_problem} for $D$ and $\lambda$. Then $K$ is symmetric with respect to $x_0$.
\end{proposition}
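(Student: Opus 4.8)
The plan is to exploit the uniqueness of the solution to the outer Dirichlet problem together with a reflection symmetry of the fractional Laplacian and of the configuration $D$. Let $T\colon\R\to\R$ be the reflection $T(x)=2x_0-x$, so that $T(D)=D$. Suppose $(u,K)$ solves Problem~\ref{bernoulli_problem}. I would first argue that $K$ must be a single interval: since $K$ is a domain (connected open set) of class $C^1$ contained in the interval $D\subset\R$, the only possibility is $K=(a,b)$ with $x_0-r\le a<b\le x_0+r$. In fact $a>x_0-r$ and $b<x_0+r$, because $u$ is continuous on $\R$ and equals $0$ on $D^c$ and $1$ on $\overline K$, so $\overline K$ cannot touch $\partial D$. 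Hence $W:=D\setminus\overline K$ is a union of at most two open intervals, $W\in\calG$, and by \eqref{Dirichlet_problem} the function $u$ restricted to $W$ is the unique bounded solution of the outer Dirichlet problem on $W$ with boundary data $g:=\mathbf 1_{\overline K}$ (which is $1$ on $\overline K$ and $0$ on $D^c$).

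Next I would define $\tilde u(x)=u(T(x))$ and $\tilde K=T(K)=(2x_0-b,2x_0-a)$. Because $(-\Delta)^{\alpha/2}$ commutes with the isometry $T$ (this is immediate from the singular-integral definition, the kernel $|z|^{-1-\alpha}$ being reflection-invariant), and because $T(D)=D$, the pair $(\tilde u,\tilde K)$ is again a solution of Problem~\ref{bernoulli_problem} for the same $D$ and the same $\lambda$; in particular $\tilde u=0$ on $D^c$, $\tilde u=1$ on $\overline{\tilde K}$, $(-\Delta)^{\alpha/2}\tilde u=0$ on $D\setminus\overline{\tilde K}$, and $D_n^{\alpha/2}\tilde u=-\lambda$ on $\partial\tilde K$. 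The symmetry claim $T(K)=K$, i.e. $\tilde K=K$, is what must be shown; equivalently $a+b=2x_0$.

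The key step — and the main obstacle — is to rule out $\tilde K\ne K$. Here is how I would do it. Suppose for contradiction that $\tilde K\ne K$; without loss of generality the right endpoint of $\tilde K$ lies strictly to the right of the right endpoint of $K$ (otherwise swap the roles of $u$ and $\tilde u$), i.e. $2x_0-a>b$, so $a+b<2x_0$. Consider the function $v=u-\tilde u$ on $\R$. On $D^c$ we have $v=0$; on $\overline K\cap\overline{\tilde K}$ we have $v=0$; and $v\ge 0$ on $\overline K$ while $v\le 0$ on $\overline{\tilde K}$, since $u$ attains its maximum value $1$ on $\overline K$ and $\tilde u$ attains its maximum value $1$ on $\overline{\tilde K}$ (both functions take values in $[0,1]$). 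Now I would compare the two ``obstacle-type'' configurations. Because $\tilde K$ extends to the right of $K$, on the overlap region $D\setminus(\overline K\cup\overline{\tilde K})$ both $u$ and $\tilde u$ are $(-\Delta)^{\alpha/2}$-harmonic, and I would use the maximum principle for $(-\Delta)^{\alpha/2}$ (applied to $v$ on $D\setminus(\overline K\cup\overline{\tilde K})\in\calG$, via the mean-value property \eqref{reg}) together with the sign information on $\overline K\setminus\overline{\tilde K}$ ($v\ge 0$) and on $\overline{\tilde K}\setminus\overline K$ ($v\le 0$) and on $D^c$ ($v=0$) to locate where $v$ can be positive or negative. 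A cleaner route: monotone dependence of the solution of the outer Dirichlet problem on the set where the data is forced to be $1$. Since $u$ solves the Dirichlet problem with data $1$ on $\overline K$ and $0$ elsewhere on $D^c\cup\overline K$-type set, and $\tilde u$ the same with $\overline{\tilde K}$ in place of $\overline K$, and these sets are not nested, I would instead compare both with the solution $u_{K\cup\tilde K}$ for the ``larger'' obstacle $\overline K\cup\overline{\tilde K}$: by the Dirichlet-problem comparison (positivity of the Poisson kernel) one gets $u\le u_{K\cup\tilde K}$ and $\tilde u\le u_{K\cup\tilde K}$ on $D$, with strict inequality on $D\setminus(\overline K\cup\overline{\tilde K})$. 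The decisive point is then the boundary condition $D_n^{\alpha/2}u=-\lambda$ at the right endpoint $b$ of $K$: approaching $b$ from inside $W$, the presence of the ``extra mass'' of $\tilde K$ further to the right forces, by a strict comparison of the generalized normal derivatives (the Poisson-kernel representation \eqref{u_formula_1} shows $D_n^{\alpha/2}u(b)$ is a strictly monotone functional of the obstacle on the far side), the value $D_n^{\alpha/2}u(b)$ to be strictly larger in absolute value than what the symmetric configuration would give, and symmetrically at the left endpoint of $\tilde K$ it is strictly smaller; but both must equal $-\lambda$, a contradiction. I expect the technical heart of the argument to be making this strict monotonicity of $D_n^{\alpha/2}$ with respect to the obstacle rigorous — comparing $R(\cdot)$-type quantities for the interval $W$ against those for the genuinely smaller interval one would get after reflection — using \eqref{u_formula_1}, \eqref{Poisson_interval} and the Green-function monotonicity \eqref{Green_comparability}. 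Once $a+b=2x_0$ is forced, $K=(a,2x_0-a)$ is symmetric with respect to $x_0$, which is the assertion.
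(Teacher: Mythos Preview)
Your setup is fine: $K$ must be an interval $(a,b)\Subset D$, the reflection $T$ about $x_0$ preserves $D$, and $(\tilde u,\tilde K)=(u\circ T,T(K))$ is again a solution of Problem~\ref{bernoulli_problem} for the same $D$ and $\lambda$. The gap is in what you do with this. Having two solutions with distinct obstacle sets is \emph{not} a contradiction: indeed Theorem~\ref{alpha02} itself asserts that for $\lambda>\lambda_{\alpha,D}$ there are at least two solutions, and the paper expects exactly two (Conjecture~\ref{interval_number_of_solutions}). So the reflection alone buys nothing. Your attempt to extract a contradiction from comparison is where the argument breaks down: $K$ and $\tilde K$ have the same length and are translates of one another, hence never nested when $\tilde K\neq K$, so neither $u\le\tilde u$ nor $u\ge\tilde u$ holds on $D$, and passing to $u_{K\cup\tilde K}$ gives one-sided bounds for both functions but no relation between $D_n^{\alpha/2}u(b)$ and $D_n^{\alpha/2}u(a)$. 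The sentence ``$D_n^{\alpha/2}u(b)$ is a strictly monotone functional of the obstacle on the far side'' is the entire content of the proposition, and you have not proved it; the references you cite (\eqref{Poisson_interval}, \eqref{Green_comparability}) do not yield it, because the relevant harmonic domain $W=D\setminus\overline K$ is a union of two intervals with no explicit Poisson kernel.

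The paper's route is genuinely different and supplies exactly the missing quantitative ingredient. Instead of reflecting about $x_0$, it reflects about the midpoint of $K$ (so $K=(-b,b)$ after translation) and restricts to a symmetric subdomain $U=(-w_1,-b)\cup(b,w_1)\subset W$. Via \eqref{reg} and \eqref{Ikeda_Watanabe} one writes $u(x)=\int_U G_U(x,y)h(y)\,dy$; the asymmetry of $D$ about the midpoint of $K$ makes $h(y)-h(-y)>0$ on $(b,w_1)$. The decisive technical step is Lemma~\ref{Green_estimates_2}, a pointwise lower bound $G_U(x,y)-G_U(-x,y)\ge c_*\delta_U^{\alpha/2}(x)$ for $x$ near $b$ and suitable $y$, obtained from the explicit interior Green-function estimates on the single interval $(b,w_1)$ combined with the crude upper bound of Lemma~\ref{Green_estimates_1}. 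Dividing $u(x)-u(-x)$ by $\delta_U^{\alpha/2}(x)$ and letting $x\to b^+$ then gives $D_n^{\alpha/2}u(b)-D_n^{\alpha/2}u(a)>0$, contradicting $-\lambda+\lambda=0$. This is precisely the ``strict monotonicity of $D_n^{\alpha/2}$'' you were hoping for, but it requires the Green-function asymmetry estimates rather than soft comparison.
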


Before we prove this proposition, we need some estimates of the Green function corresponding to the fractional Laplacian. 

\begin{lemma}
\label{Green_estimates_1}
Fix $0 < b < w$. Put $U = (-w,-b) \cup (b,w)$. There exists $c_* > 0$ such that for any $x, y \in (b,w)$ we have $G_U(-x,y) \le c_* \delta_U^{\alpha/2}(x)$. The constant $c_*$ depends on $\alpha$, $b$, $w$.
\end{lemma}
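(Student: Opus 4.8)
The plan is to bound $G_U(-x,y)$ for $x,y\in(b,w)$ by comparing with the Green function of a larger, simpler set and then exploiting the fact that $-x$ and $y$ lie in different connected components of $U$, so that $-x$ sits at positive distance from the component $(b,w)$ containing $y$. First I would use the monotonicity \eqref{Green_comparability}: since $U\subset I:=(-w,w)$, we have $G_U(-x,y)\le G_I(-x,y)$, but this alone is not enough because $G_I$ blows up on the diagonal — however here $x,y>b$, so $-x<-b<0<b<y$ and $|-x-y|=x+y\ge 2b>0$, keeping us uniformly away from the diagonal. The key structural input is the mean value property \eqref{Green_mean} for the Green function: fix $y\in(b,w)$, let $V=(-w,-b)$, which is a component of $U$ with $\dist(V,y)>0$; then for $-x\in V$,
$$
G_U(-x,y)=\int_{(\overline V)^c} P_{V}(-x,z)\,G_U(z,y)\,dz .
$$
Since $G_U(z,y)=0$ for $z\in U^c$, the integral runs only over $z\in(b,w)$ (the other component), and there $G_U(z,y)\le G_I(z,y)$ is bounded by a constant $M=M(\alpha,b,w)$ depending only on the fixed data, because $z,y$ range over $(b,w)$ at distance $\ge 2b$ from $-x$... wait, $z,y$ are both in $(b,w)$ so they can be close; instead bound $\int_b^w G_I(z,y)\,dz\le \sup_{y}\int_I G_I(z,y)\,dz =: M'<\infty$, which is finite since the torsion function of a bounded interval is bounded. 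Thus
$$
G_U(-x,y)\le \Big(\sup_{z\in(b,w)} P_{V}(-x,z)\Big)\, M' .
$$

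Next I would estimate the Poisson kernel $P_V(-x,z)$ for $-x\in V=(-w,-b)$ and $z\in(b,w)$, using the explicit formula \eqref{Poisson_interval}: with the interval $(-w,-b)$,
$$
P_{V}(-x,z)=C_\alpha\,\frac{\big((-x+w)(-b-(-x))\big)^{\alpha/2}}{\big((z+w)(z+b)\big)^{\alpha/2}}\,\frac{1}{|{-x}-z|}
=C_\alpha\,\frac{\big((w-x)(x-b)\big)^{\alpha/2}}{\big((z+w)(z+b)\big)^{\alpha/2}\,(x+z)} .
$$
For $z\in(b,w)$ the denominator factors $(z+w)^{\alpha/2}(z+b)^{\alpha/2}(x+z)$ are all bounded below by positive constants depending only on $b,w$ (namely $(2b)^{\alpha/2}(2b)^{\alpha/2}(2b)$), and in the numerator $(w-x)^{\alpha/2}\le (w-b)^{\alpha/2}$ is bounded while the remaining factor $(x-b)^{\alpha/2}$ is exactly comparable to $\delta_U^{\alpha/2}(x)$ near the endpoint $b$. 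More precisely $\delta_U(x)=\dist(x,U^c)=\min(x-b,w-x)\le x-b$, so $(x-b)^{\alpha/2}\ge \delta_U^{\alpha/2}(x)$, which is the wrong direction; but I actually want an upper bound on $P_V(-x,z)$ in terms of $\delta_U(x)$, so I should instead note $\delta_U(x)\ge c\,\min(x-b,w-x)$ and that near $x=w$ the factor $(x-b)^{\alpha/2}$ stays bounded while $(w-x)^{\alpha/2}$ handles that endpoint — combining, $P_V(-x,z)\le c\,(w-x)^{\alpha/2}$, and together with the trivial bound $P_V(-x,z)\le c$ we get $P_V(-x,z)\le c\,\big((w-x)(x-b)\big)^{\alpha/2}/(\text{const})\le c\,\delta_U^{\alpha/2}(x)$ after absorbing the bounded factor. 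Assembling, $G_U(-x,y)\le c_*\,\delta_U^{\alpha/2}(x)$ with $c_*=c_*(\alpha,b,w)$.

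The main obstacle is keeping the right factor of $\delta_U^{\alpha/2}(x)$: the numerator of the Poisson kernel is $\big((w-x)(x-b)\big)^{\alpha/2}$, and while $(x-b)^{\alpha/2}\le (w-b)^{\alpha/2}$ and $(w-x)^{\alpha/2}\le (w-b)^{\alpha/2}$ each give a crude constant bound, producing the \emph{sharp} decay $\delta_U^{\alpha/2}(x)=\big(\min(x-b,w-x)\big)^{\alpha/2}$ requires the elementary observation that $\big((w-x)(x-b)\big)^{\alpha/2}\le (w-b)^{\alpha/2}\big(\min(w-x,x-b)\big)^{\alpha/2}=(w-b)^{\alpha/2}\delta_U^{\alpha/2}(x)$, since the product of two numbers is at most their max (here $\le w-b$) times their min. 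With this identity the estimate closes cleanly, and every remaining factor in $P_V(-x,z)$ for $z\in(b,w)$ is trapped between positive constants depending only on $\alpha,b,w$, so the integral over $z$ contributes only a finite multiplicative constant and the proof is complete.
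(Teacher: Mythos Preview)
Your proposal is correct and follows essentially the same route as the paper: apply the mean value property \eqref{Green_mean} for $G_U(\cdot,y)$ on the component $V=(-w,-b)$, reduce the integral to $z\in(b,w)$, bound $\int_b^w G_U(z,y)\,dz$ by a constant (the paper does this directly, you pass through $G_I$ via \eqref{Green_comparability} and the torsion bound), and extract the factor $\delta_U^{\alpha/2}(x)$ from the explicit Poisson kernel \eqref{Poisson_interval}. The paper's write-up is simply more compressed, observing at once that the numerator $((w-x)(x-b))^{\alpha/2}$ is bounded by $c\,\delta_{(-w,-b)}^{\alpha/2}(-x)=c\,\delta_U^{\alpha/2}(x)$, which is exactly your $\max\cdot\min$ observation in disguise.
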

\begin{proof}
By (\ref{Green_mean}) and (\ref{Poisson_interval}), for any $x, y \in (b,w)$ we have 
\begin{align*}
G_U(-x,y) &= \int_b^w P_{(-w,-b)}(-x,z) G_U(z,y) \, dz\\
&= C_{\alpha} \int_b^w \frac{((-x-(-w))(-b-(-x)))^{\alpha/2}}{((z-(-w))(z-(-b)))^{\alpha/2}} \frac{G_U(z,y)}{|-x-z|} \, dz\\
&\le c \delta_{(-w,-b)}^{\alpha/2}(-x) \int_b^w G_U(z,y) \, dz\\
&\le c_* \delta_{U}^{\alpha/2}(x),
\end{align*}
where $c$ depends on $\alpha$, $b$, $w$.
\end{proof}

\begin{lemma}
\label{Green_estimates_2}
Fix $0 < b < w$ and let $c_*$ be the constant from Lemma \ref{Green_estimates_1}. Put $U = (-w,-b) \cup (b,w)$. There exist $t \in (0,(w-b)/8)$ (depending on $\alpha$, $b$, $w$, $c_*$) such that for any $x \in (b, b+t)$ and $y \in (b+3t,b+4t)$ we have 
$$
G_U(x,y) - G_U(-x,y) \ge c_* \delta_U^{\alpha/2}(x).
$$
\end{lemma}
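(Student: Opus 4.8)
The plan is to prove the stronger pointwise lower bound
$$
G_U(x,y) \ge 2c_* \,\delta_U^{\alpha/2}(x) \qquad \text{for } x \in (b,b+t),\ y \in (b+3t,b+4t),
$$
which, combined with Lemma \ref{Green_estimates_1} (that gives $G_U(-x,y) \le c_*\,\delta_U^{\alpha/2}(x)$), immediately yields $G_U(x,y)-G_U(-x,y) \ge c_*\,\delta_U^{\alpha/2}(x)$. Here, once $t < (w-b)/8$, one checks at once that $\delta_U(x) = x-b$ for $x \in (b,b+t)$, since the distance from such $x$ to $[w,\infty)$ and to $(-\infty,-w]$ exceeds $x-b$.

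To bound $G_U(x,y)$ from below I would first reduce to a fixed domain. Since $(b,w) \subset U$ and, after translating by $-b$, the interval $(0,8t)$ is contained in $(0,w-b)$ because $8t < w-b$, monotonicity of the Green function \eqref{Green_comparability} gives
$$
G_U(x,y) \ge G_{(0,w-b)}(x-b,y-b) \ge G_{(0,8t)}(x-b,y-b).
$$
Writing $x-b = t\xi$ with $\xi \in (0,1)$ and $y-b = t\eta$ with $\eta \in (3,4)$, the scaling identity $G_{(0,8t)}(t\xi,t\eta) = t^{\alpha-1} G_{(0,8)}(\xi,\eta)$ (which follows from \eqref{Ikeda_Watanabe} together with the Poisson kernel scaling \cite[(1.62)]{BBKRSV2009}) reduces the task to a lower bound of the form $G_{(0,8)}(\xi,\eta) \ge c\,\xi^{\alpha/2}$, uniform over $\xi \in (0,1)$, $\eta \in (3,4)$, with $c$ depending only on $\alpha$.

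That last estimate I would get from the mean value property \eqref{Green_mean} applied to $h = h_{(0,8),\eta} = G_{(0,8)}(\cdot,\eta)$ with $B = (0,2)$, which is admissible since $\dist((0,2),\eta) \ge 1 > 0$: for $\xi \in (0,2)$,
$$
G_{(0,8)}(\xi,\eta) = \int_{[0,2]^c} P_{(0,2)}(\xi,z)\,G_{(0,8)}(z,\eta)\,dz \ \ge\ \int_{9/2}^{11/2} P_{(0,2)}(\xi,z)\,G_{(0,8)}(z,\eta)\,dz .
$$
On the window $z \in (9/2,11/2)$ the explicit formula \eqref{Poisson_interval} gives $P_{(0,2)}(\xi,z) \ge c'\xi^{\alpha/2}$ with $c'$ depending only on $\alpha$ (using $2-\xi \ge 1$ and the crude bounds $z(z-2) \le 77/4$, $|z-\xi| \le 11/2$), while $G_{(0,8)}(z,\eta)$ is continuous and strictly positive on the compact set $[9/2,11/2]\times[3,4]$, which avoids the diagonal, hence bounded below by some $m = m(\alpha) > 0$. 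Thus $G_{(0,8)}(\xi,\eta) \ge c\,\xi^{\alpha/2}$ with $c = c'm$, and tracing the chain back, $G_U(x,y) \ge c\,t^{\alpha/2-1}\,\delta_U^{\alpha/2}(x)$. Since $\alpha/2 - 1 < 0$, we have $t^{\alpha/2-1} \to \infty$ as $t \to 0^+$, so one fixes $t \in (0,(w-b)/8)$ small enough (depending on $\alpha,b,w,c_*$) that $c\,t^{\alpha/2-1} \ge 2c_*$, finishing the proof.

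I expect the main obstacle to be precisely that, as $t \to 0$, the point $y$ also tends to the boundary point $b$, so $y$ cannot be regarded as ranging in a fixed compact subset of $U$ on which $G_U(\cdot,y)$ has a uniform positive lower bound; the rescaling to the fixed domain $(0,8)$ is what circumvents this, and the only care needed is to check that the powers of $t$ produced by Green-function scaling and by the explicit Poisson kernel combine to the negative power $t^{\alpha/2-1}$, which eventually dominates the fixed constant $2c_*$ — the remaining ingredients are just the standard boundary behaviour $G \asymp \delta^{\alpha/2}$.
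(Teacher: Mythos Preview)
Your proof is correct and follows the same overall strategy as the paper: reduce to showing $G_U(x,y) \ge 2c_*\,\delta_U^{\alpha/2}(x)$, use domain monotonicity to pass to the Green function of an interval, extract the factor $t^{\alpha/2-1}$, and then choose $t$ small. The only genuine difference is in how the interval Green function lower bound is obtained. The paper stays with $G_{(b,w)}$ and invokes the sharp two-sided estimate of \cite[Corollary~3.2]{BB2000} (after checking $\delta_U(x)\delta_U(y)/|x-y|^2 \le 1$, so that the off-diagonal regime of that estimate applies), which gives $G_U(x,y) \ge c_3\,t^{\alpha/2-1}\delta_U^{\alpha/2}(x)$ in essentially one line. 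You instead pass further down to $G_{(0,8t)}$, rescale to the fixed interval $(0,8)$, and then combine the mean value property \eqref{Green_mean} with explicit Poisson kernel bounds and a compactness argument on $G_{(0,8)}$ away from the diagonal. Your route is a little more self-contained, since it avoids citing the sharp Green function estimates from \cite{BB2000}; the paper's route is shorter. Both yield the identical endgame: $c\,t^{\alpha/2-1} \to \infty$ as $t \to 0^+$, so some admissible $t$ makes $c\,t^{\alpha/2-1} \ge 2c_*$.
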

\begin{proof} 
Let $t \in (0,(w-b)/8)$ (which will be chosen later) and assume that $x \in (b, b+t)$ and $y \in (b+3t,b+4t)$. We have
$$
\frac{\delta_U(x) \delta_U(y)}{|x - y|^2} \le \frac{4 t^2}{4 t^2} = 1.
$$
Using this (\ref{Green_comparability}) and \cite[Corollary 3.2]{BB2000}, we get
\begin{eqnarray}
\nonumber 
G_U(x,y) &\ge& G_{(b,w)}(x,y)\\
\nonumber
&\ge& c_1 \frac{\delta_U^{\alpha/2}(x) \delta_U^{\alpha/2}(y)}{|x - y|} \\
\nonumber
&\ge& c_2 \delta_U^{\alpha/2}(x) \frac{(3t)^{\alpha/2}}{2t} \\
&=& \frac{c_3 \delta_U^{\alpha/2}(x)}{t^{1-\alpha/2}},
\end{eqnarray}
where $c_1, c_2, c_3$ depends on  $\alpha$, $b$, $w$. Let $c_*$ be the constant from Lemma \ref {Green_estimates_1}. Put $$t = \min\left(\left(\frac{c_3}{2 c_*}\right)^{2/(2-\alpha)}, \frac{w-b}{8}\right).$$ Note that we have $t^{1 - \alpha/2} \le c_3/(2 c_*)$. Using this and Lemma \ref {Green_estimates_1}, we get
$$
G_U(x,y) - G_U(-x,y) \ge 
\left(\frac{c_3}{t^{1-\alpha/2}} - c_*\right) \delta_U^{\alpha/2}(x)
\ge c_* \delta_U^{\alpha/2}(x).
$$
\end{proof}

\begin{proof}[Proof of Proposition \ref{symmetry}]
On the contrary, assume that $a - (x_0 - r) \ne (x_0 + r) - b$. We may suppose that $a - (x_0 - r) < (x_0 + r) - b$. Note that this is equivalent to $a + b < 2 x_0$. Put $w_{-1} = x_0 - r$, $w_0 = (a+b)/2$, $s = a - (x_0 - r)$, $w_1 = b + s$. We have $w_1 = b + a - (x_0 - r) < x_0 + r$. We may assume that $w_0 = 0$. Then $w_{-1} = - w_1$ and $a = -b$. Put $U = (-w_1,-b) \cup (b,w_1)$. Note that 
\begin{align}
\nonumber
D_n^{\alpha/2}u(b) - D_n^{\alpha/2}u(a) 
&= \lim_{x \to b^+} \frac{u(x) - u(b)}{\delta_U^{\alpha/2}(x)} - \lim_{x \to b^+} \frac{u(-x) - u(-b)}{\delta_U^{\alpha/2}(-x)} \\
\label{normal_difference}
&= \lim_{x \to b^+} \frac{u(x) - u(-x)}{\delta_U^{\alpha/2}(x)}.
\end{align}

We have $(-\Delta)^{\alpha/2}u(x) = 0$ for $x \in U$. The function $u$ is equal to $1$ on $[-b,b]$, and it is equal to $0$ on $(-\infty,-w_1]\cup[x_0+r,\infty)$. By (\ref{reg}) and (\ref{Ikeda_Watanabe}), for $x \in U$ we have
$$
u(x) = \int_{(\overline{U})^c}P_U(x,z) u(z) \, dz = 
\int_{(\overline{U})^c}u(z) \int_U G_U(x,y) \frac{\calA_{\alpha}}{|y - z|^{1 + \alpha}} \, dy \, dz.
$$
This is equal to
$$
\int_U G_U(x,y) h(y) \, dy,
$$
where $h(y) = \int_{(\overline{U})^c}u(z) \calA_{\alpha} |y - z|^{-1 - \alpha} \, dz$, $y \in U$. For $y \in (b,w_1)$ we have
\begin{equation}
\label{hh}
h(y) - h(-y) = \calA_{\alpha} \int_{w_1}^{x_0 + r} u(z) \left(\frac{1}{|y - z|^{1 + \alpha}} - \frac{1}{|-y - z|^{1 + \alpha}}\right)\, dz > 0.
\end{equation}
Put $U_+ = \{x \in U: \, x > 0\} = (b,w_1)$. By the same arguments as in the proof of Lemma 3.3 in \cite{K2013}, we get
$$
u(x) - u(-x) = \int_{U_+} (G_U(x,y) - G_U(-x,y)) (h(y) - h(-y)) \, dy.
$$
Using this, Lemma \ref{Green_estimates_2}, (\ref{hh}) and (\ref{normal_difference}), we get $D_n^{\alpha/2}u(b) - D_n^{\alpha/2}u(a) > 0$. This contradicts the conditions of Problem \ref{bernoulli_problem} which imply that $$D_n^{\alpha/2}u(b) - D_n^{\alpha/2}u(a) = -\lambda+\lambda = 0.$$
\end{proof}

\begin{lemma}
\label{scaling}
Assume that $(u,K)$ is a solution of Problem \ref{bernoulli_problem} for $D = (-r,r)$ and $\lambda >0$, where $K = (-a,a)$, $r > 0$ and $a \in (0,r)$. Let $s > 0$. Put $D_s = (-sr,sr)$, $K_s = (-sa,sa)$ and define $u_s:\R \to [0,1]$ by $u_s(x) = u(x/s)$. Then $(u_s,K_s)$ is a solution of Problem \ref{bernoulli_problem} for $D_s$ and $s^{-\alpha/2} \lambda$.
\end{lemma}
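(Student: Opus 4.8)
The plan is to follow, essentially verbatim, the argument used for Lemma \ref{scaling_1}, adapting it to the present setting of two free boundary points. First I would check the algebraic and boundary conditions of Problem \ref{bernoulli_problem}. If $x \in \overline{K_s} = s\overline{K}$ then $x/s \in \overline{K}$, so $u_s(x) = u(x/s) = 1$; similarly if $x \in (D_s)^c = sD^c$ then $x/s \in D^c$, so $u_s(x) = u(x/s) = 0$. Continuity of $u_s$ on $\R$ is immediate since $u$ is continuous and $x \mapsto x/s$ is a homeomorphism, and $K_s = (-sa,sa)$ is an interval, hence trivially a domain of class $C^1$. At the free boundary point $sa$ (and symmetrically at $-sa$), the substitution $t \mapsto t/s$ in the difference quotient defining the generalized normal derivative gives
$$
D_n^{\alpha/2} u_s(sa) = \lim_{t \to 0^+} \frac{u_s(sa - t) - u_s(sa)}{t^{\alpha/2}} = \lim_{t \to 0^+} \frac{u(a - t/s) - u(a)}{(t/s)^{\alpha/2}} \cdot \frac{1}{s^{\alpha/2}} = -\frac{\lambda}{s^{\alpha/2}},
$$
using $D_n^{\alpha/2} u(a) = -\lambda$; the computation at $-sa$ is identical, the outward normal simply pointing the other way.

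The only part that involves the nonlocal structure is verifying $(-\Delta)^{\alpha/2} u_s(x) = 0$ for $x \in D_s \setminus \overline{K_s}$. Here I would set $W = D \setminus \overline{K} = (-r,-a) \cup (a,r)$, which belongs to the class $\calG$, and use \eqref{Dirichlet_problem} together with the facts $u = 1$ on $K$ and $u = 0$ on $D^c$ to write $u(x) = \int_{(\overline{W})^c} P_W(x,z) u(z)\,dz = \int_K P_W(x,z)\,dz$ for $x \in W$. For $x \in D_s \setminus \overline{K_s} = sW$ we have $x/s \in W$, so by the scaling property of the Poisson kernel \cite[(1.62)]{BBKRSV2009},
$$
u_s(x) = u\!\left(\frac{x}{s}\right) = \int_K P_W\!\left(\frac{x}{s}, z\right)\,dz = \int_K s\, P_{sW}(x, sz)\,dz,
$$
and the substitution $y = sz$ turns this into $\int_{K_s} P_{sW}(x,y)\,dy = \int_{(\overline{sW})^c} P_{sW}(x,y) u_s(y)\,dy$, which is the Poisson representation of a function that is $(-\Delta)^{\alpha/2}$-harmonic on $sW = D_s \setminus \overline{K_s}$. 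This establishes the required equation.

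I do not expect any genuine obstacle: the proof is a line-by-line transcription of the proof of Lemma \ref{scaling_1}, the only bookkeeping difference being that $W$ and $K$ are now unions of two intervals rather than single intervals. Since every preliminary fact invoked — the solution formula \eqref{Dirichlet_problem} and the Poisson-kernel scaling identity \cite[(1.62)]{BBKRSV2009} — is stated for the whole class $\calG$, this causes no difficulty; the only point worth a moment's care is to confirm that the scaling identity for $P_W$ applies to the disconnected set $W$, which it does.
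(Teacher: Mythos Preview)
Your proposal is correct and is precisely the argument the paper has in mind: the paper omits the proof entirely, saying only that it is ``very similar to the proof of Lemma \ref{scaling_1}'', and your line-by-line adaptation of that proof is exactly what is intended. One small slip in your commentary: $K = (-a,a)$ is still a single interval here---only $W = (-r,-a)\cup(a,r)$ becomes disconnected---but this does not affect the argument.
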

The proof of this lemma is very similar to the proof of Lemma \ref{scaling_1} and it is omitted.

By the definition of the fractional Laplacian, one easily obtains the following result.
\begin{lemma}
\label{translation}
Assume that $(u,K)$ is a solution of Problem \ref{bernoulli_problem} for $D = (x_0-r,x_0+r)$ and $\lambda >0$, where $K = (x_0 -a, x_0+a)$, $x_0 \in \R$, $r > 0$ and $a \in (0,r)$. Let $y_0 \in \R$. Put $D_* = (x_0+y_0-r, x_0+y_0+r)$, $K_* = (x_0+y_0 -a, x_0+y_0+a)$ and define $u_*:\R \to [0,1]$ by $u_*(x) = u(x - y_0)$. Then $(u_*,K_*)$ is a solution of Problem \ref{bernoulli_problem_1} for $D_*$ and $\lambda$.
\end{lemma}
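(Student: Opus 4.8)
The plan is to verify directly that $(u_*,K_*)$ satisfies each of the four conditions in Problem \ref{bernoulli_problem} for the translated data $D_*$ and the same level $\lambda$, using only that both the fractional Laplacian and the generalized normal derivative commute with translations. (Note that the conclusion should of course refer to Problem \ref{bernoulli_problem}, since $(u,K)$ is a solution of that problem; the ``Problem \ref{bernoulli_problem_1}'' in the statement is a slip.)

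First I would record the elementary set-theoretic facts $D_* = D + y_0$, $K_* = K + y_0$, so that $x \in \overline{K_*}$ iff $x - y_0 \in \overline{K}$, $x \in D_*^c$ iff $x - y_0 \in D^c$, and $x \in D_* \setminus \overline{K_*}$ iff $x - y_0 \in D \setminus \overline{K}$. From these, $u_*(x) = u(x-y_0) = 1$ on $\overline{K_*}$ and $u_*(x) = u(x-y_0) = 0$ on $D_*^c$; moreover $u_*$ maps $\R$ into $[0,1]$ and is continuous (being the composition of $u$ with a translation), and $K_*$ is again an interval, hence a domain of class $C^1$.

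Next, for the harmonicity condition, I would invoke translation invariance of $(-\Delta)^{\alpha/2}$: from the defining singular integral the change of variable in the inner integral leaves the kernel $|z|^{-1-\alpha}$ unchanged, so $(-\Delta)^{\alpha/2} u_*(x) = (-\Delta)^{\alpha/2} u(x - y_0)$ for every $x$. Hence for $x \in D_* \setminus \overline{K_*}$ we have $x - y_0 \in D \setminus \overline{K}$ and therefore $(-\Delta)^{\alpha/2} u_*(x) = (-\Delta)^{\alpha/2} u(x-y_0) = 0$. Finally, for the free boundary condition, $\partial K_* = \partial K + y_0$ and the outward unit normal to $K_*$ at $x \in \partial K_*$ equals the outward unit normal to $K$ at $x - y_0$, since a translation does not alter directions; consequently, for $x \in \partial K_*$,
$$
D_n^{\alpha/2} u_*(x) = \lim_{t \to 0^+} \frac{u_*(x + t n(x)) - u_*(x)}{t^{\alpha/2}} = \lim_{t \to 0^+} \frac{u(x - y_0 + t n(x - y_0)) - u(x - y_0)}{t^{\alpha/2}} = D_n^{\alpha/2} u(x - y_0) = -\lambda .
$$

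I do not expect any genuine obstacle here: the entire content is the translation invariance of the two nonlocal operators involved, which is visible at the level of their integral definitions, so the proof is a short verification — exactly as the sentence preceding the statement (``By the definition of the fractional Laplacian, one easily obtains\dots'') already announces.
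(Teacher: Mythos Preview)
Your proposal is correct and matches the paper's approach exactly: the paper omits the proof entirely, remarking only that ``By the definition of the fractional Laplacian, one easily obtains the following result,'' and your verification via translation invariance of $(-\Delta)^{\alpha/2}$ and of $D_n^{\alpha/2}$ is precisely the routine check this sentence announces. Your observation that the reference to Problem~\ref{bernoulli_problem_1} in the conclusion is a slip for Problem~\ref{bernoulli_problem} is also correct.
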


Now, we study the solution of Problem \ref{bernoulli_problem} for $D = (-1,1)$. For a fixed $a \in (0,1)$ let $f_a$ be a (unique) bounded continuous solution of the following Dirichlet problem:
\begin{equation*}
\left\{
\begin{aligned}
(-\Delta)^{\alpha/2}f_a(x)&=0 &&\text{for $x \in (-1,-a) \cup (a,1)$,}\\
f_a(x)&=1 &&\text{for $x \in [-a,a]$,}\\
f_a(x)&=0 &&\text{for $x \in (-\infty,-1] \cup [1,\infty)$.}
\end{aligned}
\right.
\end{equation*}
Clearly, we have $f_a:  \R \to [0,1]$, the function $f_a$ satisfies $f_a(-x) = f_a(x)$ for $x \in \R$. By (\ref{Dirichlet_problem}), we have $$f_a(x) = \int_{(\overline{W(a)})^c} P_{W(a)}(x,y) f_a(y) \, dy$$ for $x \in W(a)$, where $W(a) = (-1,-a) \cup (a,1)$.

By (\ref{Poisson_interval}), we get
\begin{equation}
\label{Pa1}
P_{(a,1)}(x, y) = C_\alpha \left( \frac{(1-x)(x-a)}{(y-a)(y-1)} \right)^{\alpha/2} |x-y|^{-1}
\end{equation}
 for $x \in (a,1)$ and $y \in [a,1]^c$. By (\ref{reg}), we have
\begin{equation}
\label{fa_rep}
f_a(x) = \int_{[a,1]^c} P_{(a,1)}(x,y) f_a(y) \, dy
\end{equation}
for $x \in (a,1)$. Using this and induction, we obtain for $x \in (a,1)$ 
$$
f_a(x) = \sum_{n = 1}^{\infty} f_a^{(n)}(x),
$$
where
\begin{equation}
\label{fanx}
f_a^{(1)}(x) = \int_{-a}^a P_{(a,1)}(x,y) \, dy, \quad f_a^{(n)}(x) = \int_a^1 P_{(a,1)}(x,-y) f_a^{(n-1)}(y) \, dy, 
\end{equation}
for $n \in \N$, $n \ge 2$.

\begin{lemma}
\label{fa_continuity}
The function $(0,1) \ni a \mapsto f_a(x)$ is continuous for any fixed $x \in \R$.
\end{lemma}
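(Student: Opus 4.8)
The plan is to run everything through the series representation $f_a(x)=\sum_{n=1}^{\infty}f_a^{(n)}(x)$ of \eqref{fanx} (valid for $x\in(a,1)$), combined with the symmetry $f_a(-x)=f_a(x)$. First I would dispose of the trivial cases: for $|x|\ge 1$ we have $f_a(x)=0$ and for $x=0$ we have $f_a(x)=1$, for every $a\in(0,1)$, so the claim is immediate there. By symmetry it remains to fix $x\in(0,1)$. For $a\in[x,1)$ we have $x\in[-a,a]$, hence $f_a(x)=1$; this already gives continuity on $[x,1)$ and right-continuity at $a=x$. For left-continuity at $a=x$: all terms of the series are nonnegative, so $f_a^{(1)}(x)\le f_a(x)\le 1$, and by \eqref{Pa1} together with $\int_{[a,1]^c}P_{(a,1)}(x,y)\,dy=1$ one has $f_a^{(1)}(x)=1-\int_{(-\infty,-a)\cup(1,\infty)}P_{(a,1)}(x,y)\,dy$; the integrand carries the factor $(x-a)^{\alpha/2}$ and is, for $a$ close to $x$, dominated by a fixed integrable function, so the integral tends to $0$ as $a\to x^-$. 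Hence $f_a(x)\to 1=f_x(x)$.

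It then remains to prove continuity at an arbitrary $a_0\in(0,x)$; fix $\delta>0$ with $I:=[a_0-\delta,a_0+\delta]\subset(0,x)$. The argument has two ingredients. (i) Continuity of each $a\mapsto f_a^{(n)}(x)$ on $I$. For $n=1$ I would substitute $y=-a+2at$ in $f_a^{(1)}(x)=\int_{-a}^{a}P_{(a,1)}(x,y)\,dy$, which simultaneously fixes the interval of integration to $(0,1)$ and isolates the integrable singularity of $P_{(a,1)}$ at $y=a$ into a factor $(1-t)^{-\alpha/2}$; the resulting integrand is continuous in $a$ and, for $a\in I$, dominated by a fixed integrable function, so dominated convergence applies. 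For $n\ge 2$ I would induct: substituting $y=a+(1-a)s$ in $f_a^{(n)}(x)=\int_a^1 P_{(a,1)}(x,-y)f_a^{(n-1)}(y)\,dy$ again fixes the domain, the kernel $P_{(a,1)}(x,-\cdot)$ has no singularity there, and using the inductive joint continuity of $f^{(n-1)}$ together with $f^{(n-1)}\le 1$, the integrand is continuous in $a$ and dominated, so dominated convergence again gives continuity. (ii) A geometric bound uniform in $a\in I$. Put $g(a,x'):=\int_{-1}^{-a}P_{(a,1)}(x',y)\,dy$ for $a<x'<1$. By the same substitutions and dominated convergence, $g$ extends continuously to the compact set $\Omega:=\{(a,x'):a\in I,\ a\le x'\le 1\}$, with $g(a,a)=g(a,1)=0$ (as $x'\to a^+$ the Poisson mass of $P_{(a,1)}(x',\cdot)$ concentrates near $a$ and, since $a>0$, lands in $(-a,a)$, so $g\to 0$; as $x'\to 1^-$ it concentrates near $1$, i.e. in $(1,\infty)$, so again $g\to 0$), while on the interior $g(a,x')=1-\int_{-a}^{a}P_{(a,1)}(x',y)\,dy-\int_{-\infty}^{-1}P_{(a,1)}(x',y)\,dy-\int_1^{\infty}P_{(a,1)}(x',y)\,dy<1$ by strict positivity of the Poisson kernel. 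Hence $\gamma:=\max_{\Omega}g<1$. Since $f_a^{(n+1)}(x')=\int_{-1}^{-a}P_{(a,1)}(x',w)f_a^{(n)}(-w)\,dw\le g(a,x')\sup_{z\in(a,1)}f_a^{(n)}(z)$, induction gives $\sup_{x'\in(a,1)}f_a^{(n)}(x')\le \gamma^{\,n-1}$ for all $a\in I$ and $n\ge 1$.

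Finally I would combine the two ingredients: for $a\in I$,
$$
|f_a(x)-f_{a_0}(x)|\le \sum_{n=1}^{N}|f_a^{(n)}(x)-f_{a_0}^{(n)}(x)|+2\sum_{n>N}\gamma^{\,n-1},
$$
so choosing $N$ large (using $\gamma<1$) and then letting $a\to a_0$ (using (i)) yields $f_a(x)\to f_{a_0}(x)$.

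The main obstacle is ingredient (ii): establishing $\gamma<1$ uniformly for $a$ near $a_0$. This forces one to control the boundary behaviour of the Poisson kernel $P_{(a,1)}(x',\cdot)$ as $x'$ approaches either endpoint of $(a,1)$ — the key point being that the "crossing" contribution $g(a,x')$ stays away from $1$ because near the left endpoint $a>0$ the escaping mass lands in $(-a,a)$ rather than in $(-1,-a)$ — and to make the continuous extension of $g$ (and of each $f_a^{(n)}$) up to the relevant boundary pieces rigorous, which is delicate precisely because of the integrable singularities of $P_{(a,1)}$; this is why the substitutions that fix simultaneously the domain of integration and the location of the singularity are used throughout.
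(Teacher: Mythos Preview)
Your proposal is correct and follows the same route as the paper: represent $f_a=\sum_n f_a^{(n)}$ via \eqref{fanx}, prove continuity of each $a\mapsto f_a^{(n)}(x)$ by dominated convergence, establish a uniform geometric bound $\sup_{a\in I}\sup_{x'\in(a,1)}f_a^{(n)}(x')\le\gamma^{\,n-1}$ with $\gamma<1$, and combine. The paper obtains $\gamma<1$ by the domain-monotonicity inequality $\int_{-1}^{-a}P_{(a,1)}(x',y)\,dy\le\int_{-1}^{-\eps}P_{(\eps,1)}(x',y)\,dy$ for $a\ge\eps$ (a consequence of the strong Markov/sweeping property of the Poisson kernel) rather than your compactness argument on the continuous extension of $g$, and it leaves the boundary cases $a\ge|x|$ and $a\to|x|^{-}$ implicit, but otherwise the two proofs coincide.
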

\begin{proof}
By formulas (\ref{Pa1}), (\ref{fanx}) and induction for any fixed $n \in \N$ and $x \in \R$, the function $(0,1) \ni a \mapsto f_a^{(n)}(x)$ is continuous. By standard properties of the Poisson kernel, we obtain that for any $\eps > 0$ there exists $\delta > 0$ such that 
$$
\sup_{a \in [\eps,1]} \sup_{x \in (a,1)} \int_a^1 P_{(a,1)}(x,-y) \, dy \le 
\sup_{x \in (\eps,1)} \int_{\eps}^1 P_{(\eps,1)}(x,-y) \, dy
 = 1 - \delta < 1.
$$
Using this and induction, for any fixed $\eps > 0$ and arbitrary $n \in \N$ we obtain
$$
\sup_{a \in [\eps,1]} \sup_{x \in (a,1)} f_a^{(n)}(x) \le (1-\delta)^{n-1}.
$$
By this and continuity of $(0,1) \ni a \mapsto f_a^{(n)}(x)$, we get the assertion of the lemma.
\end{proof}

For $a \in (0,1)$ put 
\begin{equation}
\label{Psi_def}
\Psi(a) = - \lim_{h \to 0^+} \frac{f_a(a+h) - f_a(a)}{h^{\alpha/2}}.
\end{equation}
The next two lemmas concern some properties of the function $\Psi$.
\begin{lemma}
\label{psi_properties}
For any $a \in (0,1)$ $\Psi(a)$ is well defined and we have
\begin{align}
\label{D(a)-2}
    \Psi(a) =  C_{\alpha} (1-a)^{\alpha/2} \left( \int_1^\infty \Phi(a,y) \, dy + \int_a^\infty \Phi(a,-y) \, dy - \int_a^1 \Phi(a,-y) f_a(y)\, dy \right)
\end{align}
where 
\begin{equation}
\label{phi_formula}
\Phi(a,y)=((y-a)(y-1))^{-\alpha/2}|y-a|^{-1}.
\end{equation}
For any $a \in (0,1)$ we have $\Psi(a) \in (0,\infty)$ and $\Psi$ is continuous on $(0,1)$.
\end{lemma}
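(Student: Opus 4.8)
The plan is to differentiate the interval Poisson-kernel representation of $f_a$ at the endpoint $a$. Since $(a,1)$ is bounded, $\int_{[a,1]^c}P_{(a,1)}(x,y)\,dy=1$ for $x\in(a,1)$, and $f_a(a)=1$ by continuity; subtracting this normalization from \eqref{fa_rep}, and using that $f_a(y)-1=0$ on $[-a,a]$, $f_a(y)=f_a(-y)$ on $(-1,-a)$, and $f_a\equiv0$ on $(-\infty,-1]\cup[1,\infty)$, gives for small $h>0$
$$
f_a(a+h)-1=\int_{-1}^{-a}P_{(a,1)}(a+h,y)\bigl(f_a(-y)-1\bigr)\,dy-\int_{-\infty}^{-1}P_{(a,1)}(a+h,y)\,dy-\int_{1}^{\infty}P_{(a,1)}(a+h,y)\,dy .
$$
From \eqref{Pa1} (note $(y-a)(y-1)>0$ also for $y<a$) one computes, for each fixed $y\in[a,1]^c$, that $h^{-\alpha/2}P_{(a,1)}(a+h,y)\to C_\alpha(1-a)^{\alpha/2}\Phi(a,y)$ as $h\to0^+$, with $\Phi$ as in \eqref{phi_formula}. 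Dividing the displayed identity by $h^{\alpha/2}$ and passing to the limit under the integral sign — justified by dominated convergence with $h^{-\alpha/2}P_{(a,1)}(a+h,y)$ bounded, for all small $h$, by a fixed multiple of $\Phi(a,y)$, which is integrable on each of the three ranges (near $y=1$ it behaves like $(y-1)^{-\alpha/2}$, integrable since $\alpha/2<1$, and it decays like $y^{-1-\alpha}$ at infinity) — shows simultaneously that the limit in \eqref{Psi_def} exists and, after the substitution $z=-y$ in the first two integrals, yields exactly formula \eqref{D(a)-2}.

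Next, for $\Psi(a)\in(0,\infty)$ I would use $\int_a^\infty\Phi(a,-y)\,dy=\int_a^1\Phi(a,-y)\,dy+\int_1^\infty\Phi(a,-y)\,dy$ to rewrite \eqref{D(a)-2} in the manifestly nonnegative form
$$
\Psi(a)=C_\alpha(1-a)^{\alpha/2}\left(\int_1^\infty\Phi(a,y)\,dy+\int_a^1\Phi(a,-y)\bigl(1-f_a(y)\bigr)\,dy+\int_1^\infty\Phi(a,-y)\,dy\right).
$$
Each of the three integrals is finite: in all of them $\Phi(a,\cdot)$ is evaluated only at arguments bounded away from its singular point $a$, has at worst the integrable singularity $(y-1)^{-\alpha/2}$ near $y=1$, and decays like $y^{-1-\alpha}$ at infinity; and since $0\le f_a\le 1$, every term is nonnegative, the first one being strictly positive. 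Hence $0<\Psi(a)<\infty$.

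Finally, continuity of $\Psi$ on $(0,1)$ follows by applying dominated convergence to each term of the last display along an arbitrary sequence $a_n\to a$ lying in a fixed compact subinterval $[a-\eps_0,a+\eps_0]\subset(0,1)$: for the two integrals over $(1,\infty)$ the integrands are continuous in $a$ and dominated, uniformly in $n$, by an integrable function of $y$; for the middle term one writes it as $\int_0^1\Phi(a_n,-y)\bigl(1-f_{a_n}(y)\bigr)\,\mathbf 1_{(a_n,1)}(y)\,dy$, invokes Lemma \ref{fa_continuity} for the pointwise convergence $f_{a_n}(y)\to f_a(y)$ (together with pointwise convergence of $\Phi(a_n,-y)\mathbf 1_{(a_n,1)}(y)$ for $y\ne a$), and dominates by a constant multiple of $\mathbf 1_{(a-\eps_0,1)}(y)$; the prefactor $(1-a)^{\alpha/2}$ is plainly continuous. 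The main obstacle is the first step: obtaining the correct limit $h^{-\alpha/2}P_{(a,1)}(a+h,y)\to C_\alpha(1-a)^{\alpha/2}\Phi(a,y)$ and, above all, the uniform-in-$h$ integrable domination needed to pass the limit inside the integral — here the crucial device is differentiating $f_a(a+h)-1$ (via the normalization $\int_{[a,1]^c}P_{(a,1)}=1$) rather than $f_a(a+h)$ itself, which is what renders the resulting expression convergent; everything after that is routine bookkeeping.
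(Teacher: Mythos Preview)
Your argument is correct and follows essentially the same route as the paper: subtract the normalization $\int_{[a,1]^c}P_{(a,1)}(a+h,y)\,dy=1$ from the Poisson representation \eqref{fa_rep}, decompose the resulting integral according to the values of $f_a$, and pass to the limit to obtain \eqref{D(a)-2}; positivity, finiteness and continuity then follow from the explicit formula together with Lemma~\ref{fa_continuity}. The only difference is that you are more explicit than the paper about the dominated-convergence justification for exchanging limit and integral, and you spell out the manifestly nonnegative rewriting (which the paper records later as \eqref{Psi_rep}); these are elaborations, not a different approach.
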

\begin{proof}
We have
\begin{align*}
\Psi(a) = \lim_{h \to 0^+} h^{-\alpha/2} \left( f_a(a) - \int_{[a,1]^c} P_{(a,1)}(a+h,y) f_a(y) \, dy \right).
\end{align*}
Moreover, $f_a(a)=1=\int_{[a,1]^c} P_{(a,1)}(a+h,y) \, dy$ for $h \in (0,1-a)$. Thus,
\begin{multline*}
\Psi(a) = \lim_{h \to 0^+} h^{-\alpha/2} \bigg( \int_1^\infty P_{(a,1)}(a+h, y) \, dy + \int_a^\infty P_{(a,1)}(a+h, -y) \, dy \\
- \int_a^1 P(a+h, -y) f_a(y) \, dy \bigg).
\end{multline*}
Using (\ref{Pa1}), the right-hand side of this equation tends to the right-hand side of (\ref{D(a)-2}). This implies that $\Psi(a)$ is well defined and gives (\ref{D(a)-2}). 

The fact that $\Psi(a) \in (0,\infty)$ for any $a \in (0,1)$ easily follows from (\ref{D(a)-2}) and (\ref{phi_formula}). (\ref{D(a)-2}), (\ref{phi_formula}) and Lemma \ref{fa_continuity} imply continuity of $\Psi$ on $(0,1)$. 
\end{proof}

\begin{lemma}
\label{asymptotics_psi}
We have $\displaystyle \lim_{a \to 0^+} \Psi(a) = \lim_{a \to 1^-} \Psi(a) = \infty$.
\end{lemma}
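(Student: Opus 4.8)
The plan is to work from the representation \eqref{D(a)-2} of $\Psi$. First I would rewrite it in a form in which every summand is manifestly nonnegative: splitting $\int_a^\infty\Phi(a,-y)\,dy=\int_a^1\Phi(a,-y)\,dy+\int_1^\infty\Phi(a,-y)\,dy$ and merging the first piece with the subtracted term gives
\[
\Psi(a)=C_\alpha(1-a)^{\alpha/2}\left(\int_1^\infty\Phi(a,y)\,dy+\int_1^\infty\Phi(a,-y)\,dy+\int_a^1\Phi(a,-y)\bigl(1-f_a(y)\bigr)\,dy\right),
\]
and all three integrals are nonnegative, since $\Phi\ge 0$ by \eqref{phi_formula} and $0\le f_a\le 1$. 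It then suffices, in each of the two regimes, to bound $\Psi(a)$ from below by one conveniently chosen term.

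For $a\to 1^-$ I would keep only the first term. The substitution $y=z+1$ turns $\int_1^\infty\Phi(a,y)\,dy=\int_1^\infty(y-1)^{-\alpha/2}(y-a)^{-\alpha/2-1}\,dy$ into $\int_0^\infty z^{-\alpha/2}\bigl(z+(1-a)\bigr)^{-\alpha/2-1}\,dz$, which by \eqref{rep2} equals $T_\alpha(1-a)^{-\alpha}$ --- this is precisely the first integral computed in the derivation of \eqref{D(a)}, with $a$ replaced by $1-a$. Hence $\Psi(a)\ge C_\alpha(1-a)^{\alpha/2}\,T_\alpha(1-a)^{-\alpha}=C_\alpha T_\alpha(1-a)^{-\alpha/2}\to\infty$ as $a\to 1^-$.

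For $a\to 0^+$ the prefactor $(1-a)^{\alpha/2}$ and the first two integrals stay bounded, so the divergence must come from $\int_a^1\Phi(a,-y)\bigl(1-f_a(y)\bigr)\,dy$, and this is the main obstacle. The bare integral $\int_a^1\Phi(a,-y)\,dy$ does blow up (like $a^{-\alpha/2}$), but $1-f_a$ vanishes at the endpoint $a$, so a crude bound will not do; I need a quantitative lower bound for $1-f_a$ near $a$, which I would obtain by comparison. Since $g_a:=1-f_a$ is bounded, measurable, continuous on $(a,1)$ and satisfies $(-\Delta)^{\alpha/2}g_a=0$ there, \eqref{reg} with $B=(a,1)$ gives $g_a(x)=\int_{(\overline{(a,1)})^c}P_{(a,1)}(x,z)g_a(z)\,dz$ for $x\in(a,1)$; as $g_a=1$ on $(1,\infty)$ and $g_a\ge 0$ on $(-\infty,a)$ (both because $0\le f_a\le 1$ and $f_a=0$ off $(-1,1)$), and $P_{(a,1)}>0$, this forces $g_a(x)\ge\int_1^\infty P_{(a,1)}(x,z)\,dz$. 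Evaluating this integral with \eqref{Pa1} and estimating $(z-a)^{-\alpha/2}\ge z^{-\alpha/2}$ and $(z-x)^{-1}\ge z^{-1}$ under the integral sign, together with the elementary identity $\int_1^\infty z^{-\alpha/2-1}(z-1)^{-\alpha/2}\,dz=T_\alpha$, I obtain
\[
1-f_a(y)\ \ge\ C_\alpha T_\alpha\bigl((1-y)(y-a)\bigr)^{\alpha/2},\qquad y\in(a,1).
\]
Restricting the integral to $y\in(2a,1/2)$ (valid once $a<1/4$), where $y+a\le 2y$, $y+1\le 3/2$, $1-y\ge 1/2$ and $y-a\ge y/2$, the integrand $\Phi(a,-y)\bigl(1-f_a(y)\bigr)$ is bounded below by $c_\alpha\,y^{-1}$ for some $c_\alpha>0$ depending only on $\alpha$, whence $\int_a^1\Phi(a,-y)\bigl(1-f_a(y)\bigr)\,dy\ge c_\alpha\log\frac{1}{4a}\to\infty$; combined with $(1-a)^{\alpha/2}\ge(3/4)^{\alpha/2}$ this gives $\Psi(a)\to\infty$ as $a\to 0^+$.

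The only genuine difficulty is the pointwise lower bound on $1-f_a$ in the $a\to 0^+$ case; everything else is a change of variables or an elementary estimate. One should also check that the comparison step is legitimate in the paper's analytic setting --- that \eqref{reg} applies to $g_a$ on $B=(a,1)$ and that $g_a\ge 0$ is immediate from $0\le f_a\le 1$ --- but both follow directly from the set-up of this section.
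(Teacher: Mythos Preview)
Your proof is correct and follows essentially the same approach as the paper: for $a\to 1^-$ you keep the first integral $\int_1^\infty\Phi(a,y)\,dy$, and for $a\to 0^+$ you keep the third one and bound $g_a=1-f_a$ from below via the Poisson representation $g_a(x)\ge\int_1^\infty P_{(a,1)}(x,z)\,dz$, then extract a logarithmic divergence. The only differences from the paper's argument are cosmetic: you evaluate $\int_1^\infty\Phi(a,y)\,dy$ exactly as $T_\alpha(1-a)^{-\alpha}$ (the paper uses the cruder inequality $\Phi(a,y)>(y-a)^{-\alpha-1}$ here and reserves the exact evaluation for the later proof of Theorem~\ref{estimates_lambda}), your lower bound $g_a(y)\ge C_\alpha T_\alpha((1-y)(y-a))^{\alpha/2}$ carries the extra factor $(1-y)^{\alpha/2}$, and you integrate over $(2a,1/2)$ rather than the paper's $(2a,3/4)$.
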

\begin{proof}
In the whole proof we assume that $a \in (0,1)$. Using (\ref{D(a)-2}), we get 
\begin{align*}
    \Psi(a) &> C_{\alpha} (1-a)^{\alpha/2} \int_1^\infty \Phi(a,y)\, dy \\ &= C_{\alpha} (1-a)^{\alpha/2} \int_1^\infty (y-a)^{-\alpha/2-1} (y-1)^{-\alpha/2} \, dy \\ &> C_{\alpha} (1-a)^{\alpha/2} \int_1^\infty (y-a)^{-\alpha-1} \, dy \\
		&= \frac{C_{\alpha}}{\alpha(1-a)^{\alpha/2}}.
\end{align*}
The right-hand side tends to infinity as $a \to 1^-$, so as $\Psi(a)$.

In order to prove the second limit, put $g_a(x) = 1 - f_a(x)$. By (\ref{D(a)-2}), we get
\begin{equation}
\label{psi_a_est}
\Psi(a) \geq C_{\alpha} (1-a)^{\alpha/2} \int_a^1 \Phi(a,-y) g_a(y) \, dy.
\end{equation}
For $x \in (a,1)$ we have $1 = \int_{[a,1]^c} P_{(a,1)}(x,y) \, dy$. Using this and (\ref{fa_rep}), we obtain for $x \in (a,1)$
$$
g_a(x) 
= \int_{[a,1]^c} P_{(a,1)}(x,y) g_a(y) \, dy
\ge \int_1^{\infty} P_{(a,1)}(x,y) \, dy.
$$
Note that for $x \in (a,3/4)$, $y \in (1,\infty)$ we have $(1-x)^{\alpha/2} \ge (1/4)^{\alpha/2}$, $(y-a)^{-\alpha/2}(y-1)^{-\alpha/2} (y-x)^{-1} \ge y^{-1-\alpha}$. Thus, using (\ref{Pa1}), for all such $x$ we get
\begin{align}
    \nonumber
    g_a(x) &\ge \int_1^{\infty} P_{(a,1)}(x,y) \, dy \\
    \nonumber
    &\ge C_{\alpha} 2^{-\alpha} (x - a)^{\alpha/2} \int_1^{\infty} y^{-1-\alpha} \, dy \\
    \label{g-estimate2}
    &= C_{\alpha} 2^{-\alpha} \alpha^{-1} (x - a)^{\alpha/2}.
\end{align}
Additionally, for  $y \in (2a,1)$ we have 
\begin{equation}\label{estimate2}
    \left(\frac{y-a}{y+a}\right)^{\alpha/2} 
    = \left(\frac{y/a-1}{y/a+1}\right)^{\alpha/2} 
    \ge \left(\frac{y/a-y/(2a)}{y/a+y/(2a)}\right)^{\alpha/2}
    = \left(\frac{1}{3}\right)^{\alpha/2}.
\end{equation}
Using inequalities \eqref{psi_a_est}, \eqref{g-estimate2} and \eqref{estimate2}, we obtain for $a \in (0,1/4)$
\begin{align*}
    \Psi(a) &\geq C_{\alpha} (1/2)^{\alpha/2} \int_a^{3/4} \Phi(a,-y) C_{\alpha} 2^{-\alpha} \alpha^{-1} (y - a)^{\alpha/2} \, dy \\
    &= C_{\alpha}^2 2^{-3\alpha/2} \alpha^{-1} \int_a^{3/4} (y + a)^{-1-\alpha/2} (y+1)^{-\alpha/2} (y - a)^{\alpha/2} \, dy \\
    &\geq C_{\alpha}^2 2^{-3\alpha/2} \alpha^{-1} \int_{2a}^{3/4} 2^{-\alpha/2} 3^{-\alpha/2} (y + a)^{-1} \, dy \\
    &= C_{\alpha}^2 2^{-2\alpha} 3^{-\alpha/2} \alpha^{-1} (\log(3/4+a) - \log(3a)).
\end{align*}
This implies that $$\lim_{a \to 0^+} \Psi(a) = \infty.$$
\end{proof}

\begin{proof}[Proof of Theorem \ref{alpha02}] If $(u,K)$ is a solution of Problem \ref{bernoulli_problem} for $D = (x_0-r,x_0+r)$ and some $\lambda > 0$, then, by Proposition \ref{symmetry}, $K$ is symmetric with respect to $x_0$. By Lemmas \ref{scaling}, \ref{translation}, it is obvious that it is sufficient to show the theorem for $D = (-1,1)$. Using this lemma, one also obtains $\lambda_{\alpha,D} = \lambda_{\alpha,(x_0-r,x_0+r)} = r^{-\alpha/2} \lambda_{\alpha,(-1,1)}$.

So, we may assume that $D = (-1,1)$ (that is $x_0 = 0$, $r = 1$). Hence, if $(u,K)$ is a solution of Problem \ref{bernoulli_problem} for $D$ and some $\lambda > 0$, then $u = f_a$ and $K = (-a,a)$ for some $a \in (0,1)$. 

On the other hand, by Lemma \ref{psi_properties}, for any $a \in (0,1)$ the function $f_a$ is a solution of Problem \ref{bernoulli_problem} for $D$ and $\lambda = \Psi(a)$. Now, using Lemma \ref{asymptotics_psi} and the fact that the function $(0,1) \ni a \mapsto \Psi(a)$ is continuous and positive on $(0,1)$, we obtain the assertion of the theorem.
\end{proof}

\begin{proof}[Proof of Theorem \ref{estimates_lambda}]
By \eqref{rep1}, we have for any $a \in (0,1)$ 
$$
\int_1^{\infty} \Phi(a,y) \, dy = T_{\alpha} \, _2F_1\left(1+\frac{\alpha}{2}, \alpha; 1+\frac{\alpha}{2}; a\right) = T_{\alpha} (1-a)^{-\alpha},
$$
where $\Phi$ is given by (\ref{phi_formula}). Using this and Lemma \ref{psi_properties}, for any $a \in (0,1)$ we obtain
\begin{align*}
   \Psi(a) &\geq C_{\alpha} (1-a)^{\alpha/2}\left(T_{\alpha} (1-a)^{-\alpha} + \int_1^\infty \Phi(a, -z) \, dz \right) \\
    &\geq C_{\alpha} (1-a)^{\alpha/2}\left(T_{\alpha} (1-a)^{-\alpha} + \int_1^\infty (z+1)^{-\alpha-1} \, dz \right) \\
    &= C_{\alpha} (1-a)^{\alpha/2}\left(T_{\alpha} (1-a)^{-\alpha} + \frac{1}{\alpha 2^{\alpha}} \right).
\end{align*}
Put $L(a) = C_{\alpha} (1-a)^{\alpha/2}\left(T_{\alpha} (1-a)^{-\alpha} + \alpha^{-1} 2^{-\alpha} \right)$. We will now show that $L$ is increasing on $(0,1)$. Applying Lemma \ref{beta} yields
$$
    \frac{d}{da}L(a) = \frac{C_{\alpha} \alpha}{2} (1-a)^{-1-\alpha/2} \left(T_{\alpha} - \frac{1}{\alpha 2^{\alpha}}(1-a)^{\alpha} \right)> 0  
$$
for any $a \in (0,1)$. We conclude that
$$
\min_{a \in (0,1)} \Psi(a) \geq \min_{a \in (0,1)} L(a) \geq L(0) = C_{\alpha} \left(T_{\alpha} + \frac{1}{\alpha 2^\alpha} \right).
$$
On the other hand, for any $a \in (0,1)$ we have
\begin{align*}
    \Psi(a) &\leq C_{\alpha} (1-a)^{\alpha/2} \left( T_{\alpha} (1-a)^{-\alpha} + \int_a^\infty \Phi(a, -z) \, dz\right) \\
    &\leq  C_{\alpha} (1-a)^{\alpha/2} \left( T_{\alpha} (1-a)^{-\alpha} + \int_a^\infty (a+z)^{-\alpha-1} \, dz\right) \\
    & = C_{\alpha} (1-a)^{-\alpha/2} \left( T_{\alpha} + \frac{1}{\alpha 2^\alpha} \left(\frac{1}{a}-1\right)^\alpha \right).
\end{align*}
Put $U(a) = C_{\alpha} (1-a)^{-\alpha/2} \left( T_{\alpha} + \frac{1}{\alpha 2^\alpha} \left(\frac{1}{a}-1\right)^\alpha \right)$. We observe that
$$
\min_{a \in (0,1)} \Psi(a) \leq \min_{a \in (0,1)} U(a) \leq U\left(1-
\frac{\alpha}{2}\right) = C_\alpha \left( \frac{2}{\alpha} \right)^{\alpha/2} \left( T_\alpha + \frac{1}{\alpha 2^\alpha} \left( \frac{\alpha}{2-\alpha} \right)^\alpha \right),
$$
which proves the assertion of the theorem.
\end{proof}

\section{Discussion}
\label{conjectures}

In this section we discuss properties of solutions of the inner Bernoulli problem for the fractional Laplacian on intervals and balls. We also discuss the connection of the Bernoulli problem with the corresponding variational problem. 

\begin{figure}
\centering
\begin{subfigure}{.5\textwidth}
  \centering
  \includegraphics[scale=0.53]{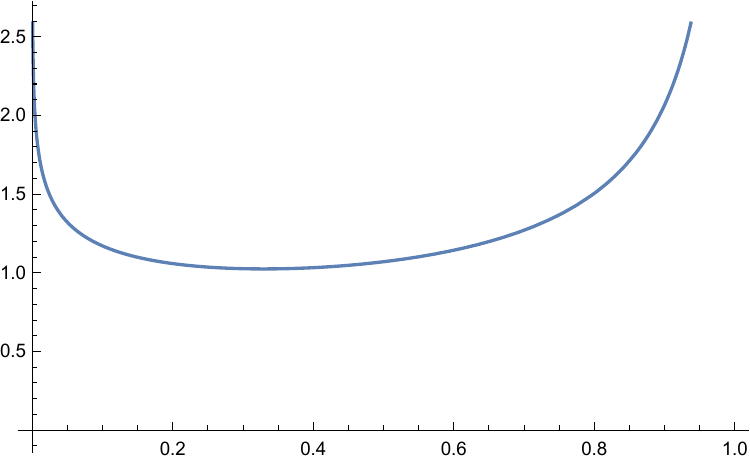}
  \caption{$\alpha = 1$}
\end{subfigure}%
\begin{subfigure}{.5\textwidth}
  \centering
  \includegraphics[scale=0.53]{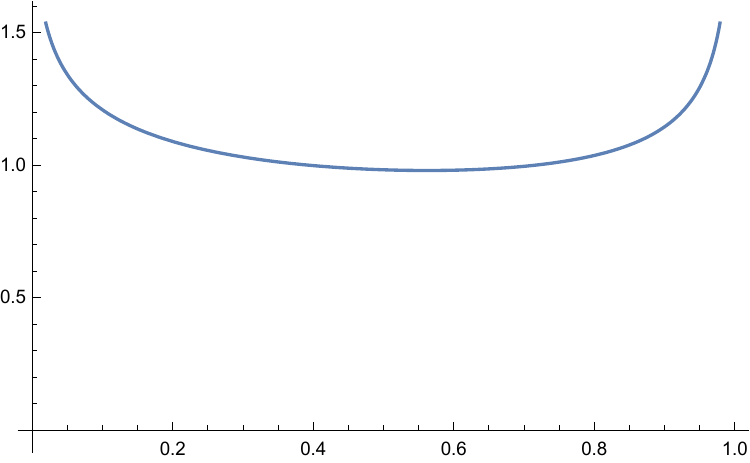}
  \caption{$\alpha = 1/2$}
\end{subfigure}
\caption{Graphs of $\Psi$ for $\alpha = 1$ and $\alpha = 1/2$.} 
\label{fig:Psigraphs}
\end{figure}

Let us start with the following conjecture.
\begin{conjecture}
\label{interval_number_of_solutions}
Let $\alpha \in (0,2)$, $x_0 \in \R$, $r > 0$ and $D = (x_0-r,x_0+r)$. Then there is a constant $\lambda_{\alpha,D} > 0$ such that the Problem \ref{bernoulli_problem} has
    \begin{enumerate}
        \item exactly two solutions for $\lambda> \lambda_{\alpha,D}$,
        \item exactly one solution for $\lambda = \lambda_{\alpha,D}$,
        \item no solution for $\lambda < \lambda_{\alpha,D}$.
    \end{enumerate}
\end{conjecture}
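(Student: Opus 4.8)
The plan is to follow the strategy that proved Theorem~\ref{one_free_thm}. By Lemmas~\ref{scaling} and \ref{translation} it suffices to treat $D=(-1,1)$, and this also yields $\lambda_{\alpha,D}=r^{-\alpha/2}\lambda_{\alpha,(-1,1)}$. For $D=(-1,1)$ we showed in the proof of Theorem~\ref{alpha02} that $(u,K)$ solves Problem~\ref{bernoulli_problem} with parameter $\lambda$ if and only if $K=(-a,a)$ and $u=f_a$ for some $a\in(0,1)$ with $\Psi(a)=\lambda$. Thus the conjecture is equivalent to the assertion that the (continuous, positive) function $\Psi:(0,1)\to(0,\infty)$ attains every value at most twice. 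Since $\Psi(a)\to\infty$ as $a\to0^+$ and as $a\to1^-$ by Lemma~\ref{asymptotics_psi}, this is in turn equivalent to $\Psi$ being \emph{strictly unimodal}: strictly decreasing on $(0,a_\ast)$ and strictly increasing on $(a_\ast,1)$ for a unique $a_\ast\in(0,1)$. Once this is known one sets $\lambda_{\alpha,D}:=\Psi(a_\ast)$ and the trichotomy follows by the intermediate value theorem exactly as in the proof of Theorem~\ref{one_free_thm}.

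The natural way to get unimodality, mirroring Proposition~\ref{convexity}, is to prove that $\Psi$ is \emph{strictly convex} on $(0,1)$. I would start from Lemma~\ref{psi_properties}, which gives
$$
\Psi(a)=C_\alpha(1-a)^{\alpha/2}\Big(\int_1^\infty\Phi(a,y)\,dy+\int_a^\infty\Phi(a,-y)\,dy-\int_a^1\Phi(a,-y)f_a(y)\,dy\Big),
$$
with $\Phi$ as in \eqref{phi_formula}. Using $\int_1^\infty\Phi(a,y)\,dy=T_\alpha(1-a)^{-\alpha}$ (computed in the proof of Theorem~\ref{estimates_lambda}), the first summand equals $C_\alpha T_\alpha(1-a)^{-\alpha/2}$, which is strictly convex; the second summand $C_\alpha(1-a)^{\alpha/2}\int_a^\infty\Phi(a,-y)\,dy$ can be evaluated via a hypergeometric identity (as in \eqref{rep2}) and shown to be convex by differentiating twice. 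All the difficulty is concentrated in the third, reflection term $-C_\alpha(1-a)^{\alpha/2}\int_a^1\Phi(a,-y)f_a(y)\,dy$, which involves the function $f_a$ that has no closed form. Here I would expand $f_a=\sum_{n\ge1}f_a^{(n)}$ as in \eqref{fanx}, observe that $f_a^{(1)}(x)=\int_{-a}^aP_{(a,1)}(x,y)\,dy$ is explicit (a hypergeometric function, and an arctangent for $\alpha=1$, cf.\ \eqref{ua}), show that the contribution of $f_a^{(1)}$ to $\Psi$ is convex, and then control the tail $\sum_{n\ge2}$ so tightly --- uniformly on compact subsets of $(0,1)$ and with the correct blow-up rate at the endpoints --- that it cannot destroy convexity. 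The geometric bound $\sup_{a\in[\eps,1]}\sup_{x\in(a,1)}f_a^{(n)}(x)\le(1-\delta)^{n-1}$ from Lemma~\ref{fa_continuity} is the natural starting point, but one would also need matching estimates for $\partial_a f_a^{(n)}$ and $\partial_a^2 f_a^{(n)}$.

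The main obstacle is exactly the one highlighted in the introduction: because there is no explicit Poisson kernel for the disjoint union $W(a)=(-1,-a)\cup(a,1)$, the function $f_a$, and hence $\Psi$, is only accessible through the iterated integrals \eqref{fanx}, and showing that $a\mapsto\int_a^1\Phi(a,-y)f_a(y)\,dy$ has a second derivative small enough not to overwhelm the manifestly convex pieces requires delicate uniform control on the first and second $a$-derivatives of the iterates $f_a^{(n)}$. An alternative, softer route would be to forgo convexity and prove strict unimodality directly --- for instance by showing that every critical point of $\Psi$ is a strict local minimum, so that critical points are isolated and, together with the behaviour at the endpoints, unique; or by a sliding/moving-plane argument on the free boundary problem itself excluding two distinct solutions with the same $\lambda$ and nearby free boundaries. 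I expect either approach to be genuinely harder than the one-free-boundary-point case: the monotonicity $a_1<a_2\Rightarrow f_{a_1}\le f_{a_2}$, which does hold by the comparison principle, does not by itself determine the shape of $\Psi$.
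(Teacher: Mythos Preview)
The statement you are attempting is a \emph{conjecture} in the paper, not a theorem: the authors do not prove it. Section~\ref{conjectures} presents it as open, offers numerical evidence (Figure~\ref{fig:Psigraphs}), and sketches only a \emph{rough idea} for a computer-assisted argument, explicitly disclaiming any formal proof. So there is no proof in the paper to compare against, only a heuristic strategy.

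Your reduction is the same as the paper's: via Proposition~\ref{symmetry} and Lemmas~\ref{scaling}, \ref{translation} one reduces to $D=(-1,1)$, and then the conjecture is equivalent to strict unimodality of $\Psi$ on $(0,1)$. Where you diverge is in the proposed mechanism. You aim for global strict convexity of $\Psi$ on all of $(0,1)$, mirroring Proposition~\ref{convexity}. The paper's sketch is more cautious and splits the interval: first find $a_0\in(0,1)$ such that $\Psi'(a)<0$ on $(0,a_0]$ (using that $a\mapsto g_a(y)=1-f_a(y)$ is decreasing together with the representation~\eqref{Psi_rep}), and only then argue convexity on $[a_0,1)$ via a finite truncation $\Psi=\Psi^{(1)}+\Psi^{(2)}$, where $\Psi^{(1)}$ involves $\sum_{k\le n_0}g_a^{(k)}$ and $\Psi^{(2)}$ the tail, with $\frac{d^2}{da^2}\Psi^{(1)}>\varepsilon$ verified numerically and $\bigl|\frac{d^2}{da^2}\Psi^{(2)}\bigr|\le\varepsilon$ controlled analytically.

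This split is not cosmetic. The geometric decay $(1-\delta)^{n-1}$ from the proof of Lemma~\ref{fa_continuity} is only uniform on $a\in[\eps,1)$; as $a\to0^+$ the reflection probability $\sup_{x\in(a,1)}\int_a^1 P_{(a,1)}(x,-y)\,dy$ tends to~$1$, so the series $\sum f_a^{(n)}$ does not converge uniformly near $a=0$, and your plan to control $\partial_a^2 f_a^{(n)}$ term-by-term down to $a=0$ runs into a genuine obstruction. The paper's device of handling $(0,a_0]$ by a monotonicity argument rather than a convexity argument sidesteps exactly this. Your alternative suggestions (every critical point is a strict local minimum; a moving-plane argument directly on the free boundary) are not discussed in the paper and would require new ideas. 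In short: your outline correctly identifies the target (unimodality of $\Psi$) and the bottleneck (the reflection term involving $f_a$), but neither your sketch nor the paper's constitutes a proof, and the paper's two-regime strategy is better adapted to the loss of uniformity at $a=0$ than your global-convexity ansatz.
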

A standard way to show such result is to study properties of the function $\Psi$ (which is defined by (\ref{Psi_def})). In Section \ref{proof_main_results} it is shown that  $(0,1)\ni a \mapsto \Psi(a)$ is continuous and positive and $\lim_{a \to 0^+} \Psi(a) = \lim_{a \to 1^-} \Psi(a) = \infty$. So to justify Conjecture \ref{interval_number_of_solutions} it is enough to prove that for each $\alpha \in (0,2)$ functions $(0,1)\ni a \mapsto \Psi(a)$ are unimodal. Graphs of $\Psi$ for $\alpha = 1$ and $\alpha = 1/2$ (see Figure \ref{fig:Psigraphs}) suggest that these functions have this property. It seems that it is possible to justify Conjecture \ref{interval_number_of_solutions} using a computer assisted proof. Below we present some rough idea of such a proof. Of course, this is only the idea, we are not claiming in any way that this is a formal proof.

By (\ref{D(a)-2}), we have
\begin{equation}
\label{Psi_rep}
    \Psi(a) =  C_{\alpha} (1-a)^{\alpha/2} \left( \int_1^\infty \Phi(a,y) \, dy + \int_1^\infty \Phi(a,-y) \, dy + 
		\int_a^1 \Phi(a,-y) g_a(y)\, dy \right),
\end{equation}
where $g_a(y) = 1 - f_a(y)$. Fix $\alpha \in (0,2)$. It is easy to show that $\frac{d}{d a} \Psi(a)$, $\frac{d^2}{d a^2} \Psi(a)$ are well defined for any $a \in (0,1)$. Note also that for any $y \in (0,1)$ the function $(0,y) \ni a \mapsto g_{a}(y)$ is decreasing. Using this and (\ref{Psi_rep}), one can obtain that there exists $a_0$ (depending on $\alpha$) such that $\frac{d}{d a} \Psi(a) < 0$ for all $a \in (0,a_0]$. Now it is enough to show that $\Psi$ is convex on $[a_0,1)$. Note that (cf. \ref{fanx}) for $a \in (0,1)$, $x \in (a,1)$ we have $$ g_a(x) = \sum_{n = 1}^{\infty} g_a^{(n)}(x).$$
where
\begin{equation}
\label{ganx}
g_a^{(1)}(x) = \int_{[-1,1]^c} P_{(a,1)}(x,y) \, dy, \quad g_a^{(n)}(x) = \int_a^1 P_{(a,1)}(x,-y) g_a^{(n-1)}(y) \, dy, 
\end{equation}
for $n \in \N$, $n \ge 2$. For some $n_0 \in \N$ denote $\Psi = \Psi^{(1)} + \Psi^{(2)}$, where
\begin{multline*}
\Psi^{(1)}(a) =  C_{\alpha} (1-a)^{\alpha/2} \bigg( \int_1^\infty \Phi(a,y) \, dy + \int_1^\infty \Phi(a,-y) \, dy\\ 
\quad + \int_a^1 \Phi(a,-y) \sum_{k = 1}^{n_0} g_a^{(k)}(y)\, dy \bigg),
\end{multline*}
$$
\Psi^{(2)}(a) =  C_{\alpha} (1-a)^{\alpha/2} 
		\int_a^1 \Phi(a,-y) \sum_{k = n_0 + 1}^{\infty} g_a^{(k)}(y)\, dy.
$$

Now, the strategy of the proof could be the following. One could obtain that for sufficiently large $k \in \N$ $$\left|\frac{d^2}{d a^2}\left(C_{\alpha} (1-a)^{\alpha/2} \int_a^1 \Phi(a,-y) g_a^{(k)}(y)\, dy\right)\right|$$ is sufficiently small for $a \in [a_0,1)$. Then one should be able to show that there exist $\varepsilon > 0$ and $n_0 \in \N$ so that $\frac{d^2}{d a^2} \Psi^{(1)}(a) > \varepsilon$ for $a \in [a_0,1)$ and $\left|\frac{d^2}{d a^2} \Psi^{(2)}(a)\right| \le \varepsilon$. Some numerical experiments suggest that this is possible. This would show that $(0,1)\ni a \mapsto \Psi(a)$ is unimodal. Of course, the above way of proving Conjecture \ref{interval_number_of_solutions} demands numerical estimates in many steps and it is beyond the scope of this paper.  

Well known results for the classical inner Bernoulli problem for balls (see e.g. Figure 3 in \cite{FR1997}) suggest that the following hypothesis holds.
\begin{conjecture}
\label{ball_number_of_solutions}
Let $\alpha \in (0,2)$, $d \ge 2$, $x_0 \in \R^d$, $r > 0$ and $D = \{x \in \R^d: |x - x_0| <r\}$. Then there is a constant $\lambda_{\alpha,D} > 0$ such that the Problem \ref{bernoulli_problem} has
    \begin{enumerate}
        \item exactly two solutions for $\lambda> \lambda_{\alpha,D}$,
        \item exactly one solution for $\lambda = \lambda_{\alpha,D}$,
        \item no solution for $\lambda < \lambda_{\alpha,D}$.
    \end{enumerate}
\end{conjecture}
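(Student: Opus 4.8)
The plan is to run, in $\R^d$, the same argument that proves Theorem~\ref{alpha02}, with the interval replaced by the ball and the one-shot reflection of Proposition~\ref{symmetry} replaced by the moving plane method; throughout one uses the $d$-dimensional potential theory for $(-\Delta)^{\alpha/2}$ (Poisson kernel, Green function, mean value property) in place of the one-dimensional facts recalled in Section~\ref{preliminaries}, see e.g. \cite{BBKRSV2009}, \cite{C1999}. First I would reduce to the unit ball $D=B(0,1)$: the obvious analogues of Lemmas~\ref{scaling} and \ref{translation} hold, by dilation- and translation-covariance of $(-\Delta)^{\alpha/2}$ and of $D_n^{\alpha/2}$, and they give $\lambda_{\alpha,D}=r^{-\alpha/2}\lambda_{\alpha,B(0,1)}$. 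Next I would show that for any solution $(u,K)$ the set $K$ is a ball concentric with $D$. Since $(-\Delta)^{\alpha/2}$ and the exterior data are rotation invariant, uniqueness of the Dirichlet problem \eqref{Dirichlet_problem} forces $u$ to be radial as soon as $K$ is known to be radial, so it suffices to prove that $u$ is symmetric with respect to every hyperplane through the centre. For a fixed hyperplane this is a Serrin-type argument for the fractional Laplacian: one slides parallel hyperplanes from infinity towards the centre, comparing $u$ with its reflection by the maximum principle in the moving caps, and the overdetermined condition $D_n^{\alpha/2}u=-\lambda$ on $\partial K$ forces the critical plane to pass through the centre. Hence $u$ is radial, so $\{u=1\}\cap D=\overline K$ is a union of concentric closed balls and $K$ is either a concentric ball $B(0,a)$ with $a\in(0,1)$ or a concentric annulus; the annular case, which for $d=1$ cannot occur because $K$ has to be connected, would have to be excluded by a separate comparison argument showing that $D_n^{\alpha/2}u$ cannot take the same value $-\lambda$ on the inner and the outer component of $\partial K$.

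Once $K=B(0,a)$, let $f_a$ be the bounded function that is $(-\Delta)^{\alpha/2}$-harmonic in the annulus $\{a<|x|<1\}$, equal to $1$ on $\overline{B(0,a)}$ and to $0$ on $B(0,1)^c$; by radial symmetry $D_n^{\alpha/2}f_a$ is constant on $\{|x|=a\}$, and I put $\Psi(a)=-D_n^{\alpha/2}f_a$ there. As in Sections~\ref{one_free_point} and \ref{proof_main_results}, $f_a$ together with $K=B(0,a)$ is a solution of Problem~\ref{bernoulli_problem} for $D=B(0,1)$ and $\lambda=\Psi(a)$, so the whole question reduces to the behaviour of $(0,1)\ni a\mapsto\Psi(a)$. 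The Poisson kernel of a ball for $(-\Delta)^{\alpha/2}$ is explicit, being the higher-dimensional analogue of \eqref{Poisson_interval}, and combining it with Neumann-series expansions for $f_a$ and $g_a=1-f_a$ in the annulus, in the spirit of \eqref{fanx} and \eqref{ganx}, I would show, mimicking Lemmas~\ref{fa_continuity} and \ref{psi_properties}, that $\Psi$ is continuous on $(0,1)$ and $\Psi(a)\in(0,\infty)$. For the boundary behaviour, near $a=1$ the annulus has width $1-a$ and a lower bound on the ball Poisson kernel gives $\Psi(a)\ge c\,(1-a)^{-\alpha/2}\to\infty$, while near $a=0$ a lower bound on $f_a$ off $B(0,a)$ together with the steepness of $f_a$ across $\{|x|=a\}$ gives $\Psi(a)\to\infty$, exactly in the spirit of Lemma~\ref{asymptotics_psi}. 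Continuity, positivity and these two limits already yield the weaker statement---at least two solutions for $\lambda>\lambda_{\alpha,D}$, at least one for $\lambda=\lambda_{\alpha,D}$, none below---that is, the exact analogue of Theorem~\ref{alpha02} for the ball.

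The genuinely hard part---and the reason Conjecture~\ref{ball_number_of_solutions}, like Conjecture~\ref{interval_number_of_solutions}, is only stated as a conjecture---is to prove that $\Psi$ is unimodal, which is what upgrades ``at least'' to ``exactly''. Since $f_a$ has no closed form in an annulus in any dimension, I would not expect a clean proof of strict convexity of the type of Proposition~\ref{convexity}; instead I would attempt the splitting sketched for the interval at the end of Section~\ref{conjectures}. Namely, write $\Psi=\Psi^{(1)}+\Psi^{(2)}$, where $\Psi^{(1)}$ retains only the first $n_0$ terms of the Neumann series for $g_a$ and is therefore explicit up to integrals of the explicit ball Poisson kernel; use the monotonicity of $a\mapsto g_a(y)$ to obtain $\frac{d}{da}\Psi<0$ on an initial interval $(0,a_0]$; and then try to prove $\frac{d^2}{da^2}\Psi>\varepsilon$ on $[a_0,1)$ by bounding $\frac{d^2}{da^2}\Psi^{(1)}$ from below explicitly and $\frac{d^2}{da^2}\Psi^{(2)}$ from above through a tail estimate of the form $\sup_x g_a^{(k)}(x)\le(1-\delta)^{k-1}$. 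I expect that, as in the interval case, the estimates involved here are not elementary and would ultimately require a computer-assisted argument; establishing unimodality of $\Psi$ is therefore the main obstacle, and the above is a strategy rather than a complete proof.
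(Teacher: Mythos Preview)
The statement is Conjecture~\ref{ball_number_of_solutions}, and the paper does \emph{not} prove it: immediately after stating it the authors write that ``this result is much more difficult to prove than Conjecture~\ref{interval_number_of_solutions}'' and that ``even the proof of result similar to Theorem~\ref{alpha02} for balls seems quite challenging.'' There is no proof in the paper to compare against; your proposal is a strategy for an open problem, and you acknowledge as much in your final sentence.

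Several steps you treat as routine are precisely where the difficulty lies. The moving-plane/Serrin argument forcing $K$ to be a concentric ball is only gestured at, and the standard fractional versions typically need more boundary regularity than the $C^1$ assumed in Problem~\ref{bernoulli_problem}; the annular case you explicitly defer to ``a separate comparison argument'' that you do not supply; and---most significantly---you claim the ball analogue of Theorem~\ref{alpha02} (continuity of $\Psi$ and the two limits $\Psi(a)\to\infty$) follows ``exactly in the spirit of'' the interval lemmas, whereas the paper singles out exactly this weaker statement as already ``quite challenging.'' In one dimension the proofs of Lemmas~\ref{fa_continuity}, \ref{psi_properties} and \ref{asymptotics_psi} all hinge on applying the mean-value property \eqref{reg} with $B=(a,1)$ and exploiting the explicit Poisson kernel \eqref{Pa1} of that single interval; in $d\ge 2$ no ball inside the annulus $\{a<|x|<1\}$ sees both boundary components the way $(a,1)$ does, there is no explicit Poisson kernel for the annulus, and the Neumann-series control you invoke does not transfer without substantial new ideas. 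So beyond the unimodality of $\Psi$ that you correctly flag as the main obstacle, the earlier ``soft'' steps are themselves open, which is why the paper records the statement as a conjecture.
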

It seems, however, that this result is much more difficult to prove than Conjecture \ref{interval_number_of_solutions}. Even the proof of result similar to Theorem \ref{alpha02} for balls seems quite challenging.

One of the standard ways to study Bernoulli problems (in the classical or fractional case) is to investigate appropriate variational problems (see e.g. \cite{AC1981,CRS2010, FR2022}). Fix $d \ge 1$, $\alpha \in (0,2)$ and a bounded domain $D \subset \R^d$. Let us define the energy functional on the Sobolev space $H^{\alpha/2}(\R^{d})$
$$
i_{\alpha,\lambda,D}(u) = \frac{\alpha 2^{\alpha-2} \Gamma\left(\frac{d+\alpha}{2}\right)}{\pi^{d/2} \Gamma(1-\alpha/2)}[u]_{d,\alpha} + \left(\Gamma(1+\alpha/2)\right)^2 \lambda^{2} \, |\{x \in D: \, u(x) < 1\}|
$$
depending on the parameter $\lambda > 0$, where
$$
[u]_{d,\alpha} = \iint_{\R^d \times \R^d} \frac{(u(x) -u(y))^2}{|x - y|^{d+\alpha}} \, dx \, dy,
$$
and $|\{x \in D: \, u(x) < 1\}|$ denotes the Lebesgue measure of $\{x \in D: \, u(x) < 1\}$.
A similar definition appears in Section 1.1 in \cite{FR2022}.

Let us consider the following problem of finding minimizers to this energy functional. This problem is connected with the inner Bernoulli problem for the fractional Laplacian.
\begin{problem}
\label{variational_problem}
Given $\alpha \in (0,2)$, $\lambda>0$ and a bounded domain $D\subset \R^d$, find a nontrivial minimizer $u\in H^{\alpha/2}(\R^{d})$ of $i_{\alpha,\lambda,D}$ subject to the constraint $u=0$ on $D^c$.
\end{problem}

For $\alpha = 1$ this variational problem was studied in Section 3 in \cite{JKS2022}. In that paper the problem was investigated for general bounded domains. For the particular case when $D$ is an interval \cite{JKS2022} implies the following result.
\begin{proposition}
\label{variational_solution}
Let $x_0 \in \R$, $r > 0$ and $D = (x_0-r,x_0+r)$. There exists a constant $\Lambda_{1,D}$ such that for any $\lambda \ge \Lambda_{1,D}$ there exists a solution $u$ of Problem \ref{variational_problem} for $\alpha = 1$, $\lambda$, $D$, which is symmetric with respect to $x_0$, continuous on $\R$ and nonincreasing on $[x_0,\infty)$. The function $u$ and $K = \{x \in \R: u(x) = 1\}$ is a solution of Problem \ref{bernoulli_problem} for $\alpha = 1$, $\lambda$, $D$. For any $\lambda \in (0,\Lambda_{1,D})$ there are no solutions of Problem \ref{variational_problem} for $\alpha = 1$, $\lambda$, $D$.
\end{proposition}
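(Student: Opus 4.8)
The plan is to derive the statement from the general theory developed in \cite{JKS2022} for Problem \ref{variational_problem} with $\alpha=1$ and arbitrary bounded domains, adding only the symmetry and monotonicity, which are particular to the interval. From \cite{JKS2022} I would take: the existence of a threshold $\Lambda_{1,D}>0$ below which Problem \ref{variational_problem} (for $\alpha=1$, $\lambda$, $D$) has no nontrivial minimizer and at or above which it has one; the fact that any minimizer can be chosen with values in $[0,1]$, since truncating to $[0,1]$ does not increase $[u]_{1,1}$ and leaves $|\{x\in D:u(x)<1\}|$ unchanged; the nondegeneracy $|\{u=1\}|>0$ for $\lambda\ge\Lambda_{1,D}$; the continuity of a minimizer on $\R$; and the Euler--Lagrange/domain-variation analysis showing that $(u,\{u=1\})$ solves Problem \ref{bernoulli_problem} with $D_n^{1/2}u=-\lambda$ on the free boundary. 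Granting all this, the only thing left is to exhibit, for each $\lambda\ge\Lambda_{1,D}$, one minimizer that is symmetric about $x_0$ and nonincreasing on $[x_0,\infty)$.

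By translation invariance of the functional $i_{1,\lambda,D}$ I may assume $x_0=0$, so $D=(-r,r)$. Given a minimizer $u$ with values in $[0,1]$, let $u^{*}$ be its symmetric decreasing rearrangement about $0$. The fractional P\'olya--Szeg\H{o} inequality gives $[u^{*}]_{1,1}\le[u]_{1,1}$. Since $\operatorname{supp}u\subseteq\overline D$, the set $\operatorname{supp}u^{*}$ is an interval of the same length centred at $0$, hence $\subseteq\overline D$, so $u^{*}\in H^{1/2}(\R)$ vanishes on $D^{c}$ and is admissible. Rearrangement preserves the measure of every superlevel set; in particular $|\{u^{*}=1\}|=|\{u=1\}|$ (both functions being $\le 1$), whence
\[
|\{x\in D:u^{*}(x)<1\}|=|D|-|\{u^{*}=1\}|=|D|-|\{u=1\}|=|\{x\in D:u(x)<1\}|.
\]
Therefore $i_{1,\lambda,D}(u^{*})\le i_{1,\lambda,D}(u)$, so $u^{*}$ is again a minimizer, now symmetric about $0$ and nonincreasing on $[0,\infty)$, and with $|\{u^{*}=1\}|>0$.

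It remains to read off the Bernoulli solution. Symmetry, monotonicity and $|\{u^{*}=1\}|>0$ force $K:=\{u^{*}=1\}=[-a,a]$ for some $a\in(0,r)$, so $D\setminus K=(-r,-a)\cup(a,r)$. On this set the Euler--Lagrange equation yields $(-\Delta)^{1/2}u^{*}=0$, while $u^{*}=1$ on $[-a,a]$ and $u^{*}=0$ on $(-\infty,-r]\cup[r,\infty)$; by uniqueness for the outer Dirichlet problem (see the discussion after \eqref{Dirichlet_problem}), $u^{*}$ coincides --- up to the scaling $r=1$ --- with the function $f_a$ of Section \ref{proof_main_results}, and in particular $u^{*}$ is continuous on $\R$ because its exterior data is continuous on the complement of $D\setminus K$. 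The free boundary condition $D_n^{1/2}u^{*}(\pm a)=-\lambda$ is exactly the criticality condition coming from outer variations in \cite{JKS2022}, equivalently $\Psi(a)=\lambda$ with $\Psi$ as in \eqref{Psi_def}. Translating back by $x_0$ gives the minimizer with all the stated properties; the nonexistence for $\lambda\in(0,\Lambda_{1,D})$ is quoted directly from \cite{JKS2022}.

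The main obstacle is not in the rearrangement step --- which only uses the fractional P\'olya--Szeg\H{o} inequality together with the observation that the penalty $|\{x\in D:u(x)<1\}|$ is rearrangement-invariant precisely because $D$ is itself a ball centred at $x_0$ --- but rather in checking that the general statements of \cite{JKS2022} (continuity of minimizers, validity and form of the free boundary condition, nondegeneracy at the threshold) do apply to the interval and translate into exactly the hypotheses of Problem \ref{bernoulli_problem}; a secondary point is ensuring the coincidence set of the rearranged minimizer is a nondegenerate interval so that $K$ qualifies as a domain of class $C^{1}$, which is guaranteed since rearrangement preserves $|\{u=1\}|>0$.
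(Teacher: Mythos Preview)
Your proposal is correct and follows essentially the same route as the paper. The paper's own proof reduces to $x_0=0$, $r=1$, then invokes ``arguments as in the proof of Lemma 2.5 in \cite{JKS2022}'' to obtain a symmetric, nonincreasing, continuous minimizer, and finally cites Theorem 1.7 of \cite{JKS2022} for the free boundary condition and the identification with Problem~\ref{bernoulli_problem}; you simply spell out the rearrangement argument (fractional P\'olya--Szeg\H{o} plus invariance of $|\{u=1\}|$) that underlies that lemma, and otherwise cite the same results from \cite{JKS2022}.
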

Note that the above result does not guarantee the uniqueness of solution of Problem \ref{variational_problem}. The results for the classical Bernoulli problem for balls suggest that uniqueness holds (cf. Conjecture \ref{variational} below).
\begin{proof}
By translation and scaling we may assume that $x_0 = 0$ and $r = 1$. By arguments as in the proof of Lemma 2.5 in \cite{JKS2022} (by changing in that proof $E_{\lambda}$ to $I_{\lambda,(-1,1)}$), we obtain that there exists a solution of Problem \ref{variational_problem}, which is symmetric with respect to $0$, continuous on $\R$ and nonincreasing on $[0,\infty)$. It follows that $\{x \in \R: \, u(x) = 1\} = [-a,a]$ for some $a \in (0,1)$. By Theorem 1.7 (d) in \cite{JKS2022},
$$
\lim_{t \to 0^+} \frac{u(a) - u(a+t)}{\sqrt{t}} = \lambda.
$$
Using again Theorem 1.7 in \cite{JKS2022}, we obtain that $u$ and $K = \{x \in \R: u(x) = 1\}$ is a solution of Problem \ref{bernoulli_problem} for $\alpha = 1$, $\lambda$, $D = (-1,1)$.
\end{proof}

The next result gives an inequality between the variational constant $\Lambda_{1,D}$ and the Bernoulli constant $\lambda_{1,D}$ for any interval $D$. The proof of this result is computer-assisted.
\begin{proposition}
\label{inequality}
Let $x_0 \in \R$, $r > 0$ and $D = (x_0-r,x_0+r)$. We have 
\begin{equation}
\label{lambda_inequality}
\Lambda_{1,D} > \lambda_{1,D}.
\end{equation}
\end{proposition}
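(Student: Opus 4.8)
The plan is to rescale to $D=(-1,1)$ (both constants scale like $r^{-1/2}$: $\lambda_{1,D}$ by Theorem~\ref{alpha02}, and $\Lambda_{1,D}$ by the obvious scaling of Problem~\ref{variational_problem}), and then argue by contradiction: suppose $\Lambda_{1,(-1,1)}=\lambda_{1,(-1,1)}$. Note first that $\Lambda_{1,D}\ge\lambda_{1,D}$ always, since by Proposition~\ref{variational_solution} a variational solution at level $\lambda$ is a solution of Problem~\ref{bernoulli_problem}, and by Theorem~\ref{alpha02} such solutions exist only for $\lambda\ge\lambda_{1,D}$. Recall also, from the proof of Theorem~\ref{alpha02}, that $\lambda_{1,(-1,1)}=\min_{a\in(0,1)}\Psi(a)$, attained at some $a_\ast$ (the minimum is attained because $\Psi$ is continuous, positive on $(0,1)$ and $\to\infty$ at both endpoints by Lemma~\ref{asymptotics_psi}).

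Under the assumption $\Lambda_{1,(-1,1)}=\lambda_{1,(-1,1)}=:\lambda$, Proposition~\ref{variational_solution} gives a minimizer of $i_{1,\lambda,(-1,1)}$ which is symmetric and nonincreasing on $[0,\infty)$; hence its contact set is $[-a,a]$ for some $a\in(0,1)$, it coincides with $f_a$, and $\Psi(a)=\lambda=\min_b\Psi(b)$. Writing $c_1,c_2$ for the two constants appearing in $i_{1,\lambda,D}$, the energy of $f_a$ equals $c_1[f_a]_{1,1}+2c_2\lambda^2(1-a)$, while the trivial competitor $u\equiv 0$ has energy $2c_2\lambda^2$. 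Since $f_a$ is a minimizer, $c_1[f_a]_{1,1}\le 2c_2\lambda^2 a$. The plan is to contradict this by proving
$$
c_1[f_b]_{1,1} > 2c_2\Big(\min_c\Psi(c)\Big)^{2}\, b \qquad\text{for every } b\in(0,1).
$$
The tool is the Hadamard-type first-variation identity $\frac{d}{db}\big(c_1[f_b]_{1,1}\big)=2c_2\,\Psi(b)^2$ — this is precisely the identity that makes solutions of Problem~\ref{bernoulli_problem} the critical points of the reduced energy $b\mapsto c_1[f_b]_{1,1}+2c_2\lambda^2(1-b)$, and the constant in front of $\lambda^2$ in $i_{\alpha,\lambda,D}$ is chosen so that the Euler–Lagrange free-boundary condition is exactly $D_n^{\alpha/2}u=-\lambda$. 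Combined with $c_1[f_0]_{1,1}=0$ it yields $c_1[f_b]_{1,1}=2c_2\int_0^b\Psi(t)^2\,dt$, so that $c_1[f_b]_{1,1}/b$ is $2c_2$ times the average of $\Psi^2$ over $(0,b)$. Because $\Psi(t)^2\ge(\min_c\Psi(c))^2$ for all $t$, with strict inequality on a set of positive measure in $(0,b)$ (indeed $\Psi(t)\to\infty$ as $t\to 0^+$ by Lemma~\ref{asymptotics_psi}), this average strictly exceeds $(\min_c\Psi(c))^2$; this gives the displayed inequality and the desired contradiction, so $\Lambda_{1,(-1,1)}>\lambda_{1,(-1,1)}$.

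The main obstacle is supplying the first-variation identity with full rigour. Done abstractly it requires the fine boundary regularity of $f_a$ at the free boundary points $\pm a$ together with a domain-variation computation, most cleanly carried out on the Caffarelli–Silvestre extension of $f_a$. Alternatively — and this is where the computer-assisted character of the proof comes in — one can avoid the identity and instead bound $c_1[f_b]_{1,1}$ directly from below through the representation $c_1[f_b]_{1,1}=\int_{-b}^{b}(-\Delta)^{1/2}f_b(x)\,dx$ (valid since $(-\Delta)^{1/2}f_b=0$ on $(-1,-b)\cup(b,1)$, $f_b=1$ on $[-b,b]$ and $f_b=0$ off $(-1,1)$), using the series $g_b=\sum_n g_b^{(n)}$ from \eqref{ganx}, and then check against a rigorous upper bound for $\min_c\Psi(c)$ (for instance the one in Theorem~\ref{estimates_lambda}, or a direct interval-arithmetic evaluation of $\Psi$ near its minimum) that $\inf_{b\in(0,1)} c_1[f_b]_{1,1}/(2c_2 b)>(\min_c\Psi(c))^2$. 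Obtaining these bounds uniformly in $b$ — delicate as $b\to 1^-$, where $[f_b]_{1,1}$ blows up, and as $b\to 0^+$ — and controlling the tail of the series are the technical heart of the argument.
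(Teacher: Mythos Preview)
Your plan differs substantially from the paper's. The paper does not pass through the reduced energy $b\mapsto c_1[f_b]_{1,1}$ at all; it simply produces two independent numerical bounds. For the lower bound on $\Lambda_{1,(-1,1)}$ it estimates $[u]_{1,1}$ for an arbitrary minimizer $u$ by restricting the double integral to regions determined by the level sets $\{u=1\}=[-a,a]$ and $\{u\ge 1/2\}=[-b,b]$, obtaining $\Lambda_{1,(-1,1)}^2\ge F_1(a,b)$ for an explicit elementary function whose infimum over $0<a<b<1$ is checked (via Mathematica) to exceed $1.158$, hence $\Lambda_{1,(-1,1)}\ge 1.0761$. For the upper bound on $\lambda_{1,(-1,1)}=\min_a\Psi(a)$ it replaces $f_a$ by $f_a^{(1)}$ in~\eqref{D(a)-2} and evaluates the resulting majorant at $a=0.34$, getting $\lambda_{1,(-1,1)}<1.03$. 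The computer assistance consists of these two evaluations; the argument is then ``lower bound $>$ upper bound''.

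Your route is more structural: granted the Hadamard identity $\tfrac{d}{db}(c_1[f_b]_{1,1})=2c_2\Psi(b)^2$, the conclusion follows with no numerics, since the average of $\Psi^2$ over $(0,b)$ strictly exceeds $(\min\Psi)^2$ by Lemma~\ref{asymptotics_psi}. That would be a real improvement over the paper's proof. The gap is that the identity is not established: you locate the difficulty correctly --- a domain-variation computation on the Caffarelli--Silvestre extension, with a mixed Dirichlet/Neumann condition and a moving interface on the thin boundary --- but carry out neither this nor the alternative representation $c_1[f_b]_{1,1}=\int_{-b}^b(-\Delta)^{1/2}f_b$ (which itself needs justification, as $(-\Delta)^{1/2}f_b$ blows up like $\dist(\cdot,\{\pm b\})^{-1/2}$, and matching the constant to $2c_2$ is not automatic). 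As written the proposal is a promising outline rather than a proof; the paper's cruder numerical comparison actually closes the argument.
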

It is well known that the analogous result holds for the classical inner Bernoulli problem on a ball (see e.g. Example 11 in \cite{FR1997}).
\begin{proof}
In the whole proof we put $\alpha = 1$. By translation and scaling, we may assume that $x_0 = 0$ and $r = 1$ (so $D = (-1,1)$). First, we estimate $\Lambda_{1,D}$ from below. By Proposition \ref{variational_solution}, the solution $u$ of Problem \ref{variational_problem} is continuous on $\R$ and nonincreasing on $[0,\infty)$. Let $a = \max\{x\in (0,1): \, u(x) = 1\}$ and $b = \max\{x\in (0,1): \, u(x) = 1/2\}$. By arguments from the proof of Lemma 1.10 from \cite{JKS2022}, we have 
$$
\frac{\pi}{4} \Lambda_{1,D}^2 \ge \frac{\frac{1}{2\pi}}{2a} [u]_{1,1},
$$
so
\begin{align*}
\Lambda_{1,D}^2 &\ge \frac{1}{\pi^2 a} [u]_{1,1}\\
&\ge \frac{2}{\pi^2 a} \int_{-a}^a \int_{(-1,1)^c} \frac{(u(x) - u(y))^2}{|x-y|^2} \, dx \, dy\\
& \quad + \frac{2}{\pi^2 a} \int_{-a}^a \int_{(-1,-b) \cup (b,1)} \frac{(u(x) - u(y))^2}{|x-y|^2} \, dx \, dy\\
& \quad + \frac{2}{\pi^2 a} \int_{(-b,-a) \cup (a,b)} \int_{(-1,1)^c} \frac{(u(x) - u(y))^2}{|x-y|^2} \, dx \, dy\\
&= \text{I} + \text{II} + \text{III}.
\end{align*}

We have
\begin{align*}
    \text{I} &= \frac{2}{\pi^2 a} \int_{-a}^a \int_{(-1,1)^c} \frac{dx \, dy}{|x-y|^2}  =  \frac{2}{\pi^2 a} \log\left(\frac{(1+a)^2}{(1-a)^2}\right), \\
    \text{II} &= \frac{2}{\pi^2 a} 2 \int_{-a}^a \int_{b}^1 \frac{(1/4) \, dx \, dy}{|x-y|^2}  =  \frac{1}{\pi^2 a} \log\left(\frac{(1-a)(a+b)}{(1+a)(b-a)}\right), \\
    \text{III} &= \frac{2}{\pi^2 a} 2 \int_{a}^b \int_{(-1,1)^c} \frac{(1/4) \, dx \, dy}{|x-y|^2}  =  \frac{1}{\pi^2 a} \log\left(\frac{(1-a)(1+b)}{(1+a)(1-b)}\right).
\end{align*}
For $0 < a < b < 1$ put 
\begin{multline*}
   F_1(a,b) = \frac{2}{\pi^2 a} \log\left(\frac{(1+a)^2}{(1-a)^2}\right) + \frac{1}{\pi^2 a} \log\left(\frac{(1-a)(a+b)}{(1+a)(b-a)}\right) \\ + \frac{1}{\pi^2 a} \log\left(\frac{(1-a)(1+b)}{(1+a)(1-b)}\right). 
\end{multline*}
Using Mathematica, we find that
$$
\inf_{0 < a < b < 1} F_1(a,b) \ge 1.1582,
$$
so $\Lambda_{1,D}^2 \ge 1.1582$, which gives
\begin{equation}
\label{lambda_estimate}
\Lambda_{1,D} \ge 1.0761.
\end{equation}

In order to obtain upper bound estimate of $\lambda_{1,D}$, we need to estimate $\Psi(a)$. We will use formula (\ref{D(a)-2}). 
The term $f_a(y)$, present in this formula, needs to be estimated from below. We have $$f_a(y) = \sum_{n = 1}^{\infty} f_a^{(n)}(y),$$ where $f_a^{(n)}$ is given by (\ref{fanx}).

Note that for any $n \in \N$, $a \in (0,1)$ and $y \in (a,1)$ we have $f_a^{(n)}(y) > 0$. Hence $f_a(y) > f_a^{(1)}(y)$. For any $a \in (0,1)$ and $y \in (a,1)$ we have
$$
f_a^{(1)}(y) = \int_{-a}^a P_{(a,1)}(y,z) \, dz = 1 - \frac{2}{\pi} \arctan\left(\frac{\sqrt{1+a}\sqrt{y-a}}{\sqrt{2a}\sqrt{1-y}}\right).
$$
For any $a \in (0,1)$ put
$$
F_2(a) = \frac{1}{\pi} (1-a)^{\alpha/2} \left( \int_1^\infty \Phi(a,y) \, dy + \int_a^\infty \Phi(a,-y) \, dy - \int_a^1 \Phi(a,-y) f_a^{(1)}(y)\, dy \right),
$$
where $\Phi$ is given by (\ref{phi_formula}). By (\ref{D(a)-2}), we obtain $\Psi(a) \le F_2(a)$ for any $a \in (0,1)$. Using Mathematica, we find that $\min_{a \in (0,1)} F_2(a) < 1.03$. Indeed, one can check, using Mathematica, that $F_2(0.34) < 1.03$. Hence
$$
\lambda_{1,D} = \min_{a \in (0,1)} \Psi(a) < 1.03.
$$
This and (\ref{lambda_estimate}) gives (\ref{lambda_inequality}).
\end{proof}

Proposition \ref{inequality} implies the following result.
\begin{corollary}
\label{nonvariational}
For $\alpha = 1$ and any interval $D$ there exists a solution for the inner Bernoulli problem for the fractional Laplacian on $D$, which is not a minimizer of the corresponding variational problem on $D$ (Problem \ref{variational_problem}).
\end{corollary}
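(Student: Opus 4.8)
The plan is to exploit the strict gap between the Bernoulli constant $\lambda_{1,D}$ and the variational constant $\Lambda_{1,D}$ furnished by Proposition~\ref{inequality}, together with the existence statements of Theorem~\ref{alpha02} and Proposition~\ref{variational_solution}. First I would note that, since $\Lambda_{1,D} > \lambda_{1,D}$, the interval $(\lambda_{1,D},\Lambda_{1,D})$ is nonempty, so one may fix some $\lambda$ with $\lambda_{1,D} < \lambda < \Lambda_{1,D}$. Because $\lambda > \lambda_{1,D}$, Theorem~\ref{alpha02}(1) guarantees that Problem~\ref{bernoulli_problem} for $\alpha = 1$, this $\lambda$, and $D$ has at least two solutions; in particular there exists at least one solution $(u,K)$ of the inner Bernoulli problem on $D$.

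Next I would check that this $u$ is not a minimizer of the variational problem. The function $u$ is continuous, takes values in $[0,1]$, equals $1$ on the nonempty set $\overline{K}$ and vanishes on $D^c$; hence $u$ is nontrivial and obeys the constraint $u = 0$ on $D^c$. Were $u$ a minimizer of $i_{1,\lambda,D}$ among such competitors, then $u$ would be a solution of Problem~\ref{variational_problem} for $\alpha = 1$, $\lambda$, $D$. But $\lambda < \Lambda_{1,D}$, and by the final assertion of Proposition~\ref{variational_solution} there are no solutions of Problem~\ref{variational_problem} for any $\lambda \in (0,\Lambda_{1,D})$. This contradiction shows that the Bernoulli solution $u$ on $D$ is not a minimizer of Problem~\ref{variational_problem}, which is the claim.

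There is essentially no hard analytic step left: the whole weight of the argument rests on Proposition~\ref{inequality}, i.e.\ on the computer-assisted strict inequality $\Lambda_{1,D} > \lambda_{1,D}$, and on the already-established existence dichotomies; the remainder is a soft pigeonhole argument choosing $\lambda$ in the gap. The only points requiring a moment's care are (i) verifying that a Bernoulli solution is an admissible, nontrivial competitor for Problem~\ref{variational_problem} — immediate from $u \ge 0$, $u \not\equiv 0$, and $u = 0$ on $D^c$ (and if one worries whether $u \in H^{1/2}(\R)$, note that a minimizer lies in $H^{1/2}(\R)$ by definition, so if $u$ failed to be in $H^{1/2}(\R)$ it could not be a minimizer either) — and (ii) observing that Proposition~\ref{inequality} is stated for every interval $D$, so no separate translation/scaling reduction to $D = (-1,1)$ is needed at this stage.
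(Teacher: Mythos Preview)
Your argument is correct and is exactly the intended one: the paper does not write out a proof but simply states that Proposition~\ref{inequality} implies the corollary, and your deduction---pick $\lambda \in (\lambda_{1,D},\Lambda_{1,D})$, invoke Theorem~\ref{alpha02} for existence and Proposition~\ref{variational_solution} for nonexistence of minimizers---is precisely how that implication is unpacked.
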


For any interval $D$ and $\lambda \ge \Lambda_{1,D}$ we have at least 2 solutions of Problem \ref{bernoulli_problem}. One may ask how many of them are solutions of Problem \ref{variational_problem}. Knowing results for the classical inner Bernoulli problem for balls (see e.g. Section 5.3 in \cite{FR1997}) and Propositions \ref{variational_solution}, \ref{inequality}, we may formulate the following hypothesis. 
\begin{conjecture}
\label{variational}
Let $\alpha \in (0,2)$, $x_0 \in \R$, $r > 0$ and $D = (x_0-r,x_0+r)$. There exists a constant $\Lambda_{\alpha,D}$ such that $\Lambda_{\alpha,D} > \lambda_{\alpha,D}$. For any $\lambda \ge \Lambda_{\alpha,D}$ there exists a unique solution of Problem \ref{variational_problem}. It is symmetric with respect to $x_0$, continuous on $\R$, nonincreasing on $[x_0,\infty)$ and it is a solution of Problem \ref{bernoulli_problem}. For any $\lambda \in (0,\Lambda_{\alpha,D})$ there are no solutions of Problem \ref{variational_problem}.
\end{conjecture}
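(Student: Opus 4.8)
The plan is to treat Problem~\ref{variational_problem} for $\alpha=1$ by the direct method of the calculus of variations, reduce the minimization to a one–parameter problem indexed by the size of the plateau $\{u=1\}$, and then read off both the existence threshold and the free boundary condition from the function $\Psi$ already analysed in Section~\ref{proof_main_results}. By Lemmas~\ref{scaling} and \ref{translation} (and the corresponding scaling of $i_{1,\lambda,D}$ under $u\mapsto u(\cdot/s)$, noting that in dimension one $[\cdot]_{1,1}$ is scale invariant when $\alpha=1$) it suffices to treat $D=(-1,1)$. Restricting to competitors with $0\le u\le 1$ costs nothing, since truncating $u$ to $[0,1]$ does not increase $[u]_{1,1}$ and leaves the set $\{u<1\}$ unchanged. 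For a minimizing sequence $(u_n)$ the seminorm $[u_n]_{1,1}$ is bounded, so, using $u_n=0$ on $D^c$ and the compact embedding of $H^{1/2}$ into $L^2$ on the bounded set $D$, I pass to a subsequence converging a.e. The seminorm is weakly lower semicontinuous, and so is the measure term: since $\chi_{\{u<1\}}\le\liminf\chi_{\{u_n<1\}}$ a.e., Fatou gives $|\{u<1\}|\le\liminf|\{u_n<1\}|$. Hence the limit is a minimizer, and symmetric decreasing rearrangement produces one that is symmetric and nonincreasing on $[0,\infty)$, because the rearrangement does not increase $[\cdot]_{1,1}$ (Riesz rearrangement) and preserves $|\{u=1\}|$, keeping the support inside $[-1,1]$.

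Being symmetric and nonincreasing, such a minimizer equals $1$ on a (possibly empty) interval $[-a,a]$ and is fractional harmonic on $(-1,-a)\cup(a,1)$, where only $[\cdot]_{1,1}$ is active; harmonic replacement lowers the seminorm, and by the maximum principle this replacement is exactly the function $f_a$ of Section~\ref{proof_main_results}. The energy thus reduces to the one–parameter function
\begin{equation*}
\mathcal{E}(a) = \frac{1}{2\pi}\,[f_a]_{1,1} + \frac{\pi}{2}\,\lambda^2(1-a), \qquad a\in(0,1),
\end{equation*}
together with the trivial competitor $u\equiv 0$, whose energy is $\mathcal{E}(0^+)=\tfrac{\pi}{2}\lambda^2$. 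The decisive link is a Hadamard–type shape–derivative identity $\tfrac{d}{da}[f_a]_{1,1}=\pi^2\,\Psi(a)^2$, where $\Psi$ is defined in \eqref{Psi_def}; the constant $(\Gamma(1+\alpha/2))^2$ in the measure term is calibrated precisely so that this yields $\mathcal{E}'(a)=\tfrac{\pi}{2}\bigl(\Psi(a)^2-\lambda^2\bigr)$. Consequently the interior critical points of $\mathcal{E}$ are exactly the zeros of $\Psi(a)-\lambda$, i.e. the Bernoulli solutions with $K=(-a,a)$ produced in Section~\ref{proof_main_results}.

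For the threshold analysis, Lemma~\ref{asymptotics_psi} and continuity give that $\Psi$ attains its minimum $\lambda_{1,D}$ and blows up at both ends, so for $\lambda>\lambda_{1,D}$ the equation $\Psi(a)=\lambda$ has a largest root $a_2(\lambda)$, at which $\mathcal{E}$ has a local minimum. Setting $h(\lambda)=\tfrac{\pi}{2}\lambda^2-\mathcal{E}\bigl(a_2(\lambda)\bigr)$ and using the envelope theorem (the $a$–derivative vanishes at $a_2$) one finds $h'(\lambda)=\pi\lambda\,a_2(\lambda)>0$; since $h(\lambda_{1,D})<0$ (there $\mathcal{E}$ is nondecreasing, so the trivial profile wins, which moreover forces $\Lambda_{1,D}>\lambda_{1,D}$) while $h>0$ for large $\lambda$, there is a unique $\Lambda_{1,D}$ with $h\ge 0\iff\lambda\ge\Lambda_{1,D}$. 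For $\lambda\ge\Lambda_{1,D}$ the nontrivial profile $f_{a_2}$ is a global minimizer; being a critical point it satisfies the free boundary condition $D_n^{1/2}u=-\lambda$, hence solves Problem~\ref{bernoulli_problem}, and continuity on $\R$ follows from the Dirichlet problem theory of Section~\ref{preliminaries}. For $\lambda<\Lambda_{1,D}$ every nontrivial competitor has energy strictly above $\tfrac{\pi}{2}\lambda^2$, so no nontrivial minimizer exists.

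I expect the main obstacle to be the rigorous proof of the shape–derivative identity $\tfrac{d}{da}[f_a]_{1,1}=\pi^2\Psi(a)^2$, together with the free boundary regularity needed to justify the plateau structure and the precise $H^{1/2}$ behaviour of $f_a$ near the contact point; these nonlocal Pohozaev–type computations are delicate because of nonlocality and the singular slope at the free boundary. In practice these two ingredients can be imported from the variational analysis of \cite{JKS2022}: existence together with the symmetric, continuous, nonincreasing structure as in its Lemma~2.5, and the slope identity $\lim_{t\to0^+}(u(a)-u(a+t))/\sqrt{t}=\lambda$ as in its Theorem~1.7, which is exactly $\Psi(a)=\lambda$ and identifies the minimizer as a solution of Problem~\ref{bernoulli_problem}.
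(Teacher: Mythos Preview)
The statement you are attempting to prove is labeled \emph{Conjecture} in the paper and is explicitly left open; the paper provides no proof of it. What the paper does prove, for $\alpha=1$ only, is the weaker Proposition~\ref{variational_solution} (existence of a symmetric minimizer for $\lambda\ge\Lambda_{1,D}$, nonexistence below, and that any such minimizer solves Problem~\ref{bernoulli_problem}) together with Proposition~\ref{inequality} ($\Lambda_{1,D}>\lambda_{1,D}$, via a computer-assisted estimate). The full conjecture adds two ingredients the paper does not establish: the case of general $\alpha\in(0,2)$, and \emph{uniqueness} of the variational minimizer. The paper says explicitly, right after Proposition~\ref{variational_solution}, that ``the above result does not guarantee the uniqueness of solution of Problem~\ref{variational_problem}''.

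Your proposal does not close these gaps. First, it is written only for $\alpha=1$. Second, even there your argument does not yield uniqueness: you produce a global minimizer $f_{a_2(\lambda)}$ via the one-parameter reduction and the monotonicity of $h(\lambda)$, but nothing in your outline rules out a second interior minimum of $\mathcal{E}$ at the same energy level. That would follow if $\Psi$ were unimodal, but unimodality of $\Psi$ is precisely Conjecture~\ref{interval_number_of_solutions}, also left open in the paper. Third, the shape-derivative identity $\tfrac{d}{da}[f_a]_{1,1}=\pi^2\Psi(a)^2$ that drives your whole computation is asserted, not proved; you acknowledge this, but the references to \cite{JKS2022} you invoke supply only existence, symmetry, and the slope condition at a minimizer, not the Hadamard formula for general $a$, and not uniqueness. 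Finally, your claim that $h(\lambda_{1,D})<0$ because ``there $\mathcal{E}$ is nondecreasing'' is not justified: at $\lambda=\lambda_{1,D}$ one has $\mathcal{E}'(a)=\tfrac{\pi}{2}(\Psi(a)^2-\lambda_{1,D}^2)\ge 0$ only if $\lambda_{1,D}=\min\Psi$, which is true, but ``nondecreasing'' then gives $\mathcal{E}(a)\ge\mathcal{E}(0^+)$ with equality only at $a=0$, so $h(\lambda_{1,D})\le 0$; strictness requires the additional observation that $\Psi$ is not identically $\lambda_{1,D}$ on any interval near $0$, which you have not argued.
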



\begin{thebibliography}{99}
\bibliographystyle{plain}

\bibitem{A1989} A. Acker, {\em Uniqueness and monotonicity of solutions for the interior Bernoulli free boundary problem in the convex $n$-dimensional case'}, Nonlinear Anal.  13, No. 12 (1989), 1409-1425.

\bibitem{AC1981} H. W. Alt, L. A. Caffarelli, {\em Existence and regularity for a minimum problem with a free boundary}, J. Reine Angew. Math. 325 (1981), 105-144.

\bibitem{BS2009} C. Bianchini, P. Salani, {\em Concavity properties for elliptic free boundary problems}, Nonlinear Anal. 71, No. 10 (2009), 4461-4470.

\bibitem{BB2000} K. Bogdan, T. Byczkowski, {\em Potential theory of Schrödinger operator based on fractional Laplacian}, Probab. Math. Statist. 20 (2000), 293-335. 

\bibitem{BBKRSV2009} K. Bogdan, T. Byczkowski, T. Kulczycki, M. Ryznar, R. Song, Z. Vondracek, {\em Potential Analysis of Stable Processes and its Extensions}, Lecture Notes in Mathematics 1980, Springer, Berlin (2009).

\bibitem{BKK2008} K. Bogdan, T. Kulczycki, M. Kwa{\'s}nicki, \emph{Estimates and structure of $\alpha$-harmonic functions}, Probab. Theory Relat. Fields 140 (2008), 345-381.

\bibitem{CMS2012} L. A. Caffarelli, A. Mellet, Y. Sire, {\em Traveling waves for a boundary reaction–diffusion
equation},  Adv. Math. 230 (2012), no. 2, 433-457. 

\bibitem{CRS2010} L. A. Caffarelli, J.-M. Roquejoffre, Y. Sire, \emph{Variational problems with free boundaries for the fractional Laplacian} J. Eur. Math. Soc. 12, No. 5 (2010), 1151-1179.

\bibitem{CS2007} L. A. Caffarelli, L. Silvestre, {\em An Extension Problem Related to the Fractional Laplacian}, Comm. Partial Differential Equations 32 (2007), 1245-1260.

\bibitem{CT2002} P. Cardaliaguet, R. Tahraoui, {\em Some uniqueness results for the Bernoulli interior free-boundary problems in convex domains},  Electron. J. Differential Equations  (2002), n. 102, 1-16.

\bibitem{C1999}  Z.-Q. Chen, {\em Multidimensional symmetric stable processes}, Korean J. Comput. Appl. Math. 6 (1999), no. 2, 227-266.

\bibitem{SR2012} D. De Silva, J. M. Roquejoffre, {\em Regularity in a one-phase free boundary problem for the fractional Laplacian}, Ann. Inst. H. Poincare  Anal.  Non  Lineaire 29, No. 3 (2012), 335-367.

\bibitem{DSS2015b} D. De Silva, O. Savin, {\em {$C^\infty$} regularity of certain thin free boundaries}, Indiana Univ. Math. J. 64, No. 5 (2015), 1575-1608.

\bibitem{DSS2015} D. De Silva, O. Savin, {\em Regularity of {L}ipschitz free boundaries for the thin one-phase problem}, J. Eur. Math. Soc. (JEMS) 17, No. 6 (2015), 1293-1326.

\bibitem{SSS2014} D. De Silva, O. Savin, Y. Sire, {\em A  one-phase  problem  for the  fractional  Laplacian:  regularity  of  flat  free boundaries}, Bull. Inst. Math. Acad. Sin. (N.S.) 9, No. 1 (2014), 111-145.

\bibitem{EKPSS2019} M. Engelstein, A. Kauranen, M. Prats, G. Sakellaris, and Y. Sire, {\em Minimizers for the thin one-phase free boundary problem},  Comm. Pure Appl. Math. 74 (2021), no. 9, 1971-2022.

\bibitem{FR2022} X. Fern\'andez-Real and X. Ros-Oton, {\em Stable cones in the thin one-phase problem}, Amer. J. Math., in press (2022).

\bibitem{FR1997} M. Flucher, M. Rumpf, {\em Bernoulli's free-boundary problem, qualitative theory and numerical approximation}, J. Reine Angew. Math. 486 (1997), 165-204.

\bibitem{HS1997} A. Henrot, H. Shahgholian, {\em Convexity of free boundaries with Bernoulli type boundary condition}  Nonlinear Anal.  28,  No. 5  (1997), 815-823.

\bibitem{HS2000} A. Henrot, H. Shahgholian, {\em Existence of classical solution to a free boundary problem for the $p$-Laplace operator: (II) The interior convex case},  Indiana Univ. Math. J.  49,  No. 1  (2000), 311-323. 

\bibitem{JKS2022} S. Jarohs, T. Kulczycki, P. Salani, {\em On the Bernoulli free boundary problems for the half Laplacian and for the spectral half Laplacian},  Nonlinear Anal. 222 (2022), Paper No. 112956, 39 pp. 

\bibitem{K2013} T. Kulczycki, {\em Gradient estimates of q-harmonic functions of fractional Schr{\"o}dinger operator}, Potential Anal. 39 (2013), no. 1, 69-98.
 
\end{thebibliography}
\end{document}